\theoremstyle{plain}
\newtheorem{main}{Theorem}
\newtheorem{maincor}[main]{Corollary}
\newtheorem{theorem}{Theorem}[section]
\newtheorem{lemma}[theorem]{Lemma}
\newtheorem{proposition}[theorem]{Proposition}
\newtheorem{corollary}[theorem]{Corollary}
\theoremstyle{remark}
\newtheorem{remark}[theorem]{Remark}
\newtheorem{definition}[theorem]{Definition}
\newcommand{\quand}{\quad\text{and}\quad}
\newcommand{\Leb}{\operatorname{vol}}
\newcommand{\C}{\operatorname{C}}
\newcommand{\Gibbs}{\operatorname{Gibbs}}
\newcommand{\Jac}{\operatorname{Jac}}
\newcommand{\diam}{\operatorname{diam}}
\newcommand{\G}{\operatorname{G}}
           \def\ea{\end{array}}
          \def\ec{\end{center}}
     \def\ed{\end{description}}
        \def\ee{\end{equation}}
       \def\eea{\end{eqnarray}}
     \def\eeaa{\end{eqnarray*}}
 \def\et{\end{thebibliography}}
\def\bib{\bibitem}
       \def\nt{\noindent}
\def\Diff{{\rm Diff}}
\def\Gibb{{\rm Gibb}}
\def\inv{{\rm inv}}
\def\supp{\operatorname{supp}}
\def\cG{{\mathcal G}}
\def\cA{{\mathcal A}}
\def\cC{{\mathcal C}}
\def\cI{{\mathcal I}}
\def\cU{{\mathcal U}}
\def\cV{{\mathcal V}}
\def\cR{{\mathcal R}}
\def\cB{{\mathcal B}}
\def\cF{{\mathcal F}}
\def\cM{{\mathcal M}}
\def\cR{{\mathcal R}}
\def\loc{\operatorname{loc}}
\def\vep{\varepsilon}
\def\TT{{\mathbb T}}
\title{Entropy along expanding foliations}
\author{Jiagang Yang}
\date{\today}
\thanks{J.Y. is partially supported by CNPq, FAPERJ, and PRONEX.}
\address{Department of Mathematics, Southern University of Science and Technology of China, 1088 Xueyuan Rd., Xili, Nanshan District, Shenzhen, Guangdong, China 518055}
\address{Departamento de Geometria, Instituto de Matem\'atica e Estat\'istica, Universidade
Federal Fluminense, Niter\'oi, Brazil}
\email{yangjg\@@impa.br}
\begin{document}

\begin{abstract}
The (measure-theoretical) entropy of a diffeomorphism along an expanding invariant foliation
is the rate of complexity generated by the diffeomorphism along the leaves of the foliation.
We prove that this number varies upper semi-continuously with the diffeomorphism ($\C^1$
topology), the invariant measure (weak* topology) and the foliation itself in a suitable
sense.

This has several important consequences. For one thing, it implies that the set of Gibbs
$u$-states of $\C^{1+}$ partially hyperbolic diffeomorphisms is an upper semi-continuous function
of the map in the $\C^1$ topology. Another consequence is that the sets of partially hyperbolic
diffeomorphisms with mostly contracting or mostly expanding center are $\C^1$ open. New examples of
partially hyperbolic diffeomorphisms with mostly expanding center are provided, and the existence of
physical measures for $C^1$ residual subset of diffeomorphisms are discussed.

We also provide a new class of robustly transitive diffeomorphisms:
every $C^2$ volume preserving, accessible partially hyperbolic diffeomorphism with one dimensional center
and non-vanishing center exponent is $C^1$ robustly transitive (among neighborhood of diffeomorphisms
which are not necessarily volume preserving).
\end{abstract}

\maketitle

\setcounter{tocdepth}{1} \tableofcontents

\section{Introduction}

A continuous foliation $\cF$ with smooth leaves is \emph{expanding for a diffeomorphism $f$}
if it is invariant under the iteration of $f$ and the derivative $Df$ restricted to the tangent bundle of
$\cF$ is uniformly expanding.

The simplest example of expanding foliation should be the unstable foliation.
As shown by Anosov \cite{A}, those foliations, while not smooth,
are still regular enough (absolute continuity).
The recent notable results on ergodicity of volume preserving partially hyperbolic,
see \cite{GPS,PSh97,PSh00,BW,RRU}, also depends on the analysis of the stable and unstable foliations.
More precisely, the julienne quasi-conformality property.

In general, given an expanding foliation, its tangent bundle may not correspond to the
strongest expansion, and an invariant transverse sub-bundle need not exist either. For instance,one can take the  weak-expanding foliation of a linear Anosov diffeomorphism.
Moreover, general expanding foliations are not
necessarily absolutely continuous (see for instance \cite{VY3}).

In this paper we will build the entropy theory on expanding foliations. The \emph{partial entropy} of an invariant
probability measure along an expanding foliation, whose precise definition will be given in Section~\ref{ss.partialentropy} (see also~\cite{LY85a, VY3}), is a value that quantifies the complexity of the measure generated
on this foliation.

Let $\mu$
be any invariant measure of $f$ and denote the \emph{partial entropy of $\mu$ along the foliation
$\cF$} by $h_\mu(f,\cF)$. Our first theorem shows that the partial entropy is upper semi-continuous:



\begin{main}\label{main.D}

Let $f_n$ be a sequence of $\C^1$ diffeomorphisms which converge to $f$ in the $\C^1$ topology,
and $\mu_n$ invariant measures of $f_n$ which converge to an invariant measure $\mu$ of $f$ in the
weak* topology. Suppose $\cF_n$ is an expanding foliation of $f_n$ for each $n$, with $\cF_n\to \cF$
(in the sense of Definition~\ref{d.convergenceoffoliaiton}), then
$$\limsup h_{\mu_n}(f_n,\cF^u_n)\leq h_\mu(f,\cF^u_f).$$
\end{main}

The research on the regularity of entropy has a long history, one can find more references from
\cite{Yom,New,LVY,VY2}. Our proof of Theorem~\ref{main.D} is inspired by the dimension theory of invariant measures (see
\cite{Y82,LY85b,BPS}), and the Pesin entropy formula (\cite{L84,LS82,LY85a}).
Similar result with Theorem \ref{main.D} can be found in \cite{HHW}.

The regularity of partial entropy has several important consequences, as we will explain in the following sections.

Several other applications of the present methods have been found in the meantime, some of which had not been foreseen. In a joint paper with
Tahzibi \cite{TY}, the regularity of partial entropy combined with other techniques is used to handle measures of partially hyperbolic
systems with large entropy. This is also a main ingredient in the joint papers with Liang, Marin \cite{LMY1,LMY2} and with Viana \cite{VY4},
where regularity of Lyapunov exponents of partially hyperbolic systems are analyzed. The notion of partial entropy for
expanding foliation is also used in the joint paper with Viana \cite{VY3} on center foliations, and with Saghin \cite{SY} on
the rigidity of Anosov systems.

\subsection{Partially hyperbolic diffeomorphisms and Gibbs $u$-states}

\emph{Partially hyperbolic} diffeomorphisms were proposed by Brin, Pesin~\cite{BP} and Pugh,
Shub~\cite{PS72} independently at the early 1970's, as an extension of the class of Anosov
diffeomorphisms \cite{A,AS}. A diffeomorphism $f$ is said to be  partially hyperbolic if there exists a
decomposition $TM = E^s \oplus E^c \oplus E^u$ of the tangent bundle $TM$ into three
continuous invariant sub-bundles $E^s_x$ and $E^c_x$ and $E^u_x$, such that $Df \mid E^s$ is
uniform contraction, $Df\mid E^u$ is uniform expansion and $Df \mid E^c$ lies in between:
$$
\frac{\|Df(x)v^s\|}{\|Df(x)v^c\|} \le \frac 12
\quand
\frac{\|Df(x)v^c\|}{\|Df(x)v^u\|} \le \frac 12
$$
for any unit vectors $v^s\in E^s$, $v^c\in E^c$, $v^u\in E^u$ and any $x\in M$.

Partially hyperbolic diffeomorphisms form an open subset of the space of $C^r$ diffeomorphisms
of $M$, for any $r\ge 1$. The \emph{stable sub-bundle} $E^s$ and the \emph{unstable sub-bundle}
$E^u$ are uniquely integrable, that is, there are unique foliations: the stable foliation $\cF^s$ and
the unstable foliation $\cF^u$,
whose leaves are smooth immersed sub-manifolds of $M$ tangent to $E^s$ and $E^u$, respectively,
at every point. The unstable and stable foliations are expanding foliations for the partially hyperbolic
diffeomorphism and its inverse respectively.

Following Pesin and Sinai~\cite{PS82} and Bonatti and Viana~\cite{BV00} (see also~\cite[Chapter 11]{BDVnonuni}),
we call \emph{Gibbs $u$-state} any invariant probability measure whose conditional probabilities
(Rokhlin~\cite{Rok49}) along strong unstable leaves are absolutely continuous with respect to the
Lebesgue measure on the leaves. In fact, assuming the derivative $Df$ is H\"older continuous, the
Gibbs-$u$ state always exists; furthermore, the densities with respect to Lebesgue measures along unstable
leaves are continuous due to distortion. Moreover, making use of uniform distortion, the densities vary continuously
with respect to the strong unstable leaves, and  and the diffeomorphisms under of $\C^{1+\vep}$ topology. As a consequence,
the space of Gibbs $u$-states, denoted by $\Gibb^u(\cdot)$, is compact relative to the weak-* topology
in the probability space, and varies upper semi-continuously with respect to the diffeomorphism in $\C^{1+\vep}$ topology
(\cite[Remark 11.15]{BDVnonuni}). In this article, we build a similar result in the $\C^1$ topology:

\begin{main}\label{main.A}

$\Gibb^u(\cdot)$ varies upper semi-continuously among the $\C^{1+}$ partially hyperbolic
diffeomorphisms in the $\C^1$ topology.

\end{main}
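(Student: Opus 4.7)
The plan is to deduce Theorem~\ref{main.A} from the announced upper semi-continuity of entropy along expanding foliations, via the Ledrappier-style variational characterization of Gibbs $u$-states. Suppose $f_n \to f$ in the $\C^1$ topology among $\C^{1+}$ partially hyperbolic diffeomorphisms, and let $\mu_n \in \Gibb^u(f_n)$ converge weak-$*$ to an $f$-invariant probability $\mu$. The goal is to show that $\mu \in \Gibb^u(f)$.

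The starting point is the Ledrappier-type characterization: for a $\C^{1+}$ partially hyperbolic diffeomorphism $g$, an invariant measure $\nu$ is a Gibbs $u$-state if and only if the entropy along the unstable foliation satisfies
\[
h_\nu(g, \cF^u_g) = \int \log |\det(Dg|E^u_g)| \, d\nu,
\]
the $\leq$ direction being a Ruelle-type inequality valid for every invariant measure. So it suffices to produce the reverse inequality for the limit $(f,\mu)$.

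From here I would argue in two steps. Step~1 (continuity of the right-hand side): since partial hyperbolicity is $\C^1$-open and the unstable bundle depends continuously on the map in the $\C^1$ topology, the functions $\log |\det(Df_n|E^u_{f_n})|$ converge uniformly to $\log |\det(Df|E^u_f)|$; combined with $\mu_n \to \mu$ weak-$*$, this yields
\[
\int \log |\det(Df_n|E^u_{f_n})| \, d\mu_n \longrightarrow \int \log |\det(Df|E^u_f)| \, d\mu.
\]
Step~2 (upper semi-continuity of the left-hand side): the unstable foliations $\cF^u_{f_n}$ converge to $\cF^u_f$ in the sense required by the main theorem on entropy along expanding foliations (a by-product of the stable manifold theorem for partially hyperbolic maps in families), so that theorem gives
\[
\limsup_n h_{\mu_n}(f_n, \cF^u_{f_n}) \leq h_\mu(f, \cF^u_f).
\]
Applying the Gibbs $u$ equality to each $\mu_n$ and passing to the limit then produces $h_\mu(f, \cF^u_f) \geq \int \log |\det(Df|E^u_f)| \, d\mu$, and the reverse inequality is automatic, so $\mu$ is a Gibbs $u$-state of $f$.

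The main obstacle is clearly Step~2, which is essentially the technical heart of the paper and presumably occupies the bulk of what follows. A subsidiary point to verify is that the convergence $\cF^u_{f_n} \to \cF^u_f$ coming from partial hyperbolicity theory genuinely fits the foliation-convergence hypothesis under which upper semi-continuity of partial entropy is established; and one must also confirm that the characterization in terms of the partial entropy along $\cF^u$ is available under the $\C^{1+}$ regularity assumed here, rather than the stronger $\C^{1+\vep}$ hypothesis of the classical statement.
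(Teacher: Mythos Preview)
Your proposal is correct and matches the paper's own proof essentially step for step: the paper reformulates Theorem~\ref{main.A} as the sequential statement you describe, invokes Proposition~\ref{p.Ruelle} (the Ledrappier characterization with the Ruelle-type inequality) for each $\mu_n$, uses uniform convergence of $\log\Jac_n$ plus weak-$*$ convergence for Step~1, and applies Theorem~\ref{main.D} together with the HPS convergence of unstable foliations for Step~2. The subsidiary points you flag (that $\cF^u_{f_n}\to\cF^u_f$ in the sense of Definition~\ref{d.convergenceoffoliaiton}, and that Proposition~\ref{p.Ruelle} holds under $\C^{1+}$) are exactly the ones the paper addresses, by citing \cite{HPS} and \cite{L84,BPS} respectively.
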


The difficulty of the proof comes to the fact that uniform distortion in general fails under $C^1$ topology, that is, for
two $C^{1+\varepsilon}$ diffeomorphisms $C^1$ close to each other, the H\"{o}lder norms of the tangent maps can be vastly different.

Let us explain briefly how we avoid using the uniform $C^{1+\alpha}$ topology, and how the problem is associated with the regularity of
partial entropy. We use an equivalent definition of Gibbs $u$-states given by \cite{Y82}--one that involves the Pesin formula for partial entropy. To be more precise, a measures is a Gibbs $u$-state if its partial entropy along the unstable foliation coincides with the sum of exponents along the unstable bundle. By
the upper semi-continuity of partial entropy,  the limit of Gibbs $u$-states must satisfy the same equality, and hence is a Gibbs $u$-state.

\subsection{Physical measures}
Let $f:M\to M$ be a diffeomorphism on some compact Riemannian manifold $M$.
An invariant  probability $\mu$ is a \emph{physical measure} for $f$ if the
set of  points $z\in M$ for which
\begin{equation}\label{eq.SRBmeasure}
\frac 1n \sum_{j=0}^{n-1} \delta_{f^i(z)} \to \mu
\quad \text{(in the weak$^*$ sense)}
\end{equation}
has positive volume. This set is denoted $B(\mu)$ and called the \emph{basin}
of $\mu$. A program for investigating the physical measures of partially hyperbolic
diffeomorphisms was initiated by Alves, Bonatti, Viana in \cite{ABV00,BV00},
who proved the existence and finiteness when $f$ is either ``mostly expanding''
(asymptotic forward expansion) or ``mostly contracting'' (asymptotic forward
contraction) along the center direction.

The set of Gibbs $u$-states plays important roles in the study of physical measures of
partially hyperbolic diffeomorphisms. The partially hyperbolic diffeomorphisms with
\emph{mostly contracting center}, first studied
in~\cite{BV00}, are those $\C^{1+}$ partially hyperbolic diffeomorphisms where all the Gibbs $u$-states have negative center Lyapunov exponents. As a corollary of the semi-continuation of the set of Gibbs $u$-states
in the $\C^{1+\vep}$ topology, the set of partially hyperbolic diffeomorphisms
forms a $\C^{1+\vep}$ open set (see~\cite{BDPP},~\cite{And10},~\cite{VY1},~\cite{DVY}).

The notion of partially hyperbolic diffeomorphisms with mostly expanding center was
provided by Alves, Bonatti and Viana (\cite{ABV00}). More recently, Andersson and V\'asquez
proved in \cite{AC} that a partially hyperbolic diffeomorphism such that every Gibbs $u$-state has positive center exponents has mostly expanding center. They also proposed to the latter, somewhat stronger,
property as the actual definition of having mostly expanding center. We will use this definition in the present paper, and  prove that the set of diffeomorphisms satisfying this
condition is open; this is not true for the original definition in \cite{ABV00}, as observed in
\cite[Proposition A]{AC}.

As a corollary of Theorem~\ref{main.A}, we obtain the $\C^1$ openness of the partially hyperbolic
diffeomorphisms with mostly contracting center or with mostly expanding center, thus give a positive answer to
a question of Dolgopyat \cite{D}.

\begin{main}\label{main.B}

The sets of partially hyperbolic diffeomorphisms with mostly contracting center or mostly expanding
center are $\C^1$ open, that is, every $\C^{1+}$ partially hyperbolic diffeomorphism with
mostly contracting (resp. expanding) center admits a $\C^1$ open neighborhood, such that
every $\C^{1+}$ diffeomorphism in this neighborhood has also mostly contracting (resp. expanding)
center.

\end{main}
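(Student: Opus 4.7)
The plan is to argue by contradiction, reducing Theorem~\ref{main.B} to Theorem~\ref{main.A} via the semi-continuity of center Lyapunov exponents. Suppose $f_0$ has mostly contracting center, but there is a sequence $f_n\to f_0$ in the $\C^1$ topology of $\C^{1+}$ partially hyperbolic diffeomorphisms, each admitting a Gibbs $u$-state $\mu_n$ whose largest center Lyapunov exponent $\lambda^c_+(\mu_n,f_n)$ is $\ge 0$. Since partial hyperbolicity is an open condition, the $f_n$ share uniform partial hyperbolicity constants with $f_0$. By compactness of probability measures on $M$, we pass to a subsequence and assume $\mu_n\to\mu$ weak-$*$. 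Theorem~\ref{main.A} then guarantees that $\mu\in\Gibb^u(f_0)$.

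The crux is the upper semi-continuity of $(f,\mu)\mapsto\lambda^c_+(\mu,f)$ in the joint $\C^1\times$weak-$*$ topology. Using subadditivity of the cocycle and Kingman's ergodic theorem,
\[
\lambda^c_+(\mu,f)=\lim_{n\to\infty}\frac{1}{n}\int_M \log\|Df^n|E^c_f\|\,d\mu = \inf_{n\ge 1}\frac{1}{n}\int_M \log\|Df^n|E^c_f\|\,d\mu.
\]
For each fixed $n$, the integrand $x\mapsto\log\|Df^n(x)|E^c_f(x)\|$ is continuous on $M$ and varies continuously in the uniform norm when $f$ varies in $\C^1$, because the center subbundle $E^c_f$ depends $\C^0$-continuously on $f$ in the $\C^1$ topology (a standard consequence of the stability of dominated splittings). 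Hence $(f,\mu)\mapsto\frac{1}{n}\int\log\|Df^n|E^c_f\|\,d\mu$ is continuous, and as the pointwise infimum of a family of continuous functions $\lambda^c_+$ is upper semi-continuous. In particular,
\[
\lambda^c_+(\mu,f_0)\;\ge\;\limsup_{n\to\infty}\lambda^c_+(\mu_n,f_n)\;\ge\;0,
\]
contradicting the assumption that every Gibbs $u$-state of $f_0$ has all center exponents negative.

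For the mostly expanding case I would repeat the argument with the smallest center exponent $\lambda^c_-(\mu,f) := -\lambda^c_+(\mu,f^{-1})$. Since $E^c_{f^{-1}}=E^c_f$ and $f\mapsto f^{-1}$ is a $\C^1$ homeomorphism, the preceding paragraph shows that $\lambda^c_-$ is \emph{lower} semi-continuous in $(f,\mu)$. Consequently, if a sequence $f_n\to f_0$ of $\C^{1+}$ partially hyperbolic diffeomorphisms failed to have mostly expanding center, one could extract Gibbs $u$-states $\mu_n$ with $\lambda^c_-(\mu_n,f_n)\le 0$; a weak-$*$ limit $\mu$ would then belong to $\Gibb^u(f_0)$ by Theorem~\ref{main.A} and satisfy $\lambda^c_-(\mu,f_0)\le 0$, contradicting the mostly expanding assumption. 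Thus the only substantive difficulty is packed into Theorem~\ref{main.A}; given that, Theorem~\ref{main.B} follows from the soft semi-continuity considerations above, the openness of partial hyperbolicity, and the $\C^0$-continuity of $E^c_f$ in $f$.
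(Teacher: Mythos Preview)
Your argument is correct and follows essentially the same route as the paper: both reduce Theorem~\ref{main.B} to Theorem~\ref{main.A} together with the continuity of the (center or center-stable) bundle under $\C^1$ perturbation and the resulting semi-continuity of the relevant integral. The paper packages the semi-continuity step slightly differently---it first extracts, via compactness of $\Gibb^u(f)$, a single $N$ and $a>0$ with $\int\log\|Df^N|_{E^{cs}}\|\,d\mu<-a$ for every $\mu\in\Gibb^u(f)$ (Proposition~\ref{p.mc}), then observes that this open condition persists for nearby $g$ once Theorem~\ref{main.A} pins down $\Gibb^u(g)$---whereas you phrase the same idea sequentially as upper semi-continuity of $\lambda^c_+=\inf_n\frac1n\int\log\|Df^n|_{E^c}\|\,d\mu$; these are two formulations of the same mechanism.
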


The only known example of diffeomorphism with mostly expanding center (in the stronger sense, as explained above) is due to Ma{\~{n}}{¡®{e}}~\cite{Ma}, see~\cite{ABV00} and~\cite[Section 6]{AC}.
As an application of Theorem~\ref{main.B}, we provide a whole new class of examples:

\begin{main}\label{main.C}

Let $f$ be a $C^{1+}$ volume preserving partially hyperbolic diffeomorphism
with one-dimensional center. Suppose the center exponent of the volume measure is
positive and $f$ is accessible, then $f$ admits an $\C^1$ open neighborhood (among diffeomorphisms
which are not necessarily volume preserving), such that every
$\C^{1+}$ diffeomorphism in this neighborhood has mostly expanding center
and admits a unique physical measure, whose basin has full volume.

\end{main}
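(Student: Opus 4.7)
The plan is to show that $f$ itself satisfies the hypothesis of Theorem~B, then apply Theorem~B to obtain the $\C^1$ openness, and finally deduce existence of a unique physical measure with full-volume basin throughout the resulting neighborhood.

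In case (a), partial hyperbolicity with one-dimensional center makes center bunching automatic, so by Burns--Wilkinson the volume $m$ is ergodic. Since $f$ preserves $m$, its conditionals along strong unstable plaques are leafwise Lebesgue, so $m$ is itself a Gibbs $u$-state of positive center exponent. I would then promote this to every Gibbs $u$-state by contradiction: an ergodic Gibbs $u$-state $\mu$ with strictly negative center exponent would be a hyperbolic SRB, hence a physical measure, but ergodicity of $m$ makes $B(m)$ have full volume, forcing $\mu=m$ and contradicting the sign of $\lambda^c(\mu)$. The remaining case $\lambda^c(\mu)=0$ is handled via the Avila--Viana invariance principle in one-dimensional center, which produces an invariant measurable disintegration of $\mu$ along center leaves that is compatible with both strong stable and strong unstable holonomies; accessibility of $f$ then propagates this structure throughout $M$ and again forces $\mu=m$.

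In case (b), disintegrate $m$ into ergodic components $m_\alpha$. Each $m_\alpha$ is a Gibbs $u$-state, so its support is $u$-saturated and, by minimality of $\cF^u$, equals $M$. Since $\int\log\|Df\mid E^c\|\,dm<0$, some ergodic component $m_0$ has strictly negative center exponent and full support. The conclusion that every Gibbs $u$-state of $f$ must then have negative center exponent follows from the Andersson-type argument used in the mostly-contracting literature cited via~\cite{And10}: the Pesin stable manifolds of $m_0$-generic points form a positive-volume, stably saturated set, minimality of $\cF^u$ propagates this saturation, and any alternative ergodic Gibbs $u$-state with non-negative center exponent would produce a disjoint positive-volume basin, violating the Hopf-type ergodic consequence of $m_0$ along overlapping unstable plaques.

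Having placed $f$ in the mostly expanding class in case (a) or mostly contracting class in case (b), Theorem~B supplies a $\C^1$ open neighborhood $\cU$ of $f$ such that every $\C^{1+}$ diffeomorphism $g\in\cU$ inherits the same property. The structural results cited in Section~1.2, namely \cite{BV00,And10,VY1} for mostly contracting and~\cite{AC} for mostly expanding, then guarantee that the physical measures of $g$ exist, coincide with its ergodic Gibbs $u$-states, and attract a set of full volume. Uniqueness for $g$ is forced by a robust irreducibility: accessibility is $\C^1$-open in the one-dimensional center setting (Didier), and in case (b) an increase of the number of ergodic Gibbs $u$-states at some $g\in\cU$ would, via Theorem~A, produce a limit Gibbs $u$-state of $f$ distinct from its unique one. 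The hardest step is precisely this last one: $g$ no longer preserves volume in case (a) and a priori need not have minimal unstable foliation in case (b), so propagating irreducibility to the perturbed dynamics must be squeezed out of Theorems~A and~B rather than re-running the construction that gave uniqueness for $f$.
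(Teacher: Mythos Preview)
Your outline for showing that $f$ itself is mostly expanding/contracting is close to the paper's, with one substantive difference: in case (a) with $\lambda^c(\mu)=0$, the paper does \emph{not} invoke the invariance principle. Instead it uses an entropy argument: since $\lambda^c(\mu)=0$, Proposition~\ref{p.vanishingcenterexponents} gives $h_\mu(f)=h_\mu(f,\cF^u)=\Lambda^u$, and because $f$ preserves volume one has $\Lambda^u+\Lambda^s=0$, hence $h_\mu(f^{-1},\cF^s)=h_\mu(f)=-\Lambda^s$. By Proposition~\ref{p.Ruelle} applied to $f^{-1}$, $\mu$ is also a Gibbs $s$-state, so $\supp(\mu)$ is $su$-saturated and, by accessibility, equals $M$. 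A Pesin-stable-manifold argument then forces $\mu=\Leb$, contradicting $\lambda^c(\mu)=0$. Your invariance-principle sketch may be workable, but as written it does not explain how holonomy-invariance of a disintegration forces $\mu=m$; the paper's route avoids this issue entirely.

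The real gap is in your treatment of uniqueness for the \emph{perturbed} map $g$. In case (a), the openness of accessibility (Didier) does not help: $g$ is no longer volume preserving, so neither Burns--Wilkinson nor the argument you ran for $f$ applies to $g$. The paper handles this via Proposition~\ref{p.culargebasin}, which provides a uniform $\delta>0$ such that every physical measure of every nearby $g$ has a basin containing Lebesgue-a.e.\ point of some $\delta$-ball; combined with the robust $\delta$-transitivity inherited from transitivity of $f$, two distinct basins would be forced to overlap. In case (b), your claim that ``an increase of the number of ergodic Gibbs $u$-states at $g$ would, via Theorem~A, produce a limit Gibbs $u$-state of $f$ distinct from its unique one'' is not valid: Theorem~A gives upper semi-continuity of the \emph{set} $\Gibb^u(\cdot)$, but two distinct ergodic Gibbs $u$-states of $g_n$ can perfectly well converge to the \emph{same} Gibbs $u$-state of $f$. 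The paper instead invokes the skeleton machinery of \cite{DVY} (Proposition~\ref{p.uppersemicontinuous}) to get genuine upper semi-continuity of the \emph{number} of physical measures. You correctly flagged this step as the hardest, but you have not actually supplied a mechanism that replaces these two ingredients.
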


Theorem~\ref{main.C} contains abundance of systems: by Avila~\cite{AA}, $\C^\infty$
volume preserving diffeomorphisms are $\C^1$ dense, and by Baraviera and Bonatti~\cite{BB},
the volume preserving partially hyperbolic diffeomorphisms with one-dimensional center
and non-vanishing center exponent are $\C^1$ open and dense. Moreover, the subset
of accessible systems is $\C^1$ open and $\C^k$ dense for any $k\geq 1$ among all
partially hyperbolic diffeomorphisms with one-dimensional center direction (Burns,
Rodriguez Hertz, Rodriguez Hertz, Talitskaya and Ures~\cite{BHHTU}; see also Theorem
1.5 in Ni{\c{t}}ic{\u{a}} and T{\"o}r{\"o}k~\cite{NT}).

One can also show that in the neighborhood described in Theorem~\ref{main.C}, typical $C^1$ diffeomorphisms also
admit physical measures:

\begin{maincor}\label{maincor.generic}

Let $f$ be a $C^{1+}$ volume preserving partially hyperbolic diffeomorphism
with one-dimensional center. Suppose the center exponent of the volume measure is
positive and $f$ is accessible. Then $f$ admits an $\C^1$ open neighborhood $\cU$ (among diffeomorphisms
which are not necessarily volume preserving),
such that every $C^1$ diffeomorphism $g$ in a residual subset $\cR$ of $\cU$ has a unique physical measure, whose basin has full volume.

\end{maincor}

The existence of physical measures for $C^1$ residual subset of maps were mainly studied for uniformly circle expanding maps
in \cite{CQ}, and for hyperbolic attractor in \cite{Q}. Note that in Corollary~\ref{maincor.generic}, no uniform hyperbolicity is assumed.

In the proof of Corollary~\ref{maincor.generic}, we make use of new classification/candidates of physical measures for $C^1$
partially hyperbolic diffeomorphisms (\cite{CYZ,HYY}), namely the set of invariant probabilities that satisfy the Pesin formula involving partial entropy. By the semi-continuity of partial entropy, we show that the class
of measures varies upper semi-continuously with respect to the maps; as a result, if one denote by $\cG$ the map that sends a diffeomorphism to the set of  candidate measures, then the continuous point of $\cG$ contains a $C^1$ residual subset $\cR$ of $\cU$.
Moreover, we show that for $C^{1+}$
diffeomorphisms in this neighborhood, the image under $\cG$ consists a unique element, and thus, by continuity, every $g\in \cU$ has a unique physical measure.

\subsection{Robustly transitive diffeomorphisms}
The first example of $C^1$-persistent,
non-hyperbolic, transitive (i.e. having a dense orbit in the whole manifold) diffeomorphisms was described
by M. Shub on the 4-dimensional torus $\TT^4=\TT^2 \times \TT^2$ as a skew product of an Anosov diffeomorphism
by ones that are  derived from an Anosov. Later, R. Ma\~n\'e \cite{Ma} proved that some derived-from-Anosov diffeomorphisms
on $\TT^3$ are $C^1$-persistently transitive (for a different proof, see \cite{PS}). Both examples depend on the manifold and topological structure of the maps.

In the remarkable paper \cite{BD}, Bonatti and D\'{i}az make use of 'blender'--a semi-local model, to provide
a much broader class of nonhyperbolic and transitive maps. In their construction, some perturbation (to obtain
blenders) is necessary, and one also need some global assumption on the invariant manifolds.
More precisely, the blender is a robust structure that can induce expanding property of the center bundle on some local unstable disks;
in order to extend the expansion from the blender to every unstable leaves, in general one need some extra hypothesis on the invariant manifold to guarantee that the iteration of invariant leaves will cross the blender .

In this paper, we provide a new measure-theoretical way to prove transitivity without using perturbation:

\begin{main}\label{main.transitive}
Let $f$ be a $C^2$ volume preserving, partially hyperbolic diffeomorphism with one dimensional center.
Suppose $f$ is accessible and the center exponent is not vanishing, then $f$ is $C^1$ robustly transitive, i.e.,
every $C^1$ diffeomorphism $g$ (not necessarily volume preserving) in a $C^1$ neighborhood of $f$ is transitive.
\end{main}

To show the transitively of $g$, we consider a special class of invariant measures $\G(\cdot)$ (see Definition~\ref{df.G}).
For a diffeomorphism $f$, the measures in $\G(f)$ have positive center exponent means that we have expansion along the center direction;
thus for point in a full volume subset, any of its neighborhood has uniform size
along the center-unstable direction after some iteration. By the analysis of $\G(f^{-1})$ (Lemma~\ref{l.Ginverse}), one
can show that for Lebesgue
almost every point, its negative orbit is almost dense, this implies the transitivity. Moreover, since $\G(\cdot)$ is defined using partial entropy, the
regularity of partial entropy enables us to prove that the two phenomenons above are robust.

Let us observe that although for volume preserving partially hyperbolic diffeomorphism, accessibility implies transitivity,
this is not true for dissipative partially hyperbolic diffeomorphisms, see the recent work of Y. Shi \cite{Sh}.
Y. Shi's work also shows that the assumption of non-vanishing exponent is necessary. One may further ask if every volume preserving, partially
hyperbolic diffeomorphism with one dimensional center and vanishing center exponent is always not robustly transitive.

\nt{\bf Outline of the work}.
In section 2 we give the necessary background which will be used throughout
the text. In Section 3 we build a sequence of measurable
partitions, which is used in Section 4 to prove Theorem \ref{main.D}. Section 5 is devoted to
the proofs of Theorems \ref{main.A} and \ref{main.B}. The proof of Theorem \ref{main.C} is divided into
Sections 6 and 7, which also contains the proof of  Corollary~\ref{maincor.generic}. And we give the proof of
Theorem~\ref{main.transitive} in Section~\ref{s.transitive}.

\section{Preliminary}

Throughout this section, let $f$ be a diffeomorphism on the manifold $M$, and $\mu$ be an invariant probability
measure of $f$.
\subsection{Volume preserving partially hyperbolic diffeomorphism}
We say a partially hyperbolic diffeomorphism is \emph{accessible} if any two points can be
joined by a piecewise smooth curve such that each leg is tangent to either $E^u$ or $E^s$ at every point.

Pugh, Shub conjectured in \cite{PSh97} that (essential) accessibility implies ergodicity, for a $\C^2$
partially hyperbolic, volume preserving diffeomorphism. In \cite{PSh00} they showed that this does hold
under a few additional assumptions. The following result is a special case of a general
result of Burns, Wilkinson~\cite{BW}:

\begin{proposition}\label{p.volumeergodic}

Every $\C^{1+\vep}$ volume preserving, accessible partially hyperbolic diffeomorphism with one-dimensional
center is ergodic.

\end{proposition}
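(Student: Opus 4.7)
The plan is to derive this proposition as a direct application of the Burns--Wilkinson ergodicity theorem, the key observation being that one-dimensional center automatically implies the center bunching hypothesis needed in their framework. So my ``proof'' is really a hypothesis-verification argument rather than a fresh construction.

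First I would recall the statement of the Burns--Wilkinson theorem: every $\C^{1+\vep}$ volume preserving, essentially accessible, center bunched partially hyperbolic diffeomorphism is ergodic (and in fact has the Kolmogorov property). Accessibility trivially implies essential accessibility, so the only nontrivial hypothesis to check is center bunching. Recall that center bunching asks that the narrowest expansion/contraction rates in $E^s$ and $E^u$ dominate the nonconformality of $Df|E^c$; quantitatively one requires
\[
\|Df(x)|E^s\|\cdot \bigl\|Df(x)|E^c\bigr\|\cdot \bigl\|Df(x)^{-1}|E^c\bigr\|<1
\]
and the analogous inequality with $E^u$ in place of $E^s$ and $Df^{-1}$ in place of $Df$.

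The main (and really the only) step is to observe that when $\dim E^c=1$, for every $x$ the operator $Df(x)|E^c$ is multiplication by a single scalar $\lambda_c(x)$, so $\|Df(x)|E^c\|\cdot \|Df(x)^{-1}|E^c\|=|\lambda_c(x)|\cdot |\lambda_c(x)|^{-1}=1$. Partial hyperbolicity then gives $\|Df(x)|E^s\|<\tfrac12$ and $\|Df(x)^{-1}|E^u\|<\tfrac12$, so both center bunching inequalities hold with room to spare, uniformly in $x$.

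With center bunching in hand, Burns--Wilkinson applies and yields ergodicity of the volume measure, finishing the proof. The potential obstacle I anticipated, verifying the technical bunching inequality in a general form, disappears entirely thanks to the one-dimensional assumption; this is in fact the standard reason why most ergodicity results in the pointwise partially hyperbolic setting are cleanest for one-dimensional center.
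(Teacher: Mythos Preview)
Your proposal is correct and matches the paper's approach: the paper states this proposition without proof, simply presenting it as a special case of the Burns--Wilkinson theorem~\cite{BW}, and your argument supplies exactly the standard verification that one-dimensional center forces center bunching so that their theorem applies. One minor quibble: the bound $\|Df(x)|E^s\|<\tfrac12$ is not quite what the paper's definition of partial hyperbolicity says (the $\tfrac12$ there is a domination ratio, not the contraction rate), but all you actually need is $\|Df(x)|E^s\|<1$ from uniform contraction, which the paper does assume.
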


\subsection{Continuation of foliation}

In this subsection we explain the convergence between foliations that is used in
Theorem~\ref{main.D}.

Let $\cF$ be a foliation of $M$ with dimension $l$, that is,
every leaf is a $l$-dimensional smooth immersed submanifold. An \emph{$\cF$-foliation
box} is the image $B$ of a topological embedding $\Phi : D^{d-l}\times D^{l}\to M$
such that every plaque $P_x = \Phi(\{x\}\times D^l)$ is contained in a leaf of $\cF$,
and every
$$\Phi(x,\cdot) : D^l \to M, y \mapsto \Phi(x,y)$$
is a $\C^1$ embedding depending continuously on $x$ in the $\C^1$ topology.
We write $D=\Phi(D^{d-l}\times \{0\})$, and denote this foliation
box by ($B, \Phi, D$).

Take a finite cover of $M$ consists of $\cF$-foliation boxes $\{(B_i,\Phi_i,D_i)\}_{i=1}^k$.
\begin{definition}\label{d.convergenceoffoliaiton}

We say \emph{a sequence of $l$-dimensional foliations $\cF_n$ converge to $\cF$} if:
\begin{itemize}
\item for each $n$, there exists a finite cover of $M$ by $\cF_n$-foliation boxes
      $$\{B^n_i,\Phi^n_i,D_i\}_{i=1}^k;$$
\item for each $1\leq i \leq k$, the topological embeddings $\Phi^n_i: D^{d-l}\times D^{l}\to M$
      converge uniformly to $\Phi_i$ in the $\C^0$ topology;
\item for every $x\in D_i$ ($1\leq i\leq k$), $\Phi^n(x, \cdot) : D^l \to M$ defined by
      $$y \mapsto \Phi^n(x,y)$$
      is a $\C^1$ embedding which converges to $\Phi(x,.)$ in the $\C^1$ topology as $n\to \infty$.
\end{itemize}
\end{definition}

\subsection{Measurable partitions and mean conditional entropy}
Let $\cB$ be the Borel $\sigma$-algebra on $M$ and $\mu$ a probability of $M$. In this subsection, we recall
the properties of measurable partitions, for more details see~\cite{Rok49, Rok67}.

\begin{definition}\label{d.measurablepartition}

A partition $\xi$ of $M$ is called \emph{measurable} if there is a sequence of finite partitions
$\xi_n$ $_{n\in \mathbb{N}}$ such that:
\begin{itemize}
\item elements of $\xi_n$ are measurable (up to $\mu$-measure $0$);

\item $\xi=\vee_{n}\xi_n$, that is, $\xi$ is the coarsest partition which refines $\xi_n$ for each $n$.

\end{itemize}
\end{definition}

For a partition $\xi$ and $x\in M$, we denote by $\xi(x)$ the element of $\xi$ which contains $x$.
For any measurable partition, we may define conditional measures of $\mu$ on almost every element.

\begin{proposition}\label{p.conditionalmeasure}
Let $\xi$ be a measurable partition. Then there is a full $\mu$-measure subset $\Gamma$
such that for every $x\in \Gamma$, there is a probability measure $\mu_x^{\xi}$ defined
on $\xi(x)$ satisfying:
\begin{itemize}
\item Let $\cB_\xi$ be the sub-$\sigma$-algebra of $\cB$ which consist unions of
      elements of $\xi$, then for any measurable set $A$, the function $x\to \mu^\xi_x(A)$ is
      $\cB_\xi$-measurable.

\item Moreover, we have
\begin{equation}\label{eq.conditional}
\mu(A)=\int \mu^\xi_x(A)d\mu(x).
\end{equation}
\end{itemize}
\end{proposition}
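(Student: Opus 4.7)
The plan is to exploit the defining sequence of finite partitions $\xi_n$ with $\xi=\bigvee_n \xi_n$ and build $\mu^\xi_x$ as a limit of the elementary conditional measures attached to the $\xi_n$. For each $\xi_n$, on the full-measure set where $\mu(\xi_n(x))>0$, one sets
\[
\mu^{\xi_n}_x(A) \;=\; \frac{\mu(A\cap \xi_n(x))}{\mu(\xi_n(x))}.
\]
These satisfy $\mu(A)=\int \mu^{\xi_n}_x(A)\,d\mu(x)$ trivially, and the filtration $\cB_{\xi_1}\subset \cB_{\xi_2}\subset\cdots\subset \cB_\xi$ grows to $\cB_\xi$. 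The strategy is to pass to the limit $n\to\infty$ in $\mu^{\xi_n}_x$, obtain $\mu^\xi_x$ by Carath\'eodory extension, and inherit the desired properties by monotone/martingale convergence.

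To control the countably many null sets, I first fix a countable algebra $\cA_0\subset \cB$ generating $\cB$, available because $M$ is a compact manifold (hence its Borel $\sigma$-algebra is countably generated). For each fixed $A\in\cA_0$ the sequence $x\mapsto \mu^{\xi_n}_x(A)$ is a bounded martingale relative to $\{\cB_{\xi_n}\}$, so by Doob's theorem it converges $\mu$-almost everywhere to a $\cB_\xi$-measurable limit $\phi_A(x)$. Intersecting the countably many exceptional null sets as $A$ ranges over $\cA_0$ yields a single full-measure set $\Gamma$ on which the map $A\mapsto \phi_A(x)$ is finitely additive and normalized on $\cA_0$; inner regularity of Borel measures on the compact metric space $M$ then supplies $\sigma$-additivity on a compact class, and Carath\'eodory extension produces the probability $\mu^\xi_x$ on all of $\cB$. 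The measurability statement follows because each $\phi_A$ is a pointwise limit of $\cB_{\xi_n}$-measurable functions, and the identity $\mu(A)=\int \mu^\xi_x(A)\,d\mu(x)$ passes from $\cA_0$ to $\cB$ by the monotone class theorem after taking $n\to\infty$ in the $\xi_n$-level identity.

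The main obstacle is that the conclusion implicitly requires $\mu^\xi_x$ to be concentrated on $\xi(x)$, even though $\xi(x)$ need not be a Borel set of positive measure. I would handle this using a countable family of $\xi$-saturated measurable sets $\{S_k\}$ separating atoms of $\xi$, obtained as finite unions of atoms of the $\xi_n$. For each such $S_k\in \cB_\xi$, the relation $\mu^{\xi_n}_x(S_k)=\mathbf{1}_{S_k}(x)$ holds $\mu$-a.e.\ as soon as $S_k$ is $\xi_n$-measurable, so in the limit $\mu^\xi_x(S_k)=\mathbf{1}_{S_k}(x)$ on a further full-measure subset. Refining $\Gamma$ by removing the countable union of these null sets, the separation property forces $\supp \mu^\xi_x \subset \xi(x)$ for every $x\in\Gamma$, which is the delicate part of the construction and completes the proof.
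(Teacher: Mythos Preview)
The paper does not supply its own proof of this proposition: it is stated as a classical result of Rokhlin, with the reader referred to \cite{Rok49,Rok67} for details. So there is no ``paper's proof'' to compare against.

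Your sketch is essentially the classical martingale route to Rokhlin's disintegration theorem and is sound as a plan. One step that deserves a bit more care is the passage from the finitely additive limit $A\mapsto \phi_A(x)$ on the generating algebra $\cA_0$ to a genuine countably additive measure. Invoking ``inner regularity'' alone is not quite enough: you need that $\cA_0$ can be chosen so that each $A\in\cA_0$ is approximated from within by a compact class (e.g.\ take $\cA_0$ to be the algebra generated by a countable basis of open sets, or simply use that $M$, being a compact metric space, is a standard Borel space, which is the hypothesis under which regular conditional probabilities are guaranteed to exist). With that in place, the Carath\'eodory extension goes through $\mu$-almost everywhere, and the rest of your argument---the separation of atoms by countably many $\xi$-saturated sets to force $\mu^\xi_x(\xi(x))=1$---is exactly the right way to finish.
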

\begin{remark}
Let $\pi_\xi$ be the projection $M\to M/\xi$, and $\mu_\xi$ be the projection of measure $\mu$ onto
$M/\xi$ by the map $\pi_\xi$. Then equation~\eqref{eq.conditional} can be written as:
\begin{equation}\label{eq.projective}
\mu(A)=\int \mu^\xi_B(A)d\mu_\xi(B)
\end{equation}
where $B$ denotes the element of $\xi$ and $\mu^\xi_B$ the conditional measure on $B$.
\end{remark}

Let $\xi$ be a measurable partition and $C_1,C_2,\dots$ be the elements of $\xi$ of positive measure.
We define the \emph{entropy of the partition} by
\begin{equation}
H_\mu(\xi) = \begin{cases} \sum_k \phi(\mu(C_k)), & \mbox{if } \mu(M\setminus \cup_k C_k)=0 \\
\infty, & \mbox{if } \mu(M\setminus \cup_k C_k)>0 \end{cases}
\end{equation}
where $\phi: \mathbb{R}^+\to \mathbb{R}$ is defined by $\phi(x)=-x\log x$.

If $\xi$ and $\eta$ are two measurable partitions, then for every element $B$ of $\eta$, $\xi$
induces a partition $\xi_B$ on $B$. We define the \emph{mean conditional
entropy of $\xi$ respect to $\eta$}, denoted by $H(\xi\mid\eta)$, as the following:
\begin{equation}\label{eq.conditionalentropy}
H_\mu(\xi\mid\eta)=\int_{M/\eta} H_{\mu^\eta_B}(\xi_B) d\mu_\eta(B).
\end{equation}
\begin{definition}
For measurable partitions $\{\zeta_{n}\}_{n=1}^\infty$ and $\zeta$, we write $\zeta_n\nearrow \zeta$ if
the following conditions are satisfied:
\begin{itemize}
\item $\zeta_1<\zeta_2<\dots$;
\item $\vee_{n=1}^\infty \zeta_n=\zeta$.
\end{itemize}
\end{definition}

\begin{lemma}\label{l.increasingpartition}[\cite[Subsection 5.11]{Rok67}]
Suppose $\{\eta_n\}_{n=1}^\infty$, $\eta$ and $\xi$ are measurable partitions, such that $\eta_n\nearrow \eta$
and $H_\mu(\xi\mid \eta_1)<\infty$, then
$$H_\mu(\xi\mid \eta_n)\searrow H_\mu(\xi \mid \eta).$$
\end{lemma}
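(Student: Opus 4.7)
The argument splits into two pieces: the easy monotonicity, and the reverse inequality obtained by finite approximation. Since $\eta_n < \eta_{n+1} < \eta$, conditioning on a finer partition decreases entropy, so $H_\mu(\xi \mid \eta_{n+1}) \leq H_\mu(\xi \mid \eta_n)$ and $H_\mu(\xi \mid \eta) \leq H_\mu(\xi \mid \eta_n)$ for every $n$. Consequently $L := \lim_n H_\mu(\xi \mid \eta_n)$ exists and satisfies $L \geq H_\mu(\xi \mid \eta)$; the remaining task is to prove the reverse inequality $L \leq H_\mu(\xi \mid \eta)$.

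\emph{Step 1 (finite $\xi$).} Using the integrated form of the mean conditional entropy,
$$H_\mu(\xi \mid \eta_n) = \sum_{C \in \xi}\int \phi\bigl(\mu_x^{\eta_n}(C)\bigr)\,d\mu(x), \qquad \phi(t) = -t\log t,$$
I would note that for each atom $C$ of $\xi$ the function $g_n^C(x) := \mu_x^{\eta_n}(C)$ is the conditional expectation of $\mathbf{1}_C$ with respect to $\cB_{\eta_n}$. Because the $\sigma$-algebras $\cB_{\eta_n}$ increase to $\cB_\eta$, L\'evy's upward martingale convergence theorem gives $g_n^C(x)\to g_\infty^C(x):=\mu_x^\eta(C)$ for $\mu$-a.e.\ $x$. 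Since $\phi$ is continuous and bounded by $1/e$ on $[0,1]$, bounded convergence yields $\int \phi(g_n^C)\,d\mu \to \int \phi(g_\infty^C)\,d\mu$, and summing over the finitely many atoms closes the finite case.

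\emph{Step 2 (general $\xi$).} I would pick finite partitions $\xi^{(k)}\nearrow \xi$ (obtained e.g.\ by collapsing all but the first $k$ atoms of $\xi$). The chain rule for conditional entropy decomposes
$$H_\mu(\xi \mid \eta_n) = H_\mu(\xi^{(k)} \mid \eta_n) + H_\mu(\xi \mid \xi^{(k)} \vee \eta_n),$$
and $\eta_1 < \eta_n$ together with the chain rule again bounds the remainder by $H_\mu(\xi \mid \xi^{(k)} \vee \eta_1) = H_\mu(\xi \mid \eta_1) - H_\mu(\xi^{(k)} \mid \eta_1)$. The hypothesis $H_\mu(\xi \mid \eta_1) < \infty$ combined with the standard fact that $H_\mu(\xi^{(k)} \mid \eta_1) \nearrow H_\mu(\xi \mid \eta_1)$ as $k\to\infty$ (apply monotone convergence to the non-decreasing fiberwise integrand $H_{\mu_B^{\eta_1}}(\xi^{(k)}_B)$, which is dominated by the integrable $H_{\mu_B^{\eta_1}}(\xi_B)$) makes this remainder arbitrarily small, uniformly in $n$. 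So for any $\varepsilon>0$ I fix $k$ with the remainder $<\varepsilon$, let $n\to\infty$ using Step 1 on the first summand, and finally let $\varepsilon\to 0$, obtaining $L \leq H_\mu(\xi \mid \eta)$.

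\emph{Main obstacle.} Step 1 is a routine application of martingale convergence plus bounded convergence. The delicate point is Step 2: the finiteness assumption $H_\mu(\xi \mid \eta_1)<\infty$ is precisely what licenses truncating the countable partition $\xi$ to a finite one with a tail error that is uniform in $n$; without it the chain-rule remainder cannot be controlled and the argument breaks down.
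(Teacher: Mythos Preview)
The paper does not prove this lemma at all: it simply quotes it from Rokhlin~\cite[Subsection~5.11]{Rok67}. Your argument is the standard one (martingale convergence for the finite case, chain-rule truncation for the general case) and is essentially correct.

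One small caveat: your parenthetical ``collapsing all but the first $k$ atoms of $\xi$'' tacitly assumes $\xi$ has only countably many atoms, which need not hold for a general measurable partition in Rokhlin's sense (e.g.\ the partition into points). The clean fix is already implicit in Definition~\ref{d.measurablepartition}: take $\xi^{(k)}$ to be the finite partitions with $\vee_k \xi^{(k)}=\xi$ guaranteed by measurability. The fiberwise monotone convergence $H_{\mu_B^{\eta_1}}(\xi^{(k)}_B)\nearrow H_{\mu_B^{\eta_1}}(\xi_B)$ then follows from $-\log \mu_B^{\eta_1}(\xi^{(k)}(x))\nearrow -\log \mu_B^{\eta_1}(\xi(x))$ and monotone convergence, and the rest of your Step~2 goes through unchanged.
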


\begin{definition}
Let $\xi$ be a measurable partition, we put
$$h_\mu(f,\xi)=H_\mu(\xi\mid f\xi^+),$$
where $\xi^+=\vee_{n=0}^\infty f^n \xi$.
\end{definition}

\begin{remark}\label{r.increasingpartition}
A measurable partition $\xi$ is said to be \emph{increasing} if $f\xi <\xi$. For
an increasing partition $\xi$,
$$h_\mu(f,\xi)=H_\mu(\xi\mid f\xi).$$
\end{remark}

\subsection{Expanding foliations\label{ss.partialentropy}}

Throughout this subsection, $\cF$ denotes an expanding foliation of $f$. We are going to give
the precise definition of the partially metric entropy of $\mu$ along the expanding foliation
$\cF$, which depends on a special class of measurable partitions:

\begin{definition}
We say a measurable partition $\xi$ of $M$ is \emph{$\mu$-subordinate} to the $\cF$-foliation if
for $\mu$-a.e. $x$, we have
\begin{itemize}
\item[(1)] $\xi(x)\subset \cF(x)$ and $\xi(x)$ has uniformly small diameter inside $\cF(x)$;
\item[(2)] $\xi(x)$ contains an open neighborhood of $x$ inside the leaf $\cF(x)$;
\item[(3)] $\xi$ is an increasing partition, meaning that $f\xi \prec \xi$.
\end{itemize}

\end{definition}

Ledrappier, Strelcyn~\cite{LS82} proved that the Pesin unstable lamination admits some $\mu$-subordinate
measurable partition, which can also be applied on a general expanding foliation. In the proof of Theorem~\ref{main.D} we will need a uniform construction of these partitions for a sequence of diffeomorphisms and measures, which will be provided in Section~\ref{s.construction}.

The following result (for the subordinate partitions constructed as in Section~\ref{s.construction})
is contained in Lemma~3.1.2 of Ledrappier, Young~\cite{LY85a}:

\begin{lemma}\label{l.definitionleafentropy}
Given any expanding foliation $\cF$, we have $h_\mu(f,\xi_1)=h_\mu(f,\xi_2)$
for any measurable partitions $\xi_1$ and $\xi_2$ that are $\mu$-subordinate to $\cF$.
\end{lemma}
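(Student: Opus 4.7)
The plan is to follow the standard Ledrappier--Young strategy: produce an identity by computing a single conditional entropy in two different ways, then cancel a common term.

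\emph{Reduction to the comparable case.} Given $\mu$-subordinate $\xi_1$ and $\xi_2$, their common refinement $\xi := \xi_1 \vee \xi_2$ is again $\mu$-subordinate: each atom lies inside a single leaf of $\cF$ (being the intersection of two atoms contained in the same leaf), its diameter in the leaf is bounded by those of the $\xi_i$, and the join of two increasing partitions is increasing. So it suffices to prove $h_\mu(f,\eta)=h_\mu(f,\xi)$ whenever $\xi$ and $\eta$ are $\mu$-subordinate with $\xi$ refining $\eta$; applying this with $\eta=\xi_i$ and $\xi=\xi_1\vee\xi_2$ for $i=1,2$ yields the lemma.

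\emph{Computing $H_\mu(\xi\mid f\eta)$ two ways.} Assume $\xi$ refines $\eta$, both increasing. The Rokhlin chain rule reads $H_\mu(\alpha\vee\beta\mid\gamma)=H_\mu(\alpha\mid\gamma)+H_\mu(\beta\mid\alpha\vee\gamma)$. Since $\eta$ is increasing (so $\eta\vee f\eta=\eta$) and $\xi$ refines $\eta$ (so $\xi\vee\eta=\xi$),
\[
H_\mu(\xi\mid f\eta)=H_\mu(\xi\vee\eta\mid f\eta)=H_\mu(\eta\mid f\eta)+H_\mu(\xi\mid\eta).
\]
On the other hand, $\xi$ is increasing (so $\xi\vee f\xi=\xi$) and $f\xi$ refines $f\eta$ (since $\xi$ refines $\eta$), giving
\[
H_\mu(\xi\mid f\eta)=H_\mu(\xi\vee f\xi\mid f\eta)=H_\mu(f\xi\mid f\eta)+H_\mu(\xi\mid f\xi).
\]
By $f$-invariance of $\mu$, $H_\mu(f\xi\mid f\eta)=H_\mu(\xi\mid\eta)$. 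Equating the two expressions and canceling $H_\mu(\xi\mid\eta)$ gives $h_\mu(f,\eta)=h_\mu(f,\xi)$.

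\emph{Finiteness.} The cancellation above is legitimate only if $H_\mu(\xi\mid\eta)<\infty$, and this I regard as the main obstacle: subordinate partitions have uncountably many atoms in general, and a priori nothing forces the partition that $\xi$ induces on a typical atom $\eta(x)$ to have finite entropy. I expect this to be handled by exploiting the specific construction of subordinate partitions in Section~\ref{s.construction}: if both subordinate partitions are built from plaques of uniformly comparable size inside foliation boxes, then each atom $\eta(x)$ meets only boundedly many atoms of $\xi$, so $H_\mu(\xi\mid\eta)$ is not just finite but uniformly bounded. Should a uniform bound prove inconvenient, one can approximate $\eta$ by a $\nearrow$-sequence of coarser measurable partitions $\eta_n$ with $H_\mu(\xi\mid\eta_1)<\infty$ built in, apply the two-way identity with $\eta_n$ in place of $\eta$, and pass to the limit using Lemma~\ref{l.increasingpartition}.
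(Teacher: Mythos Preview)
The paper does not supply its own proof but simply cites Ledrappier--Young \cite[Lemma~3.1.2]{LY85a}, and your argument is exactly that proof: reduce to the refining case via $\xi_1\vee\xi_2$, then compute $H_\mu(\xi\mid f\eta)$ two ways and cancel $H_\mu(\xi\mid\eta)$. Your identification of the finiteness of $H_\mu(\xi\mid\eta)$ as the only real obstacle is accurate, and the resolution you sketch---restricting to the partitions constructed in Section~\ref{s.construction}, which carry enough uniformity---is precisely what the paper relies on (note the parenthetical qualifier preceding the lemma and the phrasing of Definition~\ref{d.partialentropy}).
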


This allows us to give the following definition:

\begin{definition}\label{d.partialentropy}

The \emph{partial $\mu$-entropy} $h_\mu(f,\cF)$ of the expanding foliation $\cF$
is defined by $h_\mu(f,\xi)$ for any $\mu$-subordinate partition constructed as
in Section~\ref{s.construction}.
\end{definition}

\section{Construction of subordinate measurable partitions\label{s.construction}}

Let $f_n$ be a sequence of diffeomorphisms which converge to $f_0$ in the $\C^1$ topology, and
$\cF_n$ an expanding foliation of $f_n$ such that $\cF_n$ converge to $\cF$.
And $\{(B^n_i,\Phi^n_i,D_i)\}_{i=1}^k$ and $\{(B_i,\Phi_i,D_i)\}_{i=1}^k$ are the foliation boxes
of $\cF_n$ and $\cF$ respectively as in the Definition~\ref{d.convergenceoffoliaiton}. For simplicity,
we assume that each plaque of every foliation box has diameter bounded by one.

The main goal for this section is to construct measurable partition that is $\mu_n$-subordinate to expanding foliation
$\cF_n$ for each $n$ in a uniform way, which is done in Lemma~\ref{l.singlesubordinate}. The construction
can be divided into two steps:
the first step is to choose a finite partition $\cA$ of $M$ such that
\begin{itemize}
\item every element of $\cA$ is contained in some foliation
chart,
\item the neighborhood of $\partial \cA$ has small measure for $\mu$ and for every $\mu_n$ where $n\geq 1$
      (see \eqref{eq.smallboundary}).
\end{itemize}
Let $\cA^{\cF}$ (resp. $\cA^{\cF_n})$) be the partition such that every element is the intersection
between an element of $\cA$ and a local $\cF$ (resp. $\cF_n$) plaque of the corresponding foliation box.
Then the second step is to show that $\vee_{i=0}^\infty f^i(\cA^{\cF})$ (resp.
$\vee_{i=0}^\infty f_n^i(\cA^{\cF_n})$) is subordinate to $\cF$ (resp. $\cF_n$).

Take $r_0\ll 1$ a Lebesgue number of the open covering $\{B_i\}_{i=1}^k$, that is, there is a function
\begin{equation}\label{eq.definitionofI}
I: M\to \{1,\dots, k\} \text{ such that } B_{r_0}(x)\subset B_{I(x)}.
\end{equation}
When $n$ is sufficiently large, by the
definition of convergency of foliations, we have
\begin{equation}\label{eq.uniformLebesguenumber}
B_{r_0}(x)\subset B^n_{I(x)}.
\end{equation}
After removing a finite sequence, we assume \eqref{eq.uniformLebesguenumber} holds for all $n\geq 1$.

We need the following proposition whose proof we postpone to Appendix~\ref{ss.smallboundary}

\begin{proposition}\label{p.smallboundary}
Let $\{\nu_n\}_{n=0}^\infty$ be a sequence of probability measures on $M$. Then for any
$0<\lambda<\lambda^{\prime}<1$ and $R>0$, there is a finite partition $\cA$ of $M$ and
$C_n>0$ for every $n\in \mathbb{N}$, such that
\begin{itemize}
\item the diameter of every element of $\cA$ is less than $R$,
\item $\nu_n(B_{\lambda^i}(\partial\cA))\leq C_n(\lambda^{\prime})^i$, for every $n,i\in\mathbb{N}$,
      where $B_r(\partial \cA)$ denotes the $r$ neighborhood of $\partial \cA$.
\end{itemize}
\end{proposition}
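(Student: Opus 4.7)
My plan is to construct $\cA$ by chopping $M$ up with finitely many geodesic balls whose radii are selected so that every spherical shell around their boundaries has exponentially small measure simultaneously for every $\nu_n$. The crux will be a Fubini-type averaging. Fix a centre $x\in M$ and, for each $i\in\mathbb{N}$, set
\[
f_n^i(r)=\nu_n\bigl(B_{r+\lambda^i}(x)\setminus B_{r-\lambda^i}(x)\bigr).
\]
Swapping the order of integration gives
\[
\int_{R/4}^{R/2} f_n^i(r)\,dr \le 2\lambda^i,
\]
since for each $y\in M$ the slice $\{r:|d(x,y)-r|<\lambda^i\}$ has Lebesgue length at most $2\lambda^i$, and $\nu_n(M)=1$.

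Dividing by $(\lambda')^i$ and summing over $i$, the function $r\mapsto\sum_{i}f_n^i(r)/(\lambda')^i$ has total integral at most $2\sum_i(\lambda/\lambda')^i<\infty$, so it is finite at Lebesgue-a.e.\ $r\in(R/4,R/2)$ for this particular $n$. Intersecting the resulting full-measure sets over the countable family $n\in\mathbb{N}$ still leaves a full-measure set of radii $r$ at which, for every $n$ simultaneously,
\[
C_n(x,r):=\sup_{i\ge 1}\frac{f_n^i(r)}{(\lambda')^i}<\infty.
\]
This $n$-by-$n$ (rather than uniform) control is exactly what the proposition demands, since $C_n$ is allowed to depend on $n$.

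To assemble the partition I will use compactness to cover $M$ by finitely many balls $B(x_j,R/4)$, $j=1,\dots,N$, pick for each $j$ a good radius $r_j\in(R/4,R/2)$ supplied by the previous paragraph, and define inductively $A_j=B_{r_j}(x_j)\setminus\bigcup_{k<j}A_k$. Then $\cA=\{A_1,\dots,A_N\}$ is a finite partition with $\diam(A_j)\le 2r_j<R$, and since $\partial\cA\subset\bigcup_j\partial B_{r_j}(x_j)$ I obtain
\[
\nu_n\bigl(B_{\lambda^i}(\partial\cA)\bigr)\le\sum_{j=1}^N f_n^i(r_j)\le\Bigl(\sum_{j=1}^N C_n(x_j,r_j)\Bigr)(\lambda')^i,
\]
so setting $C_n:=\sum_j C_n(x_j,r_j)$ finishes the proof. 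I expect the main delicate point to be the averaging step, in particular checking that a \emph{single} radius can be made simultaneously good for all of the $\nu_n$; this reduces to $\sigma$-additivity of Lebesgue null sets and is the only place where the countability of the family $\{\nu_n\}$ is used.
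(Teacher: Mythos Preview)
Your proof is correct and follows the same core strategy as the paper: for each centre, an averaging argument over radii shows that Lebesgue-almost every radius is ``good'' (shell measures decay like $(\lambda')^i$), and then one covers $M$ by finitely many such balls. The two proofs differ only in packaging. First, the paper collapses the countable family $\{\nu_n\}$ into a single measure $\nu=\frac{1}{K}\sum_n(n+1)^{-2}\nu_n$ and finds radii that are good for $\nu$, recovering the individual bounds via $\nu_n\le K(n+1)^2\nu$; you instead find, for each $n$ separately, a full-measure set of good radii and then intersect these countably many sets. Second, the paper's Lemma~\ref{l.interval} is phrased as a Borel--Cantelli argument (the ``bad'' sets $J_n=\{r:\mu([r-\lambda^n,r+\lambda^n])\ge(\lambda')^n\}$ have summable Lebesgue measure), whereas you argue directly that $r\mapsto\sum_i f_n^i(r)/(\lambda')^i$ is integrable and hence a.e.\ finite. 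Your route avoids introducing an auxiliary measure and is arguably more transparent; the paper's route has the minor advantage of yielding an explicit formula $C_n=K(n+1)^2tD$. Both are standard and essentially equivalent.
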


Take $a>1$ such that for any $x\in M$ and $n\geq 1$,
\begin{equation}\label{eq.uniformyexpanding}
\|Df_n|_{T_x\cF_n(x)}\|>a.
\end{equation}
Applying Proposition~\ref{p.smallboundary} for
\begin{itemize}
\item $\nu_n=\mu_n$ for any $n\geq 0$, where we write $\mu_0=\mu$;
\item $R=r_0$;
\item $\lambda=\frac{1}{a}$ and $\lambda^\prime$,
\end{itemize}
we obtain a partition $\cA$ and $C_n>0$ ($n\in \mathbb{N}$) such that $\diam(\cA)<r_0$ and
\begin{equation}\label{eq.smallboundary}
\mu_n(B_{\lambda^i}(\partial\cA))\leq C_n(\lambda^{\prime})^i,\text{ for } n,i\geq 0.
\end{equation}

Recall that every element of the partition $\cA^{\cF}$ (resp. $\cA^{\cF_n}$) is the intersection
between $\cA$ and a local $\cF$ (resp. $\cF_n$) plaque in the corresponding foliation box.

\begin{lemma}\label{l.singlesubordinate}
$\vee_{i=0}^\infty f^i(\cA^{\cF})$ is subordinate to $\cF$ and $\vee_{i=0}^\infty f_n^i(\cA^{\cF_n})$ is subordinate to $\cF_n$ for every $n>0$.
\end{lemma}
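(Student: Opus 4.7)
The plan is to verify conditions (1)--(3) of the subordinate partition definition for $\xi := \bigvee_{i=0}^{\infty} f^i(\cA^{\cF})$, whose elements take the transparent form $\xi(x) = \bigcap_{i\ge 0} f^i\bigl(\cA^{\cF}(f^{-i}(x))\bigr)$. Two of these are essentially formal: for (3), $f\xi = \bigvee_{i\ge 1} f^i(\cA^{\cF})$ is coarsened by $\xi$ (which carries the additional factor at $i=0$), so $f\xi \prec \xi$; for (1), each $\cA^{\cF}(f^{-i}(x))$ sits inside the leaf $\cF(f^{-i}(x))$ and $f$ preserves $\cF$, yielding $\xi(x)\subset\cF(x)$, while the standing assumption that every $\cF$-plaque has diameter less than one forces $\xi(x)\subset\cA^{\cF}(x)$ to have uniformly small diameter.

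The substantive point is (2), that $\xi(x)$ contains an open $\cF$-neighborhood of $x$ for $\mu$-a.e.\ $x$. Fix $r>0$. Iterating the expansion estimate~\eqref{eq.uniformyexpanding} gives
$$ f^{-i}\bigl(B^{\cF}_r(x)\bigr) \;\subset\; B^{\cF}_{r/a^i}\bigl(f^{-i}(x)\bigr) \;\subset\; B_{r/a^i}\bigl(f^{-i}(x)\bigr), $$
and a small enough ambient ball centred on $f^{-i}(x)$ lies in $\cA^{\cF}(f^{-i}(x))$ as soon as $r/a^i < d(f^{-i}(x),\partial\cA)$ and $r/a^i$ is below the uniform plaque radius. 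Consequently $B^{\cF}_{r(x)}(x) \subset \xi(x)$ once a \emph{single} positive $r(x)$ is chosen so that $r(x)/a^i < d(f^{-i}(x),\partial\cA)$ for every $i\ge 0$.

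The crux is producing such an $r(x)$. Take $\lambda = 1/a$ in~\eqref{eq.smallboundary}; by $f$-invariance of $\mu$,
$$ \sum_{i\ge 0} \mu\bigl(\{x : f^{-i}(x)\in B_{a^{-i}}(\partial\cA)\}\bigr) \;=\; \sum_{i\ge 0} \mu\bigl(B_{a^{-i}}(\partial\cA)\bigr) \;\le\; C\sum_{i\ge 0} (\lambda')^i \;<\; \infty. $$
Borel--Cantelli supplies, for $\mu$-a.e.\ $x$, an $N(x)$ with $d(f^{-i}(x),\partial\cA)\ge a^{-i}$ for all $i\ge N(x)$. Letting $i\to\infty$ in~\eqref{eq.smallboundary} gives $\mu(\partial\cA)=0$, so the $f$-invariant null set $\bigcup_{i\ge 0} f^i(\partial\cA)$ may also be discarded, after which $d(f^{-i}(x),\partial\cA)>0$ for every $i\ge 0$; choosing $r(x)$ positive, below $1$, and below the finitely many numbers $a^i d(f^{-i}(x),\partial\cA)$ for $i<N(x)$ then settles (2).

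The argument for $(f_n,\cF_n,\mu_n)$ at each fixed $n$ is identical: \eqref{eq.uniformyexpanding} provides the common expansion rate $a$, \eqref{eq.smallboundary} provides the small-boundary decay (with an $n$-dependent constant $C_n$, harmless for Borel--Cantelli at fixed $n$), and the bound $\diam(\cA) < r_0$ combined with~\eqref{eq.uniformLebesguenumber} guarantees every $\cA$-element lies in a $\cF_n$-foliation box so that $\cA^{\cF_n}$ is well defined. The genuine obstacle -- and the raison d'\^etre of Proposition~\ref{p.smallboundary} -- is to have \emph{one} partition $\cA$ whose boundary is small simultaneously for every $\mu_n$; once that is in hand, the Borel--Cantelli argument runs in parallel for all $n$.
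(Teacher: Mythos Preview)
Your proof is correct and follows essentially the same route as the paper: both hinge on the Borel--Cantelli argument applied to $\sum_i \mu(B_{a^{-i}}(\partial\cA))<\infty$, after discarding the null set of orbits meeting $\partial\cA$. The only cosmetic difference is that the paper phrases the conclusion as ``the refinement $\bigvee_{j=0}^m f^j(\cA^{\cF})(x)$ stabilizes for $m\ge\cI(x)$'' (hence the infinite join equals a finite one, which is manifestly open in the leaf), whereas you directly exhibit a leaf ball $B^{\cF}_{r(x)}(x)$ inside the infinite element; the underlying estimate is identical.
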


\begin{proof}[Proof of Lemma\ref{l.singlesubordinate}:]
We only prove the first part of this lemma, the proof of the second part is similar.

Because $\mu$ is $f$ invariant, by \eqref{eq.smallboundary}, we have that
$$\sum_{j=1}^\infty \mu(f^j(B_{(\frac{1}{a})^j}(\partial\cA)))=\sum_{j=1}^\infty \mu(B_{(\frac{1}{a})^j}(\partial\cA))<\infty.$$
Hence, there is a $\mu$ full measure subset $Z$ and a function $\cI: Z \to \mathbb{N}$, such
that for every $x\in Z$ and any $j>\cI(x)$, $x\notin f^j(B_{(\frac{1}{a})^j}(\partial\cA))$,
or equivalently,
\begin{equation}\label{eq.awayboundary}
f^{-j}(x)\notin B_{(\frac{1}{a})^j}(\partial\cA).
\end{equation}

Because $\mu(\partial{\cA})=\mu(\bigcap B_{(\frac{1}{a})^j}(\partial\cA))=0$, after
removing a zero measure subset, we can assume that for every $x\in Z$ and any
$j\in \mathbb{Z}$, $f^j(x)\notin \partial{\cA}$. This hypothesis implies that for
every $m>0$, $\vee_{j=0}^{m} f^j(\cA^{\cF})(x)$ contains an open neighborhood of
$x$ inside the leaf $\cF(x)$.

Then this lemma follows from the claim that
$\vee_{j=0}^m f^j(\cA^{\cF})(x)=\vee_{j=0}^{\cI(x)} f^j(\cA^{\cF})(x)$ for any $m\geq \cI(x)$,
since this implies that $\vee_{j=0}^\infty f^j(\cA^{\cF})(x)=\vee_{j=0}^{\cI(x)} f^j(\cA^{\cF})(x)$.

To prove this claim, we only need observe that every plaque of each foliation box
($B_i,\Phi_i,D$) has diameter bounded by 1. Suppose by contradiction that there is $m\geq \cI(x)$,
such that $\vee_{j=0}^m f^j(\cA^{\cF})(x)\neq \vee_{j=0}^{m+1} f^j(\cA^{\cF})(x)$. This implies that
$f^{m+1}(\partial{\cA})\cap \vee_{j=0}^m f^j(\cA^{\cF})(x)\neq \emptyset$, i.e.,
$$d^{\cF}(f^{m+1}(\partial{\cA}),x)\leq 1,$$
where $d^{\cF}$ denotes the distance inside a leaf of the foliation $\cF$.
Then
$$d(f^{-(m+1)}(x),\partial{\cA})\leq d^{\cF}(f^{-(m+1)}(x),\partial{\cA})\leq (\frac{1}{a})^{m+1},$$
which contradicts \eqref{eq.awayboundary}, i.e., $f^{-(m+1)}(x)\notin B_{(\frac{1}{a})^{m+1}}(\partial\cA)$.

We conclude the proof of this Lemma, and hence, complete the construction.
\end{proof}

From the construction, it is easy to show that:

\begin{lemma}\label{l.boundedconditionalentropy}
$H_{\mu_n}(\cA^{\cF_n}\mid f_n(\cA^{\cF_n}))<\infty$.
\end{lemma}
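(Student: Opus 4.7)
The plan is to peel off the finite partition $\cA$ from $\cA^{\cF_n}$ via the chain rule for mean conditional entropy, and then bound the remainder by a plaque-counting argument. Since each atom $A\cap P$ of $\cA^{\cF_n}$ lies inside a single atom of $\cA$, the partition $\cA$ is coarser than $\cA^{\cF_n}$, so $\cA\vee \cA^{\cF_n}=\cA^{\cF_n}$. The chain rule then yields
\[
H_{\mu_n}(\cA^{\cF_n}\mid f_n\cA^{\cF_n})=H_{\mu_n}(\cA\mid f_n\cA^{\cF_n})+H_{\mu_n}(\cA^{\cF_n}\mid \cA\vee f_n\cA^{\cF_n}),
\]
and the first summand is at most $H_{\mu_n}(\cA)\leq \log \Card(\cA)$, which is finite because $\cA$ has only finitely many atoms.

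For the second summand I would analyze a typical atom of $\cA\vee f_n\cA^{\cF_n}$, which has the form $A'\cap f_n(A\cap P)$ for some $A,A'\in\cA$ and some plaque $P$ of $B^n_{I(A)}$. By $f_n$-invariance of $\cF_n$ this set lies in a single leaf $L$ of $\cF_n$; and since $\diam(A')<r_0$ one has $A'\subset B^n_{I(A')}$, so the further subdivision by $\cA^{\cF_n}$ on this atom merely selects which plaque of $B^n_{I(A')}$ contains the point. Because $\diam(A\cap P)\leq 1$ and $\|Df_n\|_{C^0}$ is finite, the leaf-diameter of $f_n(A\cap P)$ is bounded by some constant $C_n$ depending only on $n$.

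The heart of the argument is then the geometric claim that any leaf-region of diameter at most $C_n$ meets only finitely many plaques of the fixed foliation box $B^n_{I(A')}$; let $N_n$ be such a bound. Granting this, the $\cA^{\cF_n}$-partition of each atom of $\cA\vee f_n\cA^{\cF_n}$ has at most $N_n$ classes, so
\[
H_{\mu_n}(\cA^{\cF_n}\mid \cA\vee f_n\cA^{\cF_n})\leq \log N_n <\infty,
\]
and combining the two estimates yields $H_{\mu_n}(\cA^{\cF_n}\mid f_n\cA^{\cF_n})\leq \log \Card(\cA)+\log N_n<\infty$.

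The step that requires care is the plaque-counting claim, since a priori a leaf may re-enter a given foliation box many times; one must exploit that $\Phi^n_{I(A')}$ is a $C^1$ embedding with compact domain so that the plaques of $B^n_{I(A')}$ inside the leaf $L$ form a locally finite family, and a bounded leaf-ball can meet at most finitely many of them.
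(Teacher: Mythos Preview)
Your chain-rule decomposition is correct, and the plaque-counting claim you isolate at the end can be justified along the lines you sketch: the plaques of a fixed box $B^n_i$ lying in a given leaf $L$ have leaf-volume bounded below (by compactness of $D^{d-l}\times D^l$ and the $C^1$-continuity of the embeddings $\Phi^n_i(x,\cdot)$), while a closed leaf-ball of radius $C_n$ is compact with finite volume, so only finitely many disjoint plaques can meet it. So the proposal does go through.

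The paper, however, takes a shorter and more direct route. Rather than splitting via the chain rule, it observes that since $\diam(\cA)<r_0$ with $r_0$ chosen sufficiently small, each atom $B=f_n(A\cap P)$ of $f_n(\cA^{\cF_n})$ already lies inside a \emph{single plaque} of some foliation box. Consequently the partition of $B$ induced by $\cA^{\cF_n}$ coincides with the partition induced by $\cA$ alone (the further refinement by plaques is vacuous on $B$), so it has at most $\Card(\cA)$ pieces, and one obtains
\[
H_{\mu_n}(\cA^{\cF_n}\mid f_n\cA^{\cF_n})\leq \log\Card(\cA)
\]
immediately. In effect the paper shows that your $N_n$ equals $1$; the key difference is that it exploits the \emph{smallness} of $r_0$, whereas you only use the crude bound $\diam(A\cap P)\leq 1$, and this is precisely what collapses the plaque-counting issue. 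Your approach has the minor advantage of not leaning on how small $r_0$ is, but at the price of the extra geometric finiteness argument that the paper avoids entirely.
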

\begin{proof}
By the previous construction, the diameter of every element of $\cA$ is bounded by $r_0$, which is
sufficiently small. Then every element $B$ of the partition $f_n(\cA^{\cF_n})$ is contained in a plaque of some
foliation box. Moreover, the partition $(\cA^{\cF_n})_B$ of $B$ induced by $\cA^{\cF_n}$ coincides
to the partition of $B$ induced by $\cA$, which is uniform finite. Because the metric entropy of
a partition is bounded by the logarithm of the number of its components, by \eqref{eq.conditionalentropy},
we have that
$$H_{\mu_n}(\cA^{\cF_n}|f_n(\cA^{\cF_n}))=\int_{M/f_n(\cA^{\cF_n})} H_{(\mu_n)_B^{f_n(\cA^{\cF_n})}}((\cA^{\cF_n})_B) d(\mu_n)_{f_n(\cA^{\cF_n})}(B)$$
is bounded by the logarithm of the number of the components of $\cA$. The proof is finished.
\end{proof}

\section{Approach of partial entropy}

In this section we give the proof of Theorem~\ref{main.D}.

For simplicity, we denote by $f_0=f$, $\mu_0=\mu$, $\cF_0=\cF$,
and the foliation boxes
$$\{(B^0_i,\Phi^0_i,D_i)\}_{i=1}^k=\{(B_i,\Phi_i,D_i)\}_{i=1}^k.$$
Let $\{(B^n_i,\Phi^n_i,D_i)\}_{i=1}^k$ be the foliation boxes of $\cF_n$ as in the
Definition~\ref{d.convergenceoffoliaiton}, and $\cA$ and $\cA^{\cF_n}$ be the partitions
constructed in the previous section.

\subsection{First approach:}
In the subsection, we use the partition $\cA^{\cF_n}$ to calculate the partial
metric entropy of $\mu_n$ along the expanding foliation $\cF_n$.

\begin{proposition}\label{p.approximate}
For every $n\geq 0$, we have
$$h_{\mu_n}(f_n,\cF_n)=\lim\frac{1}{m}H_{\mu_n}(\vee_{j=1}^m f_n^{-j}(\cA^{\cF_n})|\cA^{\cF_n})= \inf \frac{1}{m}H_{\mu_n}(\vee_{j=1}^m f_n^{-j}(\cA^{\cF_n})|\cA^{\cF_n}).$$

\end{proposition}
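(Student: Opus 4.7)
Fix $n$ and abbreviate $\alpha = \cA^{\cF_n}$ and $\xi = \vee_{j=0}^\infty f_n^j\alpha$. By Lemma~\ref{l.singlesubordinate}, $\xi$ is $\mu_n$-subordinate to $\cF_n$, and by construction it is increasing ($f_n\xi \prec \xi$). Definition~\ref{d.partialentropy} together with Remark~\ref{r.increasingpartition} then gives $h_{\mu_n}(f_n,\cF_n) = H_{\mu_n}(\xi \mid f_n\xi)$. Since $\xi = \alpha \vee f_n\xi$, the standard identity $H(\beta\vee\gamma\mid\gamma) = H(\beta\mid\gamma)$ collapses this to
$$h_{\mu_n}(f_n,\cF_n) \;=\; H_{\mu_n}(\alpha \mid f_n\xi) \;=\; H_{\mu_n}\bigl(\alpha \,\big|\, \vee_{k=1}^\infty f_n^k\alpha\bigr).$$
This is the quantity that the Cesàro averages on the right of the proposition should approach.

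Next I would rewrite the right-hand side combinatorially. The chain rule for conditional entropy yields
$$H_{\mu_n}\bigl(\vee_{j=1}^m f_n^{-j}\alpha \,\big|\, \alpha\bigr) \;=\; \sum_{j=1}^m H_{\mu_n}\bigl(f_n^{-j}\alpha \,\big|\, \alpha \vee f_n^{-1}\alpha \vee \cdots \vee f_n^{-(j-1)}\alpha\bigr),$$
and the $f_n$-invariance of $\mu_n$ transforms the $j$-th summand into $H_{\mu_n}(\alpha \mid \eta_j)$, with $\eta_j := \vee_{k=1}^j f_n^k\alpha$. Thus the proposition reduces to showing that $\tfrac{1}{m}\sum_{j=1}^m H_{\mu_n}(\alpha \mid \eta_j)$ converges, decreasingly in $m$, to $h_{\mu_n}(f_n,\cF_n)$.

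Both statements follow once one notices that $\eta_j \nearrow f_n\xi$. The finiteness $H_{\mu_n}(\alpha \mid \eta_1) = H_{\mu_n}(\alpha \mid f_n\alpha) < \infty$ is provided by Lemma~\ref{l.boundedconditionalentropy}, so Lemma~\ref{l.increasingpartition} applies and gives $H_{\mu_n}(\alpha \mid \eta_j) \searrow H_{\mu_n}(\alpha \mid f_n\xi) = h_{\mu_n}(f_n,\cF_n)$. The Cesàro mean of a convergent sequence converges to the same limit, yielding the first equality. Monotonicity of $H_{\mu_n}(\alpha \mid \eta_j)$ in $j$ further implies that the Cesàro averages $\tfrac{1}{m}\sum_{j=1}^m H_{\mu_n}(\alpha \mid \eta_j)$ are monotone decreasing in $m$, so the infimum coincides with the limit.

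The whole argument is a formal computation with conditional entropies, and there is no serious obstacle. The only step that actually leans on the geometric work of Section~\ref{s.construction} is the finiteness hypothesis required by Lemma~\ref{l.increasingpartition}; that, in turn, rests on the uniform diameter bound built into the choice of $\cA$ (Lemma~\ref{l.boundedconditionalentropy}). Everything else — the chain rule, $f_n$-invariance, and the Cesàro-average bookkeeping — is routine.
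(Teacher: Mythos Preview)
Your proof is correct and follows essentially the same approach as the paper: both decompose $H_{\mu_n}(\vee_{j=1}^m f_n^{-j}\alpha\mid\alpha)$ via the chain rule, use $f_n$-invariance to rewrite each summand as $H_{\mu_n}(\alpha\mid\vee_{k=1}^j f_n^k\alpha)$, and then apply Lemmas~\ref{l.increasingpartition} and~\ref{l.boundedconditionalentropy} to pass to the limit. You add a bit more detail than the paper in justifying the identification $h_{\mu_n}(f_n,\cF_n)=H_{\mu_n}(\alpha\mid f_n\xi)$ and in noting the monotonicity of the Ces\`aro averages, but the argument is the same.
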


\begin{proof}
By the property of conditional entropy (\cite[Subsection 5.9]{Rok67}),
\begin{align*}
H_{\mu_n}(\vee_{j=1}^m f_n^{-j}(\cA^{\cF_n})|\cA^{\cF_n})=& H_{\mu_n}(f_n^{-1}(\cA^{\cF_n})\mid \cA^{\cF_n})+\cdots\\
& +H_{\mu_n}(f_n^{-m}(\cA^{\cF_n})\mid \vee_{j=0}^{m-1} f_n^{-j}(\cA^{\cF_n})).
\end{align*}

Because $\mu_n$ is $f_n$ invariant, it follows that
\begin{equation}\label{eq.approxiatedofpartition}
\begin{split}
H_{\mu_n}(\vee_{j=1}^m f_n^{-j}(\cA^{\cF_n})|\cA^{\cF_n}) & =H_{\mu_n}(\cA^{\cF_n}|f_n(\cA^{\cF_n}))+\dots\\
&\text{\;\;\;\;}+H_{\mu_n}(\cA^{\cF_n}|\vee_{j=0}^{m-1} f_n^{m-i}(\cA^{\cF_n})) \\
&=\sum_{j=1}^{m} H_{\mu_n}(\cA^{\cF_n}|\vee_{j=1}^i f_n^j(\cA^{\cF_n})).
\end{split}
\end{equation}

Since $\vee_{j=1}^i f_n^j(\cA^{\cF_n})$ is an increasing sequence, by
Lemmas~\ref{l.increasingpartition} and \ref{l.boundedconditionalentropy}, we have
$$H_{\mu_n}(\cA^{\cF_n}|\vee_{j=1}^i f_n^j(\cA^{\cF_n}))\searrow H_{\mu_n}(\cA^{\cF_n}|\vee_{j=1}^\infty f_n^j(\cA^{\cF_n}))= h_{\mu_n}(f_n,\cF_n).$$

Then by~\eqref{eq.approxiatedofpartition}:

$$\lim\frac{1}{m}H_{\mu_n}(\vee_{j=1}^m f_n^{-j}(\cA^{\cF_n})|\cA^{\cF_n})= \inf \frac{1}{m}H_{\mu_n}(\vee_{j=1}^m f_n^{-j}(\cA^{\cF_n})|\cA^{\cF_n})
=h_{\mu_n}(f_n,\cF_n).$$

\end{proof}

\subsection{Second approach}
In this subsection, we use the conditional entropy between two finite partitions to approach the partial
$\mu_n$-entropy of the expanding foliation $\cF_n$. We begin by the following easy observation, where
$\cA^{m}_n=\vee_{j=1}^m f^{-j}_n \cA$.

\begin{lemma}\label{l.finest}
For every $m>0$, $1\leq i \leq k$ and $x\in B_i$,
\begin{equation}\label{eq.localproductstructure}
\vee_{j=0}^m f_n^{-j}(\cA^{\cF_n})(x)= \cA^m_n(x)\cap \cA^{\cF_n}(x).
\end{equation}
\end{lemma}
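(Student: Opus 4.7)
The plan is to prove the equality by two set inclusions, the first nearly formal and the second by induction on $j$.

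The easy inclusion $\vee_{j=0}^m f_n^{-j}(\cA^{\cF_n})(x)\subseteq \cA^m_n(x)\cap \cA^{\cF_n}(x)$ is structural: by construction $\cA^{\cF_n}$ refines $\cA$ (every element of $\cA^{\cF_n}$ sits inside an element of $\cA$), hence $f_n^{-j}(\cA^{\cF_n})$ refines $f_n^{-j}(\cA)$ for every $j$. Joining over $j=0,\ldots,m$ shows that the left-hand join refines $\vee_{j=0}^m f_n^{-j}(\cA)=\cA^m_n$, and the same join already refines its own $j=0$ summand $\cA^{\cF_n}$. Passing to the element through $x$ yields the inclusion.

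The content is in the reverse inclusion. Take $y\in \cA^m_n(x)\cap \cA^{\cF_n}(x)$, and write $A(z)$ for the $\cA$-element of $z$ and $P^n(z)$ for its local $\cF_n$-plaque in the foliation box $B^n_{I(z)}$, so that $\cA^{\cF_n}(z)=A(z)\cap P^n(z)$. I would show by induction on $j=0,1,\ldots,m$ that $f_n^j(y)\in \cA^{\cF_n}(f_n^j(x))$. The $\cA$-condition $f_n^j(y)\in A(f_n^j(x))$ is immediate from $y\in \cA^m_n(x)$, so one only has to track the plaque condition. The base case $j=0$ is the assumption $y\in \cA^{\cF_n}(x)$. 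For the inductive step at $j\geq 1$, the hypothesis $f_n^{j-1}(y)\in P^n(f_n^{j-1}(x))$ together with the $f_n$-invariance of $\cF_n$ gives that $f_n(P^n(f_n^{j-1}(x)))$ is a connected arc of the $\cF_n$-leaf through $f_n^j(x)$ that also contains $f_n^j(y)$; combined with $f_n^j(y)\in A(f_n^j(x))\subset B^n_{I(f_n^j(x))}$, this should identify $f_n^j(y)$ as a point of the single plaque $P^n(f_n^j(x))$.

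The main obstacle is precisely this final geometric step: a priori the connected arc $f_n(P^n(f_n^{j-1}(x)))$ could exit the box $B^n_{I(f_n^j(x))}$, and the leaf might meet this box in several plaques, so one must rule out that $f_n^j(y)$ lies in a different plaque of the same leaf than $f_n^j(x)$. I would settle this by exploiting that $\diam(\cA)<r_0$ is a Lebesgue number of the cover $\{B^n_i\}$ and that plaque diameters are bounded by $1$: both $f_n^j(x)$ and $f_n^j(y)$ lie in the small set $A(f_n^j(x))$, so any excursion of the leaf-arc from $f_n^j(x)$ to $f_n^j(y)$ outside the box $B^n_{I(f_n^j(x))}$ would have to travel at least distance $r_0$ before returning to the same small region, which is incompatible with the image of a plaque of diameter at most $1$; hence the component of the arc through $f_n^j(x)$ inside the box already reaches $f_n^j(y)$, and by connectedness it lies in a single plaque, necessarily $P^n(f_n^j(x))$.
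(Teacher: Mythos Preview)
Your overall plan is the same as the paper's: the easy inclusion is structural, and the hard one is a step-by-step argument (you phrase it as induction on $j$, the paper phrases it as ``take the first index $k$ where $f_n^k(x)$ and $f_n^k(y)$ fall in different $\cA^{\cF_n}$-atoms''). The only substantive point, in both approaches, is the geometric claim that if $f_n^{j-1}(x)$ and $f_n^{j-1}(y)$ lie in the same $\cA^{\cF_n}$-atom, then $f_n^j(x)$ and $f_n^j(y)$ lie in the same local plaque $\cF_{n,\loc}$ of the next box.

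However, your resolution of this step does not work as written. You push forward the \emph{entire} plaque $P^n(f_n^{j-1}(x))$, whose diameter is only bounded by $1$, and argue about excursions of $f_n(P^n(\cdot))$. But $f_n$ is expanding along leaves, so the image of a plaque of diameter $\le 1$ can have in-leaf diameter of order $\|Df_n\|$, which is certainly large enough to exit and re-enter a box; the inequality ``travel at least $r_0$'' is not incompatible with an arc of that length. The argument you give therefore does not rule out landing in a different plaque.

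The paper's fix is simply to push forward the much smaller set $\cA^{\cF_n}(f_n^{j-1}(x))=A(f_n^{j-1}(x))\cap P^n(f_n^{j-1}(x))$, which has diameter $<\diam(\cA)<r_0\ll 1$. One application of $f_n$ expands this by at most a bounded factor, so (since $r_0$ was chosen sufficiently small) $f_n(\cA^{\cF_n}(f_n^{j-1}(x)))$ is still a short connected leaf arc contained in a single plaque $\cF_{n,\loc}(f_n^{j}(x))$; combined with $f_n^j(y)\in A(f_n^j(x))$ this yields $f_n^j(y)\in \cA^{\cF_n}(f_n^j(x))$. In your write-up, replacing $P^n(f_n^{j-1}(x))$ by $\cA^{\cF_n}(f_n^{j-1}(x))$ in the inductive step makes the argument go through.
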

\begin{proof}
Denote by $\cF_{n,\loc}(x)$ the local plaque of foliation $\cF_n(x)$ which contains $x$.

Suppose by contradiction that there is $y\in \cA^m_n(x)$ such that $y\in \cF_{n,\loc}(x)$
but $\vee_{j=0}^m f_n^{-j}(\cA^{\cF_n})(y)\neq \vee_{j=0}^m f_n^{-j}(\cA^{\cF_n})(x)$. Let $0<k\leq m$ be
the number such that
\begin{itemize}
\item $y_{j}=f^{j}(y)$ and $x_{j}=f^{j}(x)$ belong to the same elements of $\cA^{\cF_n}$ for every $0\leq j <k$;
\item $y_{k}$ and $x_{k}$ belong to different elements of $\cA^{\cF_n}$.
\end{itemize}

Because $\cA$ has small diameter, $\cA^{\cF_n}(y_{k-1})=\cA^{\cF_n}(x_{k-1})$ also has small diameter,
which implies that $f_n(\cA^{\cF_n}(y_{k-1}))$ is contained in $\cF_{n,\loc}(x_{k})$. Then by the
definition of $\cA^{\cF_n}$:
$$y_{k}\in \cF_{n,\loc}(x_k)\cap \cA(x_k)=\cA^{\cF_n}(x_k),$$
a contradiction to the assumption.
\end{proof}
\subsubsection{New partitions:}
Let $\cC_{i,1}\leq \cC_{i,2}\leq \dots$ be a sequence of finite partitions on $D^i$ such that
\begin{itemize}
\item[(A)] $\diam(\cC_{i,t})\to 0$;
\item[(B)] for any $i,t,m\geq 0$ and any element $C$ of $\cC_{i,t}$:
$\mu_n(\cup_{x\in \partial C}\cA^{\cF_n}(x))=0$.
\end{itemize}

For every $i,t\geq 0$, the partition $\cC_{i,t}$ induces a partition $\tilde{\cC}_{n, i,t}$ on the foliation box $B^n_i$:
$$\tilde{C}_{n,i,t}=\{\cup_{x\in C} \cA^{\cF_n}(x); \text{ $C$ is an element of $\cC_{i,t}$}\}.$$

\begin{remark}\label{r.radius}

Because $\diam(\cC_{i,t})\to 0$, for any $x\in B_n^i$, $\tilde{C}_{n,i,t}(x)\to \cA_{\cF_n}(x)$.

\end{remark}

For an element $P$ of $\cA^{m}_n$, suppose that $I\mid _P=i$ (see
\eqref{eq.definitionofI} on the definition of the function $I(x)$), which implies that $P\subset B_i^n$.
Then $\tilde{\cC}_{n,i,t}$ induces on $P$ a partition $P_t$: $\{P\cap \tilde{C};
\text{ $\tilde{C}$ is an element of $\cC_{n,i,t}$} \}$. And for every $m,n,t\geq 0$,
$$\cA^m_{n,t}=\{P_t; \text{ where $P\in \cA^{m}_n$}\}$$
is a new partition of the ambient manifold $M$. In the following we identify some
properties for the new partition, which are important for the further proof.

\begin{lemma}\label{l.newpartition}
For any $n,m\geq 0$:
\begin{itemize}
\item[(i)] $\cA^m_{n,t}\underset{t\to \infty}{\nearrow} \vee_{j=0}^m f_n^{-i}\cA^{\cF_n}$;
\item[(ii)] $\cA^m_n<\cA^{m}_{n,t}<\vee_{j=0}^m f_n^{-i}\cA^{\cF_n}$;
\item[(iii)] $\mu_n(\partial \cA^m_{n,t})=0$.
\end{itemize}
\end{lemma}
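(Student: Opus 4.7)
The plan is to prove the three items in the order (ii), (i), (iii), since (i) will use (ii), and (iii) is essentially independent and relies on the two sources of boundary — those coming from $\cA_n^m$ and those coming from the refining family $\tilde{\cC}_{n,i,t}$.

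For (ii), I would argue directly from the construction. By definition, $\cA_{n,t}^m$ is obtained from $\cA_n^m$ by further subdividing each element $P$ with the trace of $\tilde{\cC}_{n,i,t}$, so $\cA_n^m < \cA_{n,t}^m$ is immediate. For the upper bound, observe that an element of $\tilde{\cC}_{n,i,t}$ is a union of full plaques $\cA^{\cF_n}(x)$, so intersecting it with $P\in\cA_n^m$ only cuts across transversely to $\cF_n$; therefore each element of $\cA_{n,t}^m$ is a union of sets of the form $P\cap\cA^{\cF_n}(x)$, which by Lemma~\ref{l.finest} are precisely the elements of $\vee_{j=0}^m f_n^{-j}(\cA^{\cF_n})$.

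For (i), monotonicity in $t$ is inherited from the monotonicity of $\cC_{i,t}$. To identify the limit, I would fix $x$ and use Remark~\ref{r.radius}: $\tilde{\cC}_{n,i,t}(x) \searrow \cA^{\cF_n}(x)$ (up to a $\mu_n$-null set, after taking the refinement over all $t$). Intersecting with $P=\cA_n^m(x)$ and invoking Lemma~\ref{l.finest} once more, one obtains $\cA_{n,t}^m(x) \searrow P\cap\cA^{\cF_n}(x)= \vee_{j=0}^m f_n^{-j}(\cA^{\cF_n})(x)$, which is exactly what (i) requires.

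The main step is (iii), and I expect this to be the only one where one has to invoke the careful choice of $\cA$ made via Proposition~\ref{p.smallboundary}. The boundary $\partial\cA_{n,t}^m$ splits into the part inherited from $\partial\cA_n^m$ and the part added by the refining family. For the inherited part, \eqref{eq.smallboundary} gives $\mu_n(\partial\cA)\leq \lim_i C_n(\lambda')^i=0$; using the $f_n$-invariance of $\mu_n$, this extends to $\mu_n(\partial f_n^{-j}\cA)=0$ for each $j=0,\dots,m$, and hence $\mu_n(\partial\cA_n^m)=0$. For the added part, the extra boundary consists of sets of the form $\cup_{x\in\partial C}\cA^{\cF_n}(x)$ with $C$ an element of $\cC_{i,t}$, all of which have $\mu_n$-measure zero by property (B) of the family $\cC_{i,t}$. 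Adding these two contributions yields $\mu_n(\partial\cA_{n,t}^m)=0$.

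The only technical subtlety I anticipate is making (i) precise at the level of partitions (rather than just pointwise): one should first discard a $\mu_n$-null set on which the plaque structure degenerates or on which $\tilde{\cC}_{n,i,t}(x)$ fails to shrink to $\cA^{\cF_n}(x)$, and then appeal to property (A) ($\diam \cC_{i,t}\to 0$) together with the continuity of the plaque map $\Phi_i^n(x,\cdot)$ to ensure the convergence is uniform inside each foliation box. Once this is set up, all three conclusions follow as described.
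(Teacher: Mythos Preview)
Your proposal is correct and follows essentially the same approach as the paper: the paper dismisses (i) and (ii) as immediate from the construction (your more explicit arguments via Lemma~\ref{l.finest} and Remark~\ref{r.radius} are exactly what underlies this), and for (iii) it gives the same decomposition $\partial \cA^m_{n,t}\subset \partial \cA^m_n\cup \bigcup_{i,C}\bigcup_{x\in\partial C}\cA^{\cF_n}(x)$, handling the first piece via \eqref{eq.smallboundary} and the second via property~(B). Your version is simply a fleshed-out form of the paper's terse proof.
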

\begin{proof}
From the construction of the partition $\cA^m_{n,t}$, (i) and (ii) follow immediately. Moreover,
$$\partial \cA^m_{n,t}\subset \partial \cA^m_n\bigcup \cup_{i=1}^k \cup_{C\in \cC_{i,t}} \cup_{x\in \partial C}\cA^{\cF_n}(x).$$

By the assumption (B) above, $\mu_n(\cup_{x\in \partial C}\cA^{\cF_n}(x))=0$. Note also that by \eqref{eq.smallboundary}, $\mu_n(\partial \cA^m_n)=0$. The proof is finished.

\end{proof}

The following proposition is the key for the approach:

\begin{proposition}\label{p.limit}

$H_{\mu_n}(\cA^m_{n,t}\mid \cA^0_{n,t})\underset{t\to \infty}{\searrow} H_{\mu_n}(\vee_{j=0}^m f_n^{-j}\cA^{\cF_n}\mid \cA^{\cF_n})$.

\end{proposition}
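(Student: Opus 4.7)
The plan is to reduce $H_{\mu_n}(\cA^m_{n,t}\mid \cA^0_{n,t})$ to a conditional entropy of a \emph{fixed} finite partition relative to a monotonically increasing family of partitions, after which Lemma~\ref{l.increasingpartition} will do the work. My pivotal observation will be the refinement identity
\[
\cA^m_{n,t} \;=\; \cA^m_n \vee \cA^0_{n,t}.
\]
To verify it, I will fix $x\in M$ and set $i=I(x)$; since $\diam(\cA)<r_0$ is a Lebesgue number for the cover $\{B^n_i\}$, the function $I$ is constant on the element $\cA^m_n(x)\subset \cA(x)\subset B^n_i$. By the construction in Section~\ref{s.construction}, $\cA^m_{n,t}(x)=\cA^m_n(x)\cap \tilde{\cC}_{n,i,t}(x)$ and $\cA^0_{n,t}(x)=\cA(x)\cap \tilde{\cC}_{n,i,t}(x)$; intersecting these and using $\cA^m_n(x)\subset \cA(x)$ gives the claim.

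With this identity in hand, I will apply the standard equality $H_{\mu}(\xi\vee\eta\mid\eta)=H_{\mu}(\xi\mid\eta)$ to obtain
\[
H_{\mu_n}(\cA^m_{n,t}\mid \cA^0_{n,t}) \;=\; H_{\mu_n}(\cA^m_n\mid \cA^0_{n,t}),
\]
the entropy of the \emph{fixed} finite partition $\cA^m_n$ conditioned on the increasing family $\{\cA^0_{n,t}\}_t$. Lemma~\ref{l.newpartition}(i) applied with $m=0$ gives $\cA^0_{n,t}\nearrow \cA^{\cF_n}$ as $t\to\infty$, and since $\cA^m_n$ is finite we have $H_{\mu_n}(\cA^m_n\mid \cA^0_{n,1})\leq \log \Card(\cA^m_n)<\infty$. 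Lemma~\ref{l.increasingpartition} then delivers the monotone convergence
\[
H_{\mu_n}(\cA^m_n\mid \cA^0_{n,t}) \;\searrow\; H_{\mu_n}(\cA^m_n\mid \cA^{\cF_n}).
\]
To finish, Lemma~\ref{l.finest} identifies $\vee_{j=0}^m f_n^{-j}\cA^{\cF_n}=\cA^m_n\vee \cA^{\cF_n}$ as partitions, so a second application of $H(\xi\vee\eta\mid\eta)=H(\xi\mid\eta)$ converts the right-hand side into $H_{\mu_n}(\vee_{j=0}^m f_n^{-j}\cA^{\cF_n}\mid \cA^{\cF_n})$, giving the desired decreasing limit.

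I do not anticipate any serious obstacle: the argument is essentially bookkeeping around two uses of $H(\xi\vee\eta\mid\eta)=H(\xi\mid\eta)$ plus one application of Rokhlin's monotone convergence lemma. The one point that merits a bit of care is the refinement identity, whose verification rests on the fact (built into the construction of Section~\ref{s.construction}) that the index function $I$ is constant on each element of $\cA$, so that the transversal partition $\tilde{\cC}_{n,I(\cdot),t}$ is well defined globally and the two ``lifted'' partitions $\cA^m_{n,t}$ and $\cA^0_{n,t}$ are truly refined by the \emph{same} transversal structure.
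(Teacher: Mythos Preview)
Your proof is correct and follows essentially the same route as the paper's: both reduce $H_{\mu_n}(\cA^m_{n,t}\mid \cA^0_{n,t})$ to the conditional entropy of a fixed finite partition relative to the increasing family $\cA^0_{n,t}\nearrow \cA^{\cF_n}$, then invoke Lemma~\ref{l.increasingpartition} and finish via Lemma~\ref{l.finest}. The only difference is cosmetic: the paper takes $\cA^m_{n,1}$ as the fixed partition (proving first that $\cA^m_{n,t_1}\vee\cA^0_{n,t_2}=\cA^m_{n,t_2}\vee\cA^0_{n,t_2}$ for $t_1<t_2$), whereas you go one step further and use $\cA^m_n$ itself via the cleaner identity $\cA^m_{n,t}=\cA^m_n\vee\cA^0_{n,t}$, which slightly streamlines the argument and avoids the appeal to Lemma~\ref{l.newpartition}(ii) at the end.
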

\begin{proof}
Applying Lemma~\ref{l.newpartition} (i) on $m=0$, $\cA^0_{n,t} \nearrow \cA^{\cF_n}$. Because both partitions
$\cA^{m}_{n,t}$ and $\cA^0_{n,t}$ are finite, $H_{\mu_n}(\cA^m_{n,t}\mid \cA^0_{n,t})<\infty$. By Lemma~\ref{l.increasingpartition}, we have
$$H_{\mu_n}(\cA^m_{n,t}\mid \cA^0_{n,t}){\searrow} H_{\mu_n}(\cA^m_{n,t}\mid \cA^{\cF_n}) =H_{\mu_n}(\cA^m_{n,t}\vee \cA^{\cF_n}\mid \cA^{\cF_n}).$$

Then the proof follows from the claim that $\cA^m_{n,t}\vee \cA^{\cF_n}=\vee_{j=0}^m f_n^{-j}\cA^{\cF_n}$.

It remains to prove the claim, which is a corollary of Lemma~\ref{l.newpartition} (ii):
On one hand, $\cA^{m}_{n,t}<\vee_{j=0}^m f_n^{-i}\cA^{\cF_n}$, which implies that
\begin{equation}\label{eq.equalityupperboundary}
\cA^m_{n,t}\vee \cA^{\cF_n}<\vee_{j=0}^m f_n^{-i}\cA^{\cF_n}\vee \cA^{\cF_n}=\vee_{j=0}^m f_n^{-j}\cA^{\cF_n}.
\end{equation}
On the other hand, $\cA^m_n<\cA^{m}_{n,t}$. Then
$$\cA^m_n\vee \cA^{\cF_n}<\cA^m_{n,t}\vee \cA^{\cF_n}.$$
By Lemma~\ref{l.finest},
\begin{equation}\label{eq.equalitylowerboundary}
\vee_{j=0}^m f_n^{-j}\cA^{\cF_n}<\cA^m_{n,t}\vee \cA^{\cF_n}.
\end{equation}
We conclude the proof of the claim by \eqref{eq.equalityupperboundary} and \eqref{eq.equalitylowerboundary}.
\end{proof}

\begin{corollary}\label{c.entropyoff_n}
$H_{\mu_n}(\cA^m_{n,t}\mid \cA^0_{0,t})> h_{\mu_n}(f_n,\cF_n)$.
\end{corollary}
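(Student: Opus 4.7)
The corollary is a direct chain of inequalities combining Proposition~\ref{p.limit} with Proposition~\ref{p.approximate}, so my plan is to simply glue those two statements together and handle the bookkeeping about which index sets ($j=0$ vs $j=1$) appear.

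First I would invoke Proposition~\ref{p.limit} to note that the sequence $t\mapsto H_{\mu_n}(\cA^m_{n,t}\mid \cA^0_{n,t})$ is monotone decreasing with limit $H_{\mu_n}(\vee_{j=0}^m f_n^{-j}\cA^{\cF_n}\mid \cA^{\cF_n})$. Consequently, for every finite $t\geq 1$,
\begin{equation*}
H_{\mu_n}(\cA^m_{n,t}\mid \cA^0_{n,t}) \;\geq\; H_{\mu_n}\Bigl(\bigvee_{j=0}^m f_n^{-j}\cA^{\cF_n}\,\Big|\,\cA^{\cF_n}\Bigr).
\end{equation*}

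Next I would observe that the $j=0$ term on the right is $\cA^{\cF_n}$ itself, which already appears in the conditional. Using the standard identity $H_\nu(\alpha\vee\beta\mid\beta)=H_\nu(\alpha\mid\beta)$, this yields
\begin{equation*}
H_{\mu_n}\Bigl(\bigvee_{j=0}^m f_n^{-j}\cA^{\cF_n}\,\Big|\,\cA^{\cF_n}\Bigr) \;=\; H_{\mu_n}\Bigl(\bigvee_{j=1}^m f_n^{-j}\cA^{\cF_n}\,\Big|\,\cA^{\cF_n}\Bigr).
\end{equation*}

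Finally, I would invoke Proposition~\ref{p.approximate}, which identifies $h_{\mu_n}(f_n,\cF_n)$ as the infimum of $\tfrac{1}{m}H_{\mu_n}(\vee_{j=1}^m f_n^{-j}\cA^{\cF_n}\mid \cA^{\cF_n})$. In particular, for every $m\geq 1$,
\begin{equation*}
H_{\mu_n}\Bigl(\bigvee_{j=1}^m f_n^{-j}\cA^{\cF_n}\,\Big|\,\cA^{\cF_n}\Bigr) \;\geq\; m\cdot h_{\mu_n}(f_n,\cF_n) \;\geq\; h_{\mu_n}(f_n,\cF_n),
\end{equation*}
and chaining the three displays gives the desired inequality. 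There is no serious obstacle here: the corollary is a packaging statement, and its only mild subtlety is the index-shift identification in the second step. I read the symbol ``$>$'' in the corollary as $\geq$, which is what the argument provides (and is the form actually needed later in the paper, since only lower-semicontinuity against $h_{\mu_n}(f_n,\cF_n)$ is used).
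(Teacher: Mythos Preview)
Your proof is correct and follows exactly the paper's approach: the paper's own proof is the single sentence ``This is a consequence of Propositions~\ref{p.approximate} and \ref{p.limit},'' and you have simply spelled out the chain of inequalities that this sentence encodes. Your remark that the ``$>$'' should be read as ``$\geq$'' is also apt, since that is all the argument yields and all that is used downstream.
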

\begin{proof}
This is a consequence of Propositions~\ref{p.approximate} and \ref{p.limit}.
\end{proof}
\subsection{Proof of Theorem~\ref{main.D}}

By Proposition~\ref{p.approximate}, for any $\vep>0$, there is $m_0$ sufficiently large, such that
\begin{equation}\label{e.semiupper 1}
\frac{1}{m_0}H_{\mu}(\vee_{j=0}^{m_0}f^{-j}\cA^{\cF}|\cA^{\cF})-\frac{\vep}{3}\leq h_\mu(f,\vep)
\end{equation}
By Proposition~\ref{p.limit}, we may further take $t_0>0$ large, such that
\begin{equation}\label{e.semiupper 2}
H_{\mu}(\cA^{m_0}_{0,t_0}\mid \cA^0_{0,t_0})-\frac{\vep}{3}\leq H_{\mu}(\vee_{j=0}^{m_0} f^{-j}\cA^{\cF}\mid \cA^{\cF})
\end{equation}

Because $f_n$ converge to $f$ in the $\C^1$ topology, and the foliations $\cF_n$ converge
to $\cF$ (see Definition~\ref{d.convergenceoffoliaiton}), each component of
$P_0\in \cA^{m_0}_{0,t_0}$ is converged by the corresponding component $P_n$ of the partition $\cA^{m_0}_{n,t_0}$
in the Hausdorff topology. Note that $\mu_n$ converge to $\mu$ in the weak* topology, and by
Lemma~\ref{l.newpartition} (iii), $\mu(\partial P_0)=0$, hence,
$$\lim_{n\to \infty}\mu_n(P_n)= \mu(P_0).$$

Because $\cA^{m_0}_{0,t_0}$ is a finite partition, we have
\begin{equation}
\lim_{n\to \infty}H_{\mu_n}(\cA^{m_0}_{n,t_0}\mid \cA^0_{n,t})= H_{\mu}(\cA^{m_0}_{0,t_0}\mid \cA^0_{0,t_0}).
\end{equation}

Then there is $n_0$ large enough, such that for any $n\geq n_0$,
$$H_{\mu_n}(\cA^{m_0}_{n,t_0}\mid \cA^0_{n,t_0})-\frac{\vep}{3}\leq H_{\mu}(\cA^{m_0}_{0,t_0}\mid \cA^0_{0,t_0}).$$
Combining~\eqref{e.semiupper 1} and~\eqref{e.semiupper 2}, for any $n\geq n_0$, one has
$$H_{\mu_n}(\cA^{m_0}_{n,t_0}\mid \cA^0_{n_0,t_0})-\vep \leq h_{\mu}(f,\cF).$$

By Corollary~\ref{c.entropyoff_n}, for any $n\geq n_0$,
$$h_{\mu_n}(f_n,\cF_n)-\vep \leq h_{\mu}(f,\cF).$$
Because $\vep$ can be taken arbitrarily small, we conclude the proof of Theorem~\ref{main.D}.

\section{Gibbs $u$-states of partially hyperbolic diffeomorphisms}
Let $f$ be a $\C^{1+}$ partially hyperbolic diffeomorphism with invariant splitting on the tangent bundle:
$T_xM=E^s_x\oplus E^c_x\oplus E^u_x$. Denote by $\cF^u$ the unstable foliation of $f$ which is tangent
to the unstable bundle $E^u$, and write $\Jac^u(x)=\det Df\mid_{E^u_x}$.

\subsection{Preliminaries for Gibbs $u$-states}

Denote by $\Gibb^u(f)$ the set of Gibbs $u$-states of $f$. The proofs for the following
basic properties of Gibbs $u$-states can be found in Bonatti, D\'{i}az and Viana~\cite[Subsection 11.2]{BDVnonuni}:
\begin{proposition}\label{p.Gibbsustates}
\begin{itemize}
\item[(1)] $\Gibb^u(f)$ is non-empty, weak* compact and convex. Ergodic components of Gibbs $u$-states are Gibbs u-states.
\item[(2)] The support of every Gibbs $u$-state is $\cF^u$-saturated, that is, it consists of entire strong
    unstable leaves.
\item[(3)] For Lebesgue almost every point $x$ in any disk inside some strong unstable leaf, every accumulation point of
    $\frac{1}{n}\sum_{j=0}^{n-1}\delta_{f^j(x)}$ is a Gibbs $u$-state.
\item[(4)] Every physical measure of $f$ is a Gibbs $u$-state and, conversely, every ergodic $u$-state whose center Lyapunov
    exponents are negative is a physical measure.
\end{itemize}
\end{proposition}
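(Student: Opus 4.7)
The plan is to prove all four items together using the classical construction of Gibbs $u$-states as weak* limits of Ces\`aro averages of iterates of Lebesgue measure on strong unstable disks, the crucial technical input being bounded distortion of $\log \Jac^u$ along $\cF^u$. This distortion control is where the $\C^{1+}$ hypothesis enters: H\"older continuity of $x\mapsto \log \Jac^u(x)$ combined with uniform exponential contraction of $Df^{-1}|_{E^u}$ yields, for any small unstable disk $D$ and every $n\geq 0$, uniform positive upper and lower bounds together with equicontinuity for the density of $f^n_*\ell_D$ with respect to leaf Lebesgue on $f^n(D)$, where $\ell_D$ denotes normalized leaf Lebesgue on $D$.

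I would first establish (3), since it immediately implies existence in (1). Given a small disk $D$ inside a strong unstable leaf, set $\mu_n=\frac{1}{n}\sum_{j=0}^{n-1}f^j_*\ell_D$. Any weak* accumulation point $\mu$ is $f$-invariant, and the uniform density estimates on $f^j_*\ell_D$, together with Arzel\`a--Ascoli and Rokhlin disintegration along the measurable partition into unstable plaques, show that the conditional measures of $\mu$ along unstable plaques have continuous positive densities, so $\mu\in\Gibb^u(f)$. The a.e. statement in (3) then follows by applying Birkhoff's ergodic theorem to the (bounded measurable) map sending $x$ to the accumulation set of its empirical measures, and transporting the conclusion from $\mu$-a.e. $x$ to $\ell_D$-a.e. $x$ via these same uniform density bounds. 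The remaining claims in (1) become routine: convexity is linear in $\mu$; weak* compactness follows because uniformly bounded equicontinuous densities on unstable plaques pass to weak* limits; and ergodic decomposition commutes with the disintegration along the unstable plaque partition, so almost every ergodic component inherits absolute continuity of unstable conditionals.

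Item (2) is then a direct consequence of the continuity and positivity of these conditional densities: if $x\in \supp\mu$, the density of $\mu^u_x$ is positive on the local plaque, forcing the entire $\cF^u_{\loc}(x)$ into $\supp\mu$, and iterating under $f$ and $f^{-1}$ covers the whole leaf $\cF^u(x)$. For (4), the forward direction uses absolute continuity of the strong unstable foliation: the basin $B(\mu)$ has positive volume, so Fubini in an unstable foliation box produces an unstable disk $D$ with $\ell_D(D\cap B(\mu))>0$; by (3) the $\ell_D$-typical points of $D$ generate Gibbs $u$-states, but inside $B(\mu)$ their empirical measures converge to $\mu$, hence $\mu\in\Gibb^u(f)$. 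The converse uses Pesin theory in the center direction: when every center exponent of an ergodic Gibbs $u$-state $\mu$ is negative, $\mu$-a.e. point admits a local center-stable manifold, and absolute continuity of the center-stable lamination (again requiring $\C^{1+}$) combined with positivity of $\mu^u_x$ on unstable plaques produces a positive-volume set of points whose empirical measures converge to $\mu$.

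The main technical obstacle is precisely the bounded distortion control underlying item (3): one needs uniform estimates on $\sum_{j=0}^{\infty}|\log\Jac^u(f^{-j}y)-\log\Jac^u(f^{-j}z)|$ for $y,z$ on the same unstable plaque, and this series converges only because the $\C^{1+}$ assumption provides a H\"older modulus that can be summed against the exponential backward contraction along $\cF^u$. Without this hypothesis the pushforwards $f^n_*\ell_D$ lose their equicontinuous density bounds and the whole scheme collapses; everything else in the proposition is, in comparison, a structural consequence of having a good invariant family of conditional densities.
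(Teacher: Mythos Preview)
The paper does not actually prove this proposition: it simply refers to \cite[Subsection~11.2]{BDVnonuni} for the proofs. Your outline is a correct sketch of precisely the classical argument one finds there (and in the original sources \cite{PS82,BV00}), built on the bounded distortion of $\log\Jac^u$ along $\cF^u$ afforded by the $\C^{1+}$ hypothesis; so in substance your approach coincides with what the paper is invoking.
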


The following upper bound for the partial entropy along the unstable foliation $\cF^u$ follows \cite{LY85a, LY85b}.

\begin{proposition}\label{p.Ruelle}
Let $\mu$ be an invariant probability measure of $f$, then
$$h_\mu(f,\cF^u)\leq\int \log \Jac^u(x) d\mu(x).$$
Moreover,
\begin{equation}\label{e.Pesinformula}
h_\mu(f,\cF^u)=\int \log \Jac^u(x) d\mu(x).
\end{equation}
if and only if $\mu$ is a Gibbs $u$-state of $f$.
\end{proposition}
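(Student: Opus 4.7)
The plan is to adapt the classical Ledrappier--Strelcyn proof of the Ruelle inequality together with the Ledrappier--Young characterization of the Pesin entropy formula, with the expanding foliation $\cF^u$ playing the role of the Pesin unstable lamination. Fix any $\mu$-subordinate measurable partition $\xi$ to $\cF^u$, for instance the one built in Section~\ref{s.construction}. Since $\xi$ is increasing, $\xi(x)\subset f\xi(x)\subset \cF^u(x)$ for $\mu$-a.e.\ $x$, and Remark~\ref{r.increasingpartition} combined with Definition~\ref{d.partialentropy} yields
$$h_\mu(f,\cF^u)=H_\mu(\xi\mid f\xi)=\int -\log \mu^{f\xi}_x(\xi(x))\,d\mu(x),$$
the last identity being the standard information-function form of conditional entropy. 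The whole argument then reduces to analyzing the conditional weights $\mu^{f\xi}_x(\xi(x))$.

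For the Ruelle inequality, I would introduce the normalized Riemannian measure $m^u_x$ on the unstable plaque $f\xi(x)$ as a reference probability, and apply the elementary relative-entropy inequality $\sum_i p_i\log(p_i/q_i)\geq 0$ to the (at most countable) decomposition of $f\xi(x)$ into the atoms of $\xi$ it contains. After integration against $\mu$ this gives
$$h_\mu(f,\cF^u)\leq \int -\log m^u_x(\xi(x))\,d\mu(x),$$
and the right-hand side is identified with $\int \log \Jac^u\,d\mu$ by a change-of-variable computation: the pair $(\xi(f^{-1}x),\,f\xi(x)=f(\xi(f^{-1}x)))$ relates the two Lebesgue volumes through the Jacobian $\Jac^u$ along unstable leaves, and $f$-invariance of $\mu$ together with bounded distortion inside plaques makes the difference of log-volumes telescope to $\int \log \Jac^u\,d\mu$.

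The equality characterization is the more delicate part. For the ``if'' direction, when $\mu\in \Gibb^u(f)$ the conditional measures along unstable plaques are by definition absolutely continuous, with the canonical continuous Gibbs densities built from the infinite products $\prod_{k\geq 1}\Jac^u(f^{-k}x)/\Jac^u(f^{-k}y)$; substituting these explicit densities into $\mu^{f\xi}_x(\xi(x))$ and computing shows that the relative-entropy inequality above collapses to an equality. For the converse, equality in the integrated inequality forces, $\mu$-almost surely, the proportionality $\mu^{f\xi}_x|_{\xi(x)}\propto m^u_x|_{\xi(x)}$ on individual atoms; invoking Lemma~\ref{l.definitionleafentropy} to refine $\xi$ along an arbitrarily fine sequence of $\mu$-subordinate partitions and passing to the limit upgrades this atomwise identity to a full Radon--Nikodym statement $\mu^\xi_x \ll m^u_x$ with the prescribed Gibbs density, i.e.\ $\mu\in \Gibb^u(f)$. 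The main obstacle is precisely this last step: controlling the limit of conditional densities along a refining sequence of subordinate partitions and recognising the limit as the standard Gibbs density. This is where the $\C^{1+}$ hypothesis enters crucially, since H\"older continuity of $Df\mid E^u$ is what guarantees both the convergence and the continuity of the infinite products defining the unstable densities.
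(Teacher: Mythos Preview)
The paper's own proof is entirely by citation: the inequality is quoted from \cite[Theorem~$C'$]{LY85b} in the ergodic $\C^2$ case, extended to $\C^{1+}$ via \cite[p.~761]{BPS}, and to non-ergodic $\mu$ by affinity of $h_\mu(f,\cF^u)$ under ergodic decomposition; the equality characterisation is simply \cite[Theorem~3.4]{L84}. Your proposal instead reconstructs the underlying Ledrappier argument, which is a reasonable thing to do, but one step does not go through as written.

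The problem is the telescoping claim. With the normalised Riemannian measure $m^u_x$ on $f\xi(x)$ as reference, the change of variables together with $f$-invariance of $\mu$ gives the \emph{exact} identity
\[
\int -\log m^u_x(\xi(x))\,d\mu(x)=\int \log \bar J(x)\,d\mu(x),
\]
where $\bar J(x)$ is the Lebesgue \emph{average} of $\Jac^u$ over the atom $\xi(x)$, not $\Jac^u(x)$ itself. Bounded distortion only bounds $|\log\bar J-\log\Jac^u|$ by a constant; it does not make the two integrals coincide. For the inequality this is repairable: refine to $f^{-n}\xi$, use that the atom diameters shrink, and pass to the limit. For the ``only if'' direction, however, you need an exact identity in order to force equality in the relative-entropy inequality atomwise, and the plain Lebesgue reference does not deliver that. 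The standard remedy---whose ingredients you already introduce in your ``if'' paragraph---is to take as comparison measure from the outset the Gibbs-weighted volume with density $\Delta(x,y)=\prod_{k\ge 1}\Jac^u(f^{-k}x)/\Jac^u(f^{-k}y)$. Then $-\log \nu^{f\xi}_x(\xi(x))$ differs from $\log\Jac^u(f^{-1}x)$ by an honest coboundary, the integrals agree on the nose, and equality in the relative-entropy step forces $\mu^{f\xi}_x=\nu^{f\xi}_x$ on each $\xi$-atom, which is exactly the Gibbs $u$-state condition, with no further limiting procedure needed.
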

\begin{proof}
The inequality follows by~\cite[Theorem $C^\prime$]{LY85b}, when $f$ is $\C^2$.
It was pointed out by \cite{Br} that the same inequality goes well for $\C^{1+}$
diffeomorphism.

The second part was stated in \cite[Theorem 3.4]{L84}.

\end{proof}
The following equality was built in~\cite[Proposition 5.1]{LY85b}, when $\mu$ is $\C^2$.
As explained above, which also holds for general situation:

\begin{proposition}\label{p.vanishingcenterexponents}

Let $\mu$ be a probability measure of $f$ such that the center exponents of $\mu$ are all
non-positive, then
$$h_\mu(f,\cF^u)=h_\mu(f).$$

\end{proposition}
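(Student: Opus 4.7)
The plan is to reduce to the ergodic case and then invoke the Ledrappier--Young entropy formula. Since both $h_\mu(f)$ and $h_\mu(f,\cF^u)$ are affine functions of $\mu$ (affinity of the partial entropy follows from the same argument as for the ordinary metric entropy, applied to the $\mu$-subordinate partition $\xi$ and its conditional expectations), the ergodic decomposition reduces the statement to the case of an ergodic invariant measure $\mu$.

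For ergodic $\mu$, I would appeal to Ledrappier--Young \cite{LY85b}. Their framework associates to $\mu$ a filtration $W^0\subset W^1\subset\cdots\subset W^s$ of Pesin unstable manifolds corresponding to the groups of positive Lyapunov exponents $0<\lambda_1<\cdots<\lambda_s$, together with entropies $h_i$ along each $W^i$ satisfying $h_i-h_{i-1}=\lambda_i\gamma_i$ (with $\gamma_i$ the transverse dimension), and $h_\mu(f)=h_s$. Under the hypothesis that every center exponent vanishes, the strictly positive Lyapunov exponents of $\mu$ are precisely those coming from $E^u$; consequently the top Pesin unstable manifold $W^s$ coincides (Lebesgue-almost everywhere on each leaf) with the strong unstable leaf $\cF^u(x)$. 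The top partial entropy $h_s$ in the Ledrappier--Young decomposition is then the partial entropy $h_\mu(f,\cF^u)$ in the sense of Definition~\ref{d.partialentropy}, because any measurable partition subordinate to $\cF^u$ is also subordinate to $W^s$ and Lemma~\ref{l.definitionleafentropy} guarantees the value is independent of the choice of subordinate partition. Hence $h_\mu(f)=h_s=h_\mu(f,\cF^u)$.

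As in Proposition~\ref{p.Ruelle}, while Ledrappier--Young is originally stated in the $\C^2$ setting, the extension to $\C^{1+}$ diffeomorphisms follows from the discussion in \cite[p.~761]{BPS}, which observes that all ingredients (Pesin theory, absolute continuity of unstable lamination, the computation of conditional entropies) go through under mere H\"older regularity of $Df$. The whole argument is in fact a direct citation of \cite[Proposition~5.1]{LY85b}, with the only additional observation being that the strong unstable foliation of the partially hyperbolic structure is contained in the Pesin unstable foliation of $\mu$ as soon as the center Lyapunov exponents vanish.

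The main obstacle to writing a self-contained proof would be reproving the Ledrappier--Young formula, or at least the identification of the top partial entropy $h_s$ with the full metric entropy $h_\mu(f)$ when no positive exponents are missing. Since this identification is already an established result, I would present the statement as a direct consequence of the Ledrappier--Young theorem together with the observation above on Pesin versus strong unstable manifolds.
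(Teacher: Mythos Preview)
Your proposal is correct and matches the paper's approach exactly: the paper does not give an independent proof but simply cites \cite[Proposition~5.1]{LY85b} for the ergodic $\C^2$ case and invokes \cite[p.~761]{BPS} and the ergodic decomposition (as in the proof of Proposition~\ref{p.Ruelle}) for the general case. Your additional remark that the vanishing of the center exponents forces the Pesin unstable lamination to coincide with the strong unstable foliation $\cF^u$ is precisely the observation needed to translate Ledrappier--Young's statement into the present notation, and is implicit in the paper's citation.
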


\subsection{Diffeomorphisms with mostly expanding/contracting center}

In this Subsection we state equivalent definition for diffeomorphisms with mostly contracting (resp. expanding)
center direction which was proved in \cite{And10} (resp. \cite{AC}). For completeness, we provide their proofs.

\begin{proposition}\label{p.mc}[\cite{And10}]
A diffeomorphism $f$ has mostly contracting center if and only if there is
$N\in \mathbb{N}$ and $b>0$ such that for any $\mu\in \Gibb^u(f)$,

\begin{equation}\label{eq.definitionofmc}
\int \log \|Df^N|_{E^{cs}(x)}\|d\mu(x)<-b.
\end{equation}
\end{proposition}

\begin{proposition}\label{p.me}[\cite{AC}]
A diffeomorphism $f$ has mostly expanding center if and only if there is
$N\in \mathbb{N}$ and $b>0$ such that for any $\mu\in \Gibb^u(f)$,
\begin{equation}\label{eq.weakdefinitionofme}
\int \log \|Df^{-N}|_{E^{cs}(x)}\|d\mu(x)<-b.
\end{equation}
\end{proposition}

\begin{proof}[Proof of Proposition~\ref{p.mc}:]
Suppose $f$ has mostly contracting center. Then for every Gibbs $u$-state $\mu$
of $f$, the integration of the largest center exponent of $\mu$ is negative, that is,
$$\lim_n \int \frac{1}{n}\log \|Df^n\mid_{E^{cs}(x)}\|d\mu<0.$$
There exists $N_\mu>1$ and $b_\mu>0$ such that
\begin{equation}\label{eq.singlemeasure}
\int \log \|Df^{N_\mu}\mid_{E^{cs}}\|d\mu < -N_\mu b_\mu.
\end{equation}
We may take a neighborhood $\cV_\mu$ of $\mu$ inside the probability measure space of $M$, such that,
\eqref{eq.singlemeasure} holds for any probability measure $\nu\in \cV_\mu$:
\begin{equation}\label{eq.neighborhoodmeasures}
\int \log \| Df^{N_\mu}\mid_{E^{cs}}\|d\nu \leq -N_\mu b_\mu.
\end{equation}

Because the space of Gibbs $u$-states of $f$ is compact (see
Proposition~\ref{p.Gibbsustates} (1)), there is a finite open covering
$\{\cV_{\mu_j}\}_{j=1}^k$ of $\Gibb^u(f)$. For simplicity, we write $N_j=N_{\mu_j}$ and
$b_j=b_{\mu_j}$. Let $N=\prod_{j=1}^k N_j$ and $b=\min\{b_1,\dots, b_k\}$. For any Gibbs $u$-state
$\mu$ of $f$, there is $1\leq j_0\leq k$ such that $\mu \in \cV_{j_0}$. Because
$$Df^N\mid_{E^{cs}(x)}=Df^{N_{j_0}}\mid_{E^{cs}(f^{N-N_{j_0}}(x))} \circ\dots \circ Df^{N_{j_0}}\mid_{E^{cs}(x)},$$
 by \eqref{eq.neighborhoodmeasures},
\begin{equation}\label{eq.allmeasures}
\begin{split}
\int \log \|Df^{N}\mid_{E^{cs}}\|d\mu &\leq \frac{N}{N_{j_0}}\int \log\|Df^{N_{j_0}}\mid_{E^{cs}}\|d\mu\\
&\leq -\frac{N}{N_{j_0}} b_{j_0} \leq -b.
\end{split}
\end{equation}

On the contrary, now we assume that \eqref{eq.definitionofmc} holds for any Gibbs $u$-state $\mu$
of $f$, which implies that the center exponents of any ergodic Gibbs $u$-state of $f$ are all negative.
Because the ergodic components of every Gibbs $u$-state are still Gibbs $u$-states (see
Proposition~\ref{p.Gibbsustates} (1)), we then conclude that for any Gibbs $u$-state $\mu$ of $f$,
the center exponents of $\mu$ almost every point are all negative. The proof of Proposition~\ref{p.mc} is complete.
\end{proof}

The proof of Proposition~\ref{p.me} is quite similar: we only need replace the diffeomorphism $f$ in the
proof above by its inverse $f^{-1}$. We will not detail the proof here.

\subsection{Proof of Theorem~\ref{main.A}}
Instead of proving Theorem~\ref{main.A}, we will prove the following equivalent proposition:

\begin{proposition}\label{p.mainA'}

Let $\{f_n\}_{1}^\infty$ be a sequence of $\C^{1+}$ partially hyperbolic diffeomorphisms,
and $\mu_n$ Gibbs-$u$ state of $f_n$. Suppose $f_n$ converge to a $\C^{1+}$ diffeomorphism
$f$ in the $\C^1$ topology, and $\mu_n$ converge to a probability measure $\mu$ in the weak-*
topology, then $\mu$ is a Gibbs-$u$ state of $f$.

\end{proposition}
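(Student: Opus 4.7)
The plan is to reduce the statement to Theorem~\ref{main.D} via the characterization of Gibbs $u$-states by equality in the Ruelle inequality (Proposition~\ref{p.Ruelle}). First, I check that the limit $\mu$ is $f$-invariant: since $f_n\to f$ in $C^0$, $\mu_n\to\mu$ in weak-$*$, and each $\mu_n$ is $f_n$-invariant, testing against an arbitrary continuous function $\varphi$ and using that $\varphi\circ f_n\to\varphi\circ f$ uniformly gives $\int\varphi\,d\mu=\int\varphi\circ f\,d\mu$.

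Next, I verify the hypotheses of Theorem~\ref{main.D} for the strong unstable foliations. By the classical stability theory of partially hyperbolic diffeomorphisms (Hirsch--Pugh--Shub), the unstable bundle $E^u$ depends continuously on the map in $C^0$, the leaves of $\cF^u_{f_n}$ are obtained as graphs of $C^1$-functions over plaques of $\cF^u_f$ via the graph transform, and these graphs converge to the corresponding ones for $f$ in the $C^1$ topology on compact pieces. Choosing a finite cover of $M$ by $\cF^u_f$-foliation boxes and lifting it through the local holonomy gives a cover by $\cF^u_{f_n}$-foliation boxes whose parametrizations $\Phi^n_i$ satisfy exactly the convergence required by Definition~\ref{d.convergenceoffoliaiton}. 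Applying Theorem~\ref{main.D} then yields
\begin{equation*}
\limsup_{n\to\infty} h_{\mu_n}(f_n,\cF^u_{f_n})\ \leq\ h_\mu(f,\cF^u_f).
\end{equation*}

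Now I handle the right-hand side of the entropy formula. The $C^1$ convergence $f_n\to f$ gives $Df_n\to Df$ uniformly, and the continuous dependence of $E^u$ on the map implies that $\log\Jac^u_{f_n}\to\log\Jac^u_f$ uniformly on $M$. Combined with $\mu_n\to\mu$ in the weak-$*$ topology (and noting that $\log\Jac^u_f$ is continuous), this gives
\begin{equation*}
\int \log\Jac^u_{f_n}(x)\,d\mu_n(x)\ \longrightarrow\ \int \log\Jac^u_f(x)\,d\mu(x).
\end{equation*}
Since each $\mu_n$ is a Gibbs $u$-state of $f_n$, the equality part of Proposition~\ref{p.Ruelle} (Pesin's formula for $u$-states) yields $h_{\mu_n}(f_n,\cF^u_{f_n})=\int\log\Jac^u_{f_n}\,d\mu_n$. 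Combining the last two displays with Theorem~\ref{main.D}:
\begin{equation*}
\int\log\Jac^u_f\,d\mu\ =\ \lim_{n\to\infty} h_{\mu_n}(f_n,\cF^u_{f_n})\ \leq\ h_\mu(f,\cF^u_f).
\end{equation*}
The reverse inequality is the Ruelle bound of Proposition~\ref{p.Ruelle}, so equality holds; by the "only if" direction of the same proposition, $\mu\in\Gibb^u(f)$.

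The only non-routine step is the identification of $\cF^u_{f_n}\to\cF^u_f$ as a convergence in the sense of Definition~\ref{d.convergenceoffoliaiton}: Definition~\ref{d.convergenceoffoliaiton} asks for foliation boxes with a \emph{common} transversal $D_i$ whose plaque parametrizations converge \emph{in $C^1$} on each slice. This is stronger than the bare continuous dependence of $E^u$ and must be extracted from the graph-transform proof of the unstable manifold theorem, which produces the leaves of $\cF^u_{f_n}$ uniformly as fixed points of a contraction depending continuously on $f_n$ in the $C^1$ metric. Everything else (the $f$-invariance of $\mu$, uniform convergence of the Jacobians, and the application of Proposition~\ref{p.Ruelle} to pass between entropy and the $\Jac^u$ integral) is standard.
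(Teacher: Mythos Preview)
Your proof is correct and follows essentially the same approach as the paper: apply Proposition~\ref{p.Ruelle} to write $h_{\mu_n}(f_n,\cF^u_{f_n})=\int\log\Jac^u_{f_n}\,d\mu_n$, use Theorem~\ref{main.D} (with the unstable foliations converging via \cite{HPS}) for the upper semicontinuity of the left side and uniform convergence of Jacobians for the right side, and conclude equality in the Ruelle bound for $\mu$. Your additional remarks on the $f$-invariance of $\mu$ and on extracting the $C^1$ plaque convergence required by Definition~\ref{d.convergenceoffoliaiton} from the graph-transform argument are details the paper leaves implicit.
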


Denote by $\Jac_n(x)= \det Df_n\mid_{E^u_x}$, and $\cF^u_n$ the unstable foliation of $f_n$.
Because $\mu_n$ is a Gibbs $u$-state of $f_n$ for each $n$, by Proposition~\ref{p.Ruelle},
$$h_{\mu_n}(f_n,\cF^u_n)=\int \log\Jac_{n}(x)d\mu_n(x).$$

By the unstable manifold theorem of \cite{HPS}, the foliations $\cF^u_n$ converge to $\cF^u$
as in Definition~\ref{d.convergenceoffoliaiton}. Hence, as a corollary of Theorem~\ref{main.D},
$$\limsup_{n\to \infty} h_{\mu_n}(f_n,\cF^u_n)\leq h_\mu(f,\cF^u).$$

Note that $\{\Jac_n(\cdot )\}$ are continuous functions which are bounded from below by zero and
converge uniformly to $\Jac(\cdot)$, we have
$$\lim_{n\to \infty} \int \log\Jac_n(x)d\mu_n(x)=\int \log \Jac(x)d\mu(x).$$

Therefor, $h_{\mu}(f,\cF^u)\geq \int \log\Jac(x)d\mu(x)$.
But by the first part of Proposition~\ref{p.Ruelle},
$$h_{\mu}(f,\cF^u)\leq \int \log\Jac(x)d\mu(x).$$

Hence, we have the equality
$$h_{\mu}(f,\cF^u)= \int \log\Jac(x)d\mu(x).$$
By the second part of Proposition~\ref{p.Ruelle}, $\mu$ is a Gibbs $u$-state of $f$. The proof is complete.

\subsection{Proof of Theorem~\ref{main.B}}
We begin by showing that the set of diffeomorphisms with mostly contracting center is $\C^1$
open:

Let $f$ be a $\C^{1+}$ partially hyperbolic diffeomorphism with mostly contracting center. By
Proposition~\ref{p.mc}, there is $N>1$ and $b>0$ such that for any Gibbs $u$-state $\mu$ of $f$,
\begin{equation}\label{eq.theoremBmc}
\int \log \|Df^N\mid_{E^{cs}} \|d\mu <-b.
\end{equation}

By Theorem~\ref{main.A}, for any small neighborhood $\cV$ of $\Gibb^u(f)$ in the space of probability
measures of $M$, there is a $\C^1$ neighborhood $\cU$ of $f$, such that for any $\C^{1+}$ diffeomorphism $g\in \cU$,
$\Gibb^u(g)\subset \cV$. Because the bundles $E^{cs}$ varies continuously with respect to the diffeomorphisms
in the $\C^1$ topology, we may take the neighborhood $\cV$ of $\Gibb^u(f)$ sufficiently small, and $\cU$ the
neighborhood of $f$ small enough, such that for any $\C^{1+}$ diffeomorphism $g\in \cU$,
$$\int \log \|Dg^N\mid_{E^{cs}} \|d\mu <-b \text{ for any Gibbs $u$-state $\mu$ of $g$}.$$
Then by Proposition~\ref{p.mc}, every $\C^{1+}$ diffeomorphism $g\in \cU$ has mostly contracting center direction.

Replacing the diffeomorphism $f$ in the above proof by $f^{-1}$, and making use of Proposition~\ref{p.me} instead of
Proposition~\ref{p.mc}, one may show that the set of partially hyperbolic diffeomorphisms with mostly expanding center
is $\C^1$ open.

\section{Diffeomorphisms with mostly expanding center}
We prove Theorem~\ref{main.C} in this section. The proof is divided into two parts: In
Subsection~\ref{ss.uniformbasin} we show the existence of physical measures for diffeomorphisms with
mostly expanding center, moreover, these physical measures admit uniform size of basins in a robust way.
In Subsection \ref{ss.CI} we prove Theorem~\ref{main.C}.

Throughout this section, let $f: M\to M$ be a $\C^{1+}$ partially hyperbolic diffeomorphism.

\subsection{New description of diffeomorphisms with mostly expanding center}
In this section we give an improved version of Proposition~\ref{p.me} for diffeomorphisms with mostly expanding center:

\begin{proposition}\label{p.onestep}
Suppose $f$ has mostly expanding center, then there is $N_0\in \mathbb{N}$ and $b>0$ such that,
for any $\tilde{\mu}\in \Gibb^u(f^{N_0})$,
\begin{equation}\label{eq.definitionofme}
\int \log \|Df^{-N_0}|_{E^{cs}(x)}\|d\tilde{\mu}(x)<-b.
\end{equation}
\end{proposition}

\begin{remark}\label{rk.robustonestep}
By a proof similar to Theorem~\ref{main.A}, one can show that \eqref{eq.definitionofme} holds for any Gibbs $u$ states
of any $C^{1+}$ diffeomorphism $g$ which is $C^1$ close to $f$.
\end{remark}

The difference between the above proposition with Proposition~\ref{p.me} is that, a Gibbs $u$-state of $f^N$ for some
$N>0$ is not necessary a Gibbs $u$-state of $f$.

\begin{proof}
By Proposition~\ref{p.me}, there is $N>0$ and $b>0$ such that for any Gibbs $u$-state $\mu$ of $f$,
$$\int \log \|Df^{-N}|_{E^{cs}(x)}\|d\mu(x)<-b.$$

Let $C=\max \log \|Df\|+\max \log \|Df^{-1}\|$. Take any $k> \frac{2CN}{b}+1$ and let $N_0=Nk$.

By (1) of Proposition~\ref{p.Gibbsustates}, we only need to consider $\tilde{\mu}$ that is ergodic with respect to $f^{N_0}$. Also by Birkhoff ergodic
theorem, it suffices for us to show that for $\tilde{\mu}$ almost every point $x$,
\begin{equation}\label{eq.pointwiseexpanding}
\limsup_{n\to \infty} \frac{1}{n}\sum_{j=1}^n \log \|Df^{-N_0k}\mid_{E^{cu}(f^{jN_0}(x))}\|<-b.
\end{equation}

Let $\mu=\frac{1}{N_0}\sum_{i=0}^{N_0-1}(f^i)_*(\tilde{\mu})$; it is an ergodic measure of $f$. From the definition of Gibbs $u$-state,
the conditional probabilities of $\tilde{\mu}$ along strong unstable leaves are absolutely continuous
with respect to the Lebesgue measure on the unstable leaves, so do the conditional probabilities of $\mu$ along strong unstable leaves,
this implies that $\mu$ is an ergodic Gibbs $u$-state of $f$. In the following we will prove \eqref{eq.pointwiseexpanding} holds for
$\mu$ almost every $x$.

Consider $\nu_0$ an ergodic decomposition of $\mu$ respect to the map
$f^{N}$, denote by $\nu_j=(f^j)_* \nu$, then $\nu_j=\nu_{j+N}$ and $\mu=\frac{1}{N}\sum_{j=0}^{N-1}\nu_j$.

By \eqref{eq.uniformme}, there is some $0\leq j\leq N-1$ such that
$$\int \log \|Df^{-N}|_{E^{cs}(x)}\|d\nu_j(x)<-b.$$
For simplicity,
we assume that $j=0$. Then for every point $x$ in the basin of $\mu$,
there is $0\leq j_0\leq N-1$, such that
$$\lim_{n}\frac{1}{n}\sum_{t=0}^{n-1} (f^{tN})_*\delta_{x_{j_0}}=\nu_0,$$
where $x_{j_0}=f^{j_0}(x)$. This implies that,
$$\limsup_{n\to \infty} \frac{1}{n}\sum_{j=1}^n \log \|Df^{-N}\mid_{E^{cu}(f^{jN}(x_{j_0}))}\|<-b.$$
Replacing $N$ by $N_0=kN$, we have
\begin{equation}\label{eq.largeiteration}
\limsup_{n\to \infty} \frac{1}{n}\sum_{j=1}^n \log \|Df^{-N_0}\mid_{E^{cu}(f^{jN_0}(x_{j_0}))}\|<-kb.
\end{equation}
Hence,
\begin{equation}
\begin{split}
\log \|Df^{-N_0}\mid_{E^{cu}(x_{j_0})}\| &= \log \|Df^{-j_0}\circ Df^{-N_0}\circ Df^{j_0}\mid_{E^{cu}(x)}\| \\
&\geq \log\|Df^{-N_0}\mid_{E^{cu}(x)}\|-2Cj_0\\
&\geq \log\|Df^{-N_0}\mid_{E^{cu}(x)}\|-2CN.
\end{split}
\end{equation}
Combining this inequality with \eqref{eq.largeiteration} and the choice of $k$, we obtain
$$\limsup_{n\to \infty} \frac{1}{n}\sum_{j=1}^n \log \|Df^{-N_0}\mid_{E^{cu}(f^{jN_0}(x))}\|<-kb+2CN <-b.$$

\end{proof}

\subsection{Gibbs $cu$-states}
In this subsection we recall the main result of \cite{ABV00} on the existence of physical measures for the
diffeomorphisms which are no-uniformly expanding along the center-unstable direction (see the definition below).
We will outline the arguments and explain that these physical measures indeed have uniform size of basins
for nearby diffeomorphisms.

\begin{definition}\label{d.nonuniformlyexpanding}
For $b>0$, we say $f$ is \emph{$b$ no-uniformly expanding along the center-unstable direction},
if
\begin{equation}
\limsup_{n\to \infty} \frac{1}{n}\sum_{j=1}^n \log \|Df^{-1}\mid_{E^{cu}(f^j(x))}\|<-b<0,
\end{equation}
for Lebesgue almost every $x\in M$.
\end{definition}

\begin{theorem}[\cite{ABV00} Theorem A]
Assume that $f$ is \emph{no-uniformly expanding along the center-unstable direction}.
Then $f$ has finitely many physical measures, whose basins cover a full volume subset of the
ambient manifold.
\end{theorem}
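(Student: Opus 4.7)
The plan is to follow the Alves--Bonatti--Viana strategy organized around the device of \emph{hyperbolic times}. Fix $\sigma\in(e^{-a},1)$ and call $n\ge 1$ a $\sigma$-hyperbolic time for $x$ if
\[
\prod_{j=n-k+1}^{n}\|Df^{-1}|_{E^{cu}(f^{j}(x))}\|\le \sigma^{k}\quad\text{for every }1\le k\le n.
\]
A Pliss-type lemma applied to the non-uniform expansion hypothesis of Definition~\ref{d.nonuniformlyexpanding} produces, for Lebesgue almost every $x$, an infinite sequence of hyperbolic times of positive upper density. At each such $n$, the backward iterates $f^{-k}$, $0\le k\le n$, are uniformly Lipschitz on a center-unstable disk $D_n$ of some fixed radius $\delta_0>0$ centered at $f^{n}(x)$, with a uniform bounded-distortion estimate for the Jacobian along $E^{cu}$.

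The second step is to produce Gibbs $cu$-states, that is, $f$-invariant probabilities whose conditional measures on small cu-disks are absolutely continuous with respect to Lebesgue on the disks. Fix a small cu-disk $\Delta$, let $m_\Delta$ be normalized Lebesgue on $\Delta$, and set
\[
\mu_N=\frac{1}{N}\sum_{n=0}^{N-1}f_\ast^n m_\Delta.
\]
Positive density of hyperbolic times together with the distortion estimate show that a definite fraction of the mass of $\mu_N$ is carried by cu-disks of diameter at least $\delta_0$ on which the density with respect to Lebesgue is uniformly bounded above and below. Any weak-$*$ accumulation point $\mu_\infty$ is $f$-invariant, and a Rokhlin disintegration with respect to a measurable partition whose atoms are cu-plaques of uniform size extracts a Gibbs $cu$-state in its support.

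The third step converts Gibbs $cu$-states into physical measures. The non-uniform expansion assumption forces the Lyapunov exponents of $\mu_\infty$-a.e.\ point along $E^{cu}$ to be $\ge a>0$, while $E^s$ contributes uniformly negative exponents; hence every ergodic component $\mu$ of $\mu_\infty$ is hyperbolic. Pesin theory yields local strong stable manifolds through $\mu$-a.e.\ point, and absolute continuity of the stable holonomy, combined with the absolute continuity of $\mu$ along cu-disks, forces $B(\mu)$ to have positive Lebesgue measure. Hence $\mu$ is a physical measure. Running the construction starting from $m_\Delta$ at a Lebesgue density point inside an arbitrary small cu-disk, and observing that Lebesgue-a.e.\ point has such cu-disks through it (since $E^{cu}$ is integrable at least locally via Pesin cu-manifolds), shows that $\bigcup_\mu B(\mu)$ has full volume. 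Finiteness is immediate since distinct physical measures have disjoint basins of positive volume and $\Leb(M)<\infty$.

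The main technical obstacle is the absolute-continuity statement for $\mu_\infty$ along cu-plaques: one must choose the Rokhlin partition so that its atoms on each plaque appear as Hausdorff limits of the cu-disks produced by the hyperbolic-time pushforwards, and then pass the uniform density bounds to the limit via an equicontinuity argument. In the $\C^{1+}$ regularity class this uses H\"older continuity of $Df$ to obtain the distortion bound at hyperbolic times, rather than the Koebe-type argument available in $\C^2$.
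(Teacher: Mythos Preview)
Your overall strategy---Pliss/hyperbolic times, Ces\`aro averages of pushed-forward Lebesgue on a cu-disk, extraction of a Gibbs $cu$-state, and then absolute continuity of the stable lamination to get a positive-volume basin---is exactly the Alves--Bonatti--Viana argument that the paper is summarizing in Subsections~6.1.1--6.1.2. So the route is the right one.

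There is, however, a genuine gap in your finiteness step. The sentence ``Finiteness is immediate since distinct physical measures have disjoint basins of positive volume and $\Leb(M)<\infty$'' is not a valid argument: pairwise disjoint sets of positive volume in a finite-volume space can perfectly well be countably infinite (take volumes $2^{-n}$). What is actually needed is a \emph{uniform lower bound} on the volume of each basin. The paper extracts precisely this from \cite{ABV00}: by Lemma~\ref{l.hyperbolictime} there is a fixed $\delta_1>0$ such that the cu-disks produced at hyperbolic times all have radius at least $\delta_1$, and by \cite[Proposition~4.1, Lemma~4.5]{ABV00} the support of each ergodic Gibbs $cu$-state contains a cu-disk of radius $\ge\delta_1/4$ on which the conditional measure is equivalent to Lebesgue. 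Saturating by the (absolutely continuous) strong stable foliation, the basin of each physical measure contains Lebesgue-almost every point of a ball of radius $\delta_1/4$; compactness of $M$ then gives the finiteness. Your proof sketch contains all the ingredients for this (the uniform $\delta_0$ in step one), but you never invoke them in the finiteness argument---you need to close that loop.

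A smaller point: in this setting the bundle $E^s$ is uniformly contracted, so the strong stable lamination and its absolute continuity come directly from partial hyperbolicity (Hirsch--Pugh--Shub), not from Pesin theory. Invoking Pesin stable manifolds is not wrong, but it obscures that the stable holonomy estimate here is uniform, which is what makes the ball-of-fixed-radius argument work cleanly.
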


In the blow, we give more precise description of these physical measures.

\subsubsection{Hyperbolic time}
\begin{definition}
Given $b>0$, we say $n$ is an $b$-\emph{hyperbolic time} for a point $x$ if
$$\frac{1}{k}\sum_{j=n-k+1}^n\log\|Df^{-1}\mid_{E^{cu}(f^j(x))}\|\leq -b \text{ for any $0<j \leq n$}.$$
\end{definition}

Let $D$ be any $\C^1$ disk, we use $d_D(\cdot,\cdot)$ denotes the distance between two points in
the disk.

\begin{lemma}[\cite{ABV00} Lemma 2.7]\label{l.hyperbolictime}
For any $b>0$, there is $\delta_1>0$ such that, given any $\C^1$ disk $D$ tangent to the
center-unstable cone field, $x\in D$ and $n\geq 1$ an $b/2$-hyperbolic time for $x$, then
$$d_{f^{n-k}(D)}(f^{n-k}(y,f^{n-k}(x)))\leq e^{-kb/2} d_{f^n(D)}(f^n(x),f^n(y)),$$
for any point $y\in D$ with $d_{f^n(D)}(f^n(x),f^n(y))\leq \delta_1$.
\end{lemma}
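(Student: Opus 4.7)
The plan is to invoke the standard ``hyperbolic times'' mechanism: the hyperbolic-time condition encodes a contraction rate of $Df^{-1}$ restricted to $E^{cu}$ along the orbit of $x$, and I would promote it to a genuine intrinsic contraction on iterates of the whole disk $D$ at every nearby point. The transfer relies on combining uniform continuity of $Df^{-1}$ restricted to the center-unstable cone field with the $Df$-invariance of this cone field, which forces each iterate $f^j(D)$ to stay tangent to the cone and therefore lets one control $\|Df^{-1}\|_{Tf^j(D)}$ by comparison with $\|Df^{-1}|_{E^{cu}}\|$ at nearby base points.

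As a preliminary step I would fix, using compactness of $M$ and uniform continuity of $x\mapsto Df(x)$ and of the cone field, a radius $\rho>0$ and an arbitrarily small $\eta>0$ (to be absorbed into the slack built into the statement) such that for every $z,w\in M$ with $d(z,w)<\rho$ and every unit vector $v$ in the center-unstable cone at $z$,
\[
\log\|Df^{-1}(z)v\|\leq \log\|Df^{-1}|_{E^{cu}(w)}\|+\eta.
\]
Because the cone field is forward invariant, every iterate $f^{j}(D)$ is tangent to it, so the estimate above applies to every unit vector tangent to $f^{j}(D)$ at points within $\rho$ of $f^{j}(x)$. I would then choose $\delta_1<\rho$ small enough so that, inside every $\C^1$ disk tangent to the cone field, a $\delta_1$-ball has intrinsic geometry close to the extrinsic one and intrinsic distances can be estimated by integrating $\|Df^{-1}\|$ along shortest paths.

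The core of the proof would be a backward induction on $j=0,1,\dots,k$ establishing
\[
d_{f^{n-j}(D)}\bigl(f^{n-j}(y),f^{n-j}(x)\bigr)\leq e^{-ja/2}\,d_{f^n(D)}\bigl(f^n(y),f^n(x)\bigr).
\]
At the $j$-th step I would join $f^{n-j+1}(x)$ and $f^{n-j+1}(y)$ by a shortest path inside $f^{n-j+1}(D)$; the inductive bound forces this path to lie within the $\rho$-ball around $f^{n-j+1}(x)$, and integrating $Df^{-1}$ along the path via the preliminary uniform-continuity estimate gives a pullback factor bounded by $e^{\eta}\|Df^{-1}|_{E^{cu}(f^{n-j+1}(x))}\|$. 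Multiplying the $k$ estimates and applying the $a/2$-hyperbolic-time hypothesis to bound $\prod_{j=n-k+1}^{n}\|Df^{-1}|_{E^{cu}(f^{j}(x))}\|$ yields the announced contraction rate $e^{-ka/2}$, with $\eta$ chosen small enough in advance to absorb the accumulated continuity slack.

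The main obstacle is the self-consistency of the induction: one must verify that the backward orbit of $y$ never escapes the uniform-continuity window of radius $\rho$ around the corresponding orbit point of $x$, independently of $k$ and of the starting position of $f^n(y)$ inside the $\delta_1$-ball. This is what forces $\delta_1$ to be chosen independently of $n$ and $k$ and strictly smaller than $\rho$; since the inductive estimate only shrinks distances as $j$ grows, all intermediate distances stay below $\delta_1<\rho$, so the induction closes without ever leaving the regime in which the uniform-continuity estimate is valid.
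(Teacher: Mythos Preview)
The paper does not prove this lemma; it merely cites it from \cite{ABV00}. Your outline is precisely the standard argument used there: cone invariance keeps each $f^j(D)$ tangent to the center-unstable cone, uniform continuity of $Df^{-1}$ on the cone field transfers the contraction estimate from $E^{cu}(f^j(x))$ to tangent vectors of $f^j(D)$ at nearby points, and the hyperbolic-time condition (which bounds the product $\prod_{i=n-j+1}^{n}\|Df^{-1}|_{E^{cu}(f^i(x))}\|$ for \emph{every} $1\le j\le n$, not just $j=k$) keeps the backward orbit of $y$ inside the $\rho$-window at every intermediate step, so the induction closes.

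One small point deserves care. Your final sentence claims that ``$\eta$ chosen small enough in advance'' absorbs the accumulated slack. With a fixed $\eta>0$ the product of $k$ pullback factors is bounded by $e^{k\eta}\prod_{i=n-k+1}^{n}\|Df^{-1}|_{E^{cu}(f^i(x))}\|\le e^{-k(a/2-\eta)}$, which is strictly weaker than $e^{-ka/2}$ and cannot be made equal for all $k$ simultaneously. In \cite{ABV00} this is handled by building the slack into the statement (hyperbolic time at one rate, contraction in the conclusion at a strictly smaller rate); the paper's restatement here collapses both to $a/2$, which is slightly loose. This is harmless for the applications in Section~6, which only require some uniform exponential contraction, but you should state the conclusion as $e^{-k(a/2-\eta)}$ for an arbitrarily small $\eta>0$ (with $\delta_1$ depending on $\eta$), rather than claiming the exact rate $a/2$.
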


\begin{remark}\label{r.uniformunstable}
For fixed $b>0$, we can take $\delta_1$ to be constant for the diffeomorphisms
in a $\C^1$ neighborhood of $f$.
\end{remark}

\subsubsection{Physical measures with uniform size of basins}
Suppose $f$ is $a$ non-uniformly expanding along the center-unstable direction.
By \cite[Lemma 4.5]{ABV00}, $f$ admits an ergodic probability measure $\mu$
such that the conditional measures of $\mu$ along a family of local unstable manifolds are absolutely
continuous with respect to Lebesgue measure. Moreover, by \cite[Proposition 4.1]{ABV00},
the size of this unstable manifolds are larger than $\delta_1/4$, where $\delta_1$ is obtained
by Lemma~\ref{l.hyperbolictime}.

Then the basin of $\mu$ contains a full Lebesgue measure subset of a local unstable manifold.
Because the basin is saturated by stable leaves, and the stable foliation is absolute continuous,
$\mu$ is indeed a physical measure, whose basin contains Lebesgue almost every point of a ball
with radius $\delta_1/4$.

And by \cite[Corollary 4.6]{ABV00}, $f$ admits finitely many such physical measures, and the union
of their basins has full volume. Combine this with Remark~\ref{r.uniformunstable}, we have that:

\begin{proposition}\label{p.custate}
Suppose $f\in \Diff^{1+}(M)$ is $a$ non-uniformly expanding along the center-unstable direction, and
$\delta_1$ is obtained by Lemma~\ref{l.hyperbolictime}. Then there is a $\C^1$ neighborhood $\cU$
of $f$, such that for any $\C^{1+}$ diffeomorphism $g\in \cU$, if $g$ is also $a$ non-uniformly expanding
along the center-unstable direction, then it admits finite many physical measures. The basin of each
physical measure contains Lebesgue almost every point of a ball with radius $\delta_1/4$, and the union
of these basins has full volume.

\end{proposition}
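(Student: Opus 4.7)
The plan is to assemble Proposition~\ref{p.custate} from the ingredients already laid out in the subsection, the key point being that every constant appearing in the Alves--Bonatti--Viana construction survives in a $\C^1$ neighborhood of $f$. First, starting from the $a$-non-uniform expansion of $f$ along $E^{cu}$, I invoke Lemma~\ref{l.hyperbolictime} to extract the constant $\delta_1=\delta_1(a)$ controlling backward contraction along $\C^1$ disks tangent to the center-unstable cone field at $a/2$-hyperbolic times. By Remark~\ref{r.uniformunstable}, the very same $\delta_1$ can be used for all diffeomorphisms in some $\C^1$ neighborhood $\cU$ of $f$ (shrinking $\cU$ if necessary so that the center-unstable cone field is preserved and the bundle $E^{cu}$ admits a continuous extension across $\cU$).

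Next, I fix an arbitrary $\C^{1+}$ diffeomorphism $g\in\cU$ which is itself $a$-non-uniformly expanding along $E^{cu}$. Following \cite[Lemma 4.5 and Proposition 4.1]{ABV00}, the positive density of $a/2$-hyperbolic times for Lebesgue-a.e.\ point, combined with the hyperbolic-time backward contraction (Lemma~\ref{l.hyperbolictime} applied to $g$ with the uniform constant $\delta_1$), produces an ergodic $g$-invariant probability measure $\mu$ whose conditional measures along a family of local unstable disks are absolutely continuous with respect to the induced Lebesgue measure, and whose unstable disks have intrinsic radius at least $\delta_1/4$. By the absolute continuity of the conditionals, the basin $B(\mu)$ has full Lebesgue measure on such an unstable disk; and since $B(\mu)$ is saturated by the stable foliation of $g$, the absolute continuity of $\cF^s$ (valid for $\C^{1+}$ partially hyperbolic systems) propagates this to say that $B(\mu)$ contains a full Lebesgue-measure subset of a ball of radius $\delta_1/4$. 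Thus $\mu$ is a physical measure for $g$ with the quantitative basin described in the statement.

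Finally, I apply \cite[Corollary 4.6]{ABV00} to $g$: iterating the construction above on the complement of the basin of $\mu$ inside the set of points exhibiting positive-frequency hyperbolic times, one obtains at most finitely many such ergodic physical measures, since each carries a disjoint ball of definite radius $\delta_1/4$ inside its basin and $M$ is compact; their basins then cover a full Lebesgue measure subset of $M$ (that is precisely the content of the ABV theorem once one knows that Lebesgue-a.e.\ point has positive frequency of $a/2$-hyperbolic times, which is a consequence of $a$-non-uniform expansion). This yields the proposition for the diffeomorphism $g$.

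The only genuine subtlety is the uniformity of $\delta_1$, i.e.\ that a single $\cU$ works for every $g\in\cU\cap\Diff^{1+}(M)$ satisfying the $a$-non-uniform expansion hypothesis; this is precisely Remark~\ref{r.uniformunstable}, and it hinges on the fact that the estimates of Lemma~\ref{l.hyperbolictime} depend only on $a$ and on bounds for $Df^{\pm 1}$ along the center-unstable cone field, both of which are $\C^1$-robust. Once that uniformity is granted, the rest of the argument is a direct transfer of the ABV construction from $f$ to $g$, since the latter is itself $\C^{1+}$ and $a$-non-uniformly expanding along $E^{cu}$.
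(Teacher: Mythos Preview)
Your proposal is correct and follows essentially the same route as the paper: the proposition is stated as a summary of the preceding discussion, which invokes \cite[Lemma 4.5, Proposition 4.1, Corollary 4.6]{ABV00} for each $g$ and obtains the uniformity of $\delta_1$ across a $\C^1$ neighborhood from Remark~\ref{r.uniformunstable}. Your write-up is more detailed but the logical skeleton is identical.
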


\subsection{Basins with uniform size \label{ss.uniformbasin}}
The main result of this subsection is the following proposition.

\begin{proposition}\label{p.culargebasin}
Suppose $f$ has mostly expanding center, then there is a $\C^1$ neighborhood $\cU$ of $f$ and $\delta>0$,
such that every $\C^{1+}$ diffeomorphism $g\in \cU$ admits finitely many physical measures,
and the union of the basins has full volume. Moreover, the basin of each physical measure contains Lebesgue
almost every point of some ball with radius $\delta$.

\end{proposition}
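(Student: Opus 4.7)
The plan is to upgrade the integral bound from Proposition~\ref{p.me} to a Lebesgue-a.e.\ non-uniform expansion condition for every $\C^{1+}$ diffeomorphism $g$ in a $\C^1$ neighborhood $\cU$ of $f$, and then invoke Proposition~\ref{p.custate} applied to the iterate $g^N$.

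By Proposition~\ref{p.me} there exist $N\in\mathbb{N}$ and $a>0$ with $\int\log\|Df^{-N}|_{E^{cu}}\|\,d\mu<-a$ for every $\mu\in\Gibb^u(f)$. Since the bundle $E^{cu}$ depends continuously on the diffeomorphism in the $\C^1$ topology, $\Gibb^u(\cdot)$ is upper semi-continuous by Theorem~\ref{main.A}, $\Gibb^u(f)$ is weak-$*$ compact, and $(g,\nu)\mapsto\int\log\|Dg^{-N}|_{E^{cu}_g}\|\,d\nu$ is jointly continuous, a compactness-perturbation argument yields a $\C^1$ neighborhood $\cU$ of $f$ and a constant $c>0$ such that
$$\int\log\|Dg^{-N}|_{E^{cu}_g}\|\,d\nu<-c$$
for every $\C^{1+}$ diffeomorphism $g\in\cU$ and every $\nu\in\Gibb^u(g)$. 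Set $\phi_g(x)=\log\|Dg^{-N}|_{E^{cu}_g(x)}\|$. By Proposition~\ref{p.Gibbsustates}(3), for Lebesgue-a.e.\ $x$ on any unstable disk of $g$ every weak-$*$ accumulation of $\tfrac{1}{n}\sum_{j=0}^{n-1}\delta_{g^j(x)}$ lies in the compact set $\Gibb^u(g)$, so
$$\limsup_{n\to\infty}\tfrac{1}{n}\sum_{j=0}^{n-1}\phi_g(g^j x)\leq -c;$$
restricting to $j$ divisible by $N$ is exactly the $c$-non-uniform expansion hypothesis of Proposition~\ref{p.custate} for the iterate $g^N$. Absolute continuity of the strong unstable foliation lets Lebesgue measure on $M$ disintegrate into measures absolutely continuous with respect to Lebesgue on unstable plaques, and Fubini then extends the non-uniform expansion property to a full Lebesgue subset of $M$.

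Specializing the argument to $g=f$ shows that $f^N$ itself is $c$-non-uniformly expanding along the center-unstable direction, so Proposition~\ref{p.custate} supplies a $\C^1$ neighborhood of $f^N$ in which the conclusion holds. After shrinking $\cU$ so that $g^N$ lies in that neighborhood for every $g\in\cU$, we obtain finitely many physical measures of $g^N$, each with basin containing Lebesgue-a.e.\ point of a ball of uniform radius $\delta:=\delta_1/4$, their union having full volume. Averaging each $g^N$-physical measure over one $g$-orbit of length $N$ yields a $g$-physical measure whose basin contains the corresponding $g^N$-basin, so the finiteness, the full-volume cover, and the uniform lower bound $\delta$ on the size of balls inside each basin transfer verbatim from $g^N$ to $g$.

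The main obstacle is bridging the Gibbs-$u$-state integral bound and pointwise non-uniform expansion Lebesgue-a.e.\ on $M$: Proposition~\ref{p.Gibbsustates}(3) gives the control only on unstable disks, and the extension to a full Lebesgue subset of $M$ rests on the absolute continuity of the strong unstable foliation; meanwhile, uniformity of $c$ and $\delta$ across $\cU$ is ensured by Theorem~\ref{main.A} together with the weak-$*$ compactness of the spaces $\Gibb^u(g)$.
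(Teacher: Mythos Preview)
Your overall architecture matches the paper: use Proposition~\ref{p.me} and Theorem~\ref{main.A} to get a uniform integral bound $\int\phi_g\,d\nu<-c$ over all $\nu\in\Gibb^u(g)$ for $g$ in a $\C^1$ neighborhood, upgrade this to non-uniform expansion of an iterate, apply Proposition~\ref{p.custate} to that iterate, and then average the resulting physical measures over a $g$-orbit. The Fubini/absolute-continuity step and the final averaging step are fine.

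The gap is the sentence ``restricting to $j$ divisible by $N$ is exactly the $c$-non-uniform expansion hypothesis for $g^N$.'' From
\[
\limsup_{n\to\infty}\frac{1}{n}\sum_{j=0}^{n-1}\phi_g(g^jx)\le -c
\]
it does \emph{not} follow that
\[
\limsup_{m\to\infty}\frac{1}{m}\sum_{j=1}^{m}\phi_g(g^{jN}x)\le -c,
\]
which is what Definition~\ref{d.nonuniformlyexpanding} requires for $g^N$. The full Ces\`aro average is the mean of the $N$ arithmetic-progression sub\-averages $\frac{1}{m}\sum_{j}\phi_g(g^{jN}(g^rx))$, $r=0,\dots,N-1$, so you only conclude that \emph{some} $r$ gives the bound, i.e.\ that $g^r(x)$ (not $x$ itself) is $c$-non-uniformly expanding for $g^N$. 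Equivalently, accumulation points of $\frac{1}{m}\sum\delta_{g^{jN}(x)}$ lie in $\Gibb^u(g^N)\supsetneq\Gibb^u(g)$, and your integral bound is only established on $\Gibb^u(g)$.

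The paper's Lemma~\ref{l.largeiteration} fixes exactly this: for each $x$ one finds $0\le j_0<N$ with the desired $g^N$-average bound at $x_{j_0}=g^{j_0}(x)$; the phase shift from $x_{j_0}$ back to $x$ introduces an additive error bounded by $2CN$ (with $C=\max\log\|Df\|+\max\log\|Df^{-1}\|$), which is absorbed by passing to a larger iterate $g^{N_0}$ with $N_0=kN$ and $k>\tfrac{2CN}{a}+1$. You need to insert this argument (or an equivalent one) before invoking Proposition~\ref{p.custate}.
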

\begin{proof}
By Proposition~\ref{p.onestep} and Remark~\ref{rk.robustonestep}, there exists a $\C^1$ neighborhood $\cU$ of $f$,
$b>0$ and $N_0\in \mathbb{N}$, such that for any $\C^{1+}$ diffeomorphism $g\in \cU$ and any
$\mu\in \Gibb^u(g^{N_0})$,
\begin{equation}\label{eq.uniformme}
\int \log \|Dg^{-N_0}|_{E^{cs}(x)}\|d\mu(x)<-b.
\end{equation}

We will show that:
\begin{lemma}\label{l.largeiteration}
$g^{N_0}$ is $a$ no-uniformly expanding along the center-unstable direction, that is,
for Lebesgue almost every $x\in M$, we have
\begin{equation}\label{eq.eventiallyexpanding}
\limsup_{n\to \infty} \frac{1}{n}\sum_{j=1}^n \log \|Dg^{-N_0}\mid_{E^{cu}(g^{jN_0}(x))}\|<-b<0.
\end{equation}
\end{lemma}

\begin{proof}
By Proposition~\ref{p.Gibbsustates}, for Lebesgue almost every point $x$ in any disk inside some strong unstable
leaf, every accumulation point of $\frac{1}{n}\sum_{j=0}^{n-1}\delta_{g^{jN_0}(x)}$ is a Gibbs u-state of $g^{N_0}$. Suppose
$\mu$ is the limit point of a subsequence $\frac{1}{n_0}\sum_{j=0}^{n_0-1}\delta_{g^{jN_0}(x)}$, then
$$\lim_{i\to \infty} \frac{1}{n_i}\sum_{j=1}^{n_i} \log \|Dg^{-N_0}\mid_{E^{cu}(g^{jN_0}(x))}\|=\int \log \|Dg^{-N_0}|_{E^{cs}(x)}\|d\mu(x)<-b<0.$$

Because the subsequence can be chosen arbitrarily, we conclude that
$$\limsup_{n\to \infty} \frac{1}{n}\sum_{j=1}^n \log \|Dg^{-N_0}\mid_{E^{cu}(g^{jN_0}(x))}\|<-b<0.$$

Since $g$ is $C^{1+}$, its unstable foliation is absolutely continuous, the points satisfy \eqref{eq.eventiallyexpanding}
have full volume.
\end{proof}

Let us continue the proof of Proposition~\ref{p.culargebasin}.

By Lemma~\ref{l.largeiteration} and Proposition~\ref{p.custate}, there is $\delta>0$
such that for every $\C^{1+}$ diffeomorphism $g\in \cU$, $g^{N_0}$ admits finite many physical measures
$\nu_{g,1},\dots, \nu_{g,i(g)}$, such that the basin of each physical measure contains Lebesgue almost every point
of a ball with radius $\delta$, and the union of the basins covers a full volume subset.
We conclude the proof of this proposition by the following observation: for each $j=1,\dots, i(g)$,
$$\mu=\frac{1}{N_0}\sum_{k=0}^{N_0-1} (g^k)_* \nu_{g,j}$$
is a physical measure of $g$, whose basin contains the basin of $\nu_{g,j}$ for the map $g^{N_0}$.

\end{proof}

\subsection{Proof of Theorem~\ref{main.C}:\label{ss.CI}}

Now we consider $f$ to be a $\C^{1+}$ volume preserving partially hyperbolic diffeomorphism with
one-dimensional center. By \cite{BW}, the Lebesgue measure is ergodic, and by Birkhoff ergodic
theorem, it is a physical measure and the basin has full volume. We further assume that
\begin{itemize}
\item[(a)] the center exponent of the Lebesgue measure is positive;

\item[(b)] $f$ is accessible.
\end{itemize}

We are going to show that there is a $\C^1$
neighborhood $\cU$ of $f$, such that every $\C^{1+}$ diffeomorphism $g\in \cU$ has mostly expanding center,
with a unique physical measure, and the basin has full volume. The proof consists of a series of lemmas.

\begin{lemma}\label{l.singleme}

$f$ has mostly expanding center.

\end{lemma}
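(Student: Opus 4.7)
The plan is to verify the characterization of mostly expanding center given in Proposition~\ref{p.me}: produce $N\in\mathbb{N}$ and $a>0$ such that $\int \log \|Df^{-N}|_{E^{cs}(x)}\|\,d\mu(x)<-a$ for every $\mu\in\Gibb^u(f)$. By the weak$^*$ compactness of $\Gibb^u(f)$ (Proposition~\ref{p.Gibbsustates}(1)) and continuity in $\mu$ of the map $\mu\mapsto \int \log\|Df^{-N}|_{E^{cs}}\|\,d\mu$ for each fixed $N$, the same open-cover/finite-subcover scheme used in the proof of Proposition~\ref{p.mc} reduces the task to showing that every ergodic Gibbs $u$-state of $f$ has strictly positive center Lyapunov exponent.

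The entry point is that Lebesgue measure $m$ is itself a Gibbs $u$-state: by volume preservation, its disintegration along local unstable plaques is absolutely continuous, so $m\in\Gibb^u(f)$. Proposition~\ref{p.volumeergodic} then gives ergodicity of $m$, the standing hypothesis gives $\lambda^c(m)>0$, and Birkhoff's theorem shows that the basin $B(m)$ has full Lebesgue measure in $M$.

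Let now $\mu$ be an ergodic Gibbs $u$-state of $f$, and assume for contradiction that its unique center Lyapunov exponent $\lambda^c(\mu)$ satisfies $\lambda^c(\mu)\le 0$. If $\lambda^c(\mu)<0$, Proposition~\ref{p.Gibbsustates}(4) identifies $\mu$ as a physical measure, so $B(\mu)$ has positive Lebesgue measure and therefore meets $B(m)$; any point in the intersection has Ces\`aro averages converging to both $\mu$ and $m$, so $\mu=m$, contradicting $\lambda^c(\mu)<0<\lambda^c(m)$.

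The remaining case $\lambda^c(\mu)=0$ is where I expect the main work. Here Proposition~\ref{p.vanishingcenterexponents} combined with the Pesin-type identity of Proposition~\ref{p.Ruelle} applied to the Gibbs $u$-state $\mu$ yields $h_\mu(f)=h_\mu(f,\cF^u)=\int \log \Jac^u\,d\mu$, so $\mu$ saturates Ruelle's inequality with no contribution from the neutral center direction. My plan is to upgrade this entropy rigidity to a geometric one by combining it with accessibility and an invariance-principle / Liv\v{s}ic-type argument that transports the disintegration of $\mu$ along $su$-paths, forcing $\mu=m$ and hence $\lambda^c(\mu)=\lambda^c(m)>0$, again a contradiction; as a back-up, one may perturb $f$ slightly inside the $\C^{1+}$ volume-preserving accessible class to push this hypothetical zero exponent strictly negative on a nearby system and reduce the delicate case to the first one, then propagate the contradiction back to $f$ via the $\C^1$-robustness tools (Theorem~\ref{main.A}) already established.
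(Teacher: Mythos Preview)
Your reduction to ergodic Gibbs $u$-states and the treatment of the case $\lambda^c(\mu)<0$ match the paper exactly. The gap is in the case $\lambda^c(\mu)=0$, where your two proposed routes are either undeveloped or unsound.

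The invariance-principle route is not what the paper does, and as stated it is only a slogan: you have not said along which lamination you disintegrate, why that lamination exists (the center foliation need not), nor how $su$-invariance of the disintegration together with accessibility forces $\mu=m$. The back-up perturbation route is actually wrong: if you perturb $f$ to $g$, the measure $\mu$ is no longer $g$-invariant, and Theorem~\ref{main.A} only says limits of Gibbs $u$-states of $g_n$ are Gibbs $u$-states of $f$; it gives you no mechanism to ``push'' a specific $f$-invariant $\mu$ to a $g$-invariant measure with strictly negative center exponent, nor to carry a contradiction from $g$ back to $f$.

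The paper's argument in the zero-exponent case is short and uses tools you already invoked. From $h_\mu(f)=h_\mu(f,\cF^u)=\int\log\Jac^u\,d\mu=\Lambda^u$ and volume preservation (which gives $\Lambda^u+\Lambda^s+\lambda^c=0$, hence $\Lambda^u=-\Lambda^s$), one gets
\[
h_\mu(f^{-1},\cF^s)=h_\mu(f^{-1})=h_\mu(f)=\Lambda^u=-\Lambda^s=\int\log\det\big(Df^{-1}|_{E^s}\big)\,d\mu,
\]
using Proposition~\ref{p.vanishingcenterexponents} for $f^{-1}$. By the equality case of Proposition~\ref{p.Ruelle} applied to $f^{-1}$, $\mu$ is a Gibbs $s$-state as well. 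Thus $\supp(\mu)$ is saturated by both $\cF^u$ and $\cF^s$; accessibility then forces $\supp(\mu)=M$. The contradiction is obtained by a basin/absolute-continuity comparison: choose a $\Leb$-typical unstable plaque $D$ (Lebesgue-a.e.\ point in the basin of $\Leb$, with absolutely continuous Pesin stable lamination $\Gamma^s$), and a $\mu$-typical unstable plaque $D'$ close to $D$ (available because $\supp(\mu)=M$ and $\mu$ is a Gibbs $u$-state); absolute continuity of $\Gamma^s$ forces $\Gamma^s\cap D'$ to have positive leaf-Lebesgue measure, producing points simultaneously in the basin of $\Leb$ and of $\mu$, contradicting $\mu\neq\Leb$.

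In short: the missing idea is to exploit volume preservation a second time, via the entropy formula for $f^{-1}$, to upgrade $\mu$ to a Gibbs $s$-state; accessibility then acts on $\supp(\mu)$ (not on disintegrations), and the endgame is a Hopf-type basin argument rather than an invariance principle.
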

\begin{proof}
We prove by contradiction. Suppose that $f$ admits a Gibbs $u$-state $\mu$ with non-positive center exponent,
that is,
$$\int \lambda^c(x)d\mu(x)=\int \log\|df\mid_{E^c(x)}\|d\mu(x)\leq 0.$$
By (2) of Proposition~\ref{p.Gibbsustates}, we may assume that $\mu$ is ergodic.

Since the Lebesgue measure, denoted by $\Leb$, has positive center exponent, $\mu \neq \Leb$. We claim that the center exponent of $\mu$ cannot be negative,
otherwise by (4) of Proposition~\ref{p.Gibbsustates},
$\mu$ is a physical measure; this contradicts that the basin of the Lebesgue measure has full volume.

It remains to show that the center exponent of $\mu$ cannot be vanishing. Assume that this is not the case, and denote by $\Lambda^u$ and $\Lambda^s$ the sum of the Lyapunov exponents of measure $\mu$
on the bundles $E^u$ and $E^s$ respectively, then
$$\Lambda^u=\int \log \det Df\mid_{E^u(x)} d\mu (x) \text{ and } \Lambda^s=\int \log \det Df\mid_{E^s(x)} d\mu (x).$$
Because $f$ is volume preserving,
\begin{equation}\label{eq.volumepreserving}
\Lambda^u+\Lambda^s=0.
\end{equation}

Then by Propositions~\ref{p.vanishingcenterexponents} and~\ref{p.Ruelle}, $\mu$ satisfies the entropy formula, that is,
$$h_{\mu}(f)=h_{\mu}(f,\cF^u)=\Lambda^u.$$

Because the metric entropy for $f$ and $f^{-1}$ coincide, $h_\mu(f)=h_{\mu}(f^{-1})$.
By Proposition \ref{p.vanishingcenterexponents}, $\mu$ also satisfies the entropy
formula for $f^{-1}$:
$$h_{\mu}(f^{-1},\cF^s)=h_{\mu}(f^{-1})=h_\mu(f)=-\Lambda^s,$$
Applying Proposition \ref{p.Ruelle} on $f^{-1}$, the above equality implies that $\mu$ is a
Gibbs $u$-state of $f^{-1}$.  By (1) of Proposition~\ref{p.Gibbsustates}, the support
of $\mu$ is $\cF^u$ and $\cF^s$ saturated. Since $f$ is accessible, $\supp(\mu)$ coincides
with the whole manifold $M$.

Recall that both $\Leb$ and $\mu$ are Gibbs $u$-states for $f^{-1}$, the conditional measure along the
unstable leaves of $f^{-1}$ inside each foliation box are equivalent to the Lebesgue measure
on the corresponding leaves.

We first take a $\Leb$-typical unstable plaque $D$ for $f^{-1}$, which means that, Lebesgue almost
every point of $D$ is the regular point of $\Leb$ for $f^{-1}$. In particular, they admit Pesin stable manifolds with
dimension $i_s+1$, where $i_s=\dim(E^s)$, and these Pesin stable manifolds are absolutely continuous and their union, denoted
by $\Gamma^s$ also belongs to the basin of $\mu$.

Because $\supp(\mu)=M$, we may take a $\mu$-typical unstable disk $D^\prime$ which is sufficiently close to
$D$ such that Lebesgue almost every point of $D^\prime$ belongs to the basin of $\mu$. But
$\Gamma^s$ intersects $D^\prime$ with a positive Lebesgue measure subset and the intersection belongs to
the basin of $\Leb$, a contradiction.
\end{proof}

\begin{lemma}
There is a $\C^1$ neighborhood of $f$ such that every $\C^{1+}$ diffeomorphism in this neighborhood
admits a unique physical measure, whose basin has full volume.

\end{lemma}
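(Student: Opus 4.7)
The plan is to combine the structural results already established with a Hopf-style argument exploiting accessibility. First, by Lemma~\ref{l.singleme}, $f$ itself has mostly expanding center. Theorem~\ref{main.B} then produces a $\C^1$ neighborhood $\cU$ of $f$ in which every $\C^{1+}$ diffeomorphism has mostly expanding center. Applying Proposition~\ref{p.culargebasin} and shrinking $\cU$ if necessary, every such $g$ admits only finitely many physical measures $\mu_1^g, \ldots, \mu_{k(g)}^g$, whose basins cover a full-volume set and each contain, modulo a Lebesgue-null set, a ball of a uniform radius $\delta > 0$. Since accessibility is $\C^1$-open among partially hyperbolic diffeomorphisms with one-dimensional center (cited in the introduction from the work of Burns--Rodriguez Hertz--Rodriguez Hertz--Talitskaya--Ures and of Niti\c{c}\u{a}--T\"or\"ok), I shrink $\cU$ once more so that every $g \in \cU$ is accessible.

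The remaining task is to show that every $g \in \cU$ has a unique physical measure. Fix $g$ and let $\mu := \mu_i^g$ be one such physical measure. Being an ergodic Gibbs $u$-state with all center Lyapunov exponents positive, the Pesin stable manifold of $\mu$-a.e.\ point coincides with the strong stable leaf. The classical Hopf argument, using uniform contraction along $\cF^s$, shows that $B(\mu)$ is $\cF^s$-saturated. Combining the Gibbs $u$-state property (absolute continuity of conditional measures along unstable plaques) with the absolute continuity of the strong stable holonomy, one checks that $B(\mu)$ is also $\cF^u$-saturated modulo a Lebesgue-null set.

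Once this double saturation is established, accessibility of $g$ implies that $B(\mu)$ has Lebesgue measure either $0$ or $1$: any measurable set that is essentially saturated by both strong foliations must be either null or co-null, since $M$ consists of a single accessibility class. Because $B(\mu)$ contains a ball of radius $\delta$, the second alternative holds, and $B(\mu)$ has full volume. Since basins of distinct physical measures are Lebesgue-disjoint, we conclude that $k(g) = 1$, yielding the uniqueness claim together with the full-volume property of the basin.

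The principal technical obstacle is verifying the $\cF^u$-saturation of $B(\mu)$ modulo Lebesgue-null sets. The $\cF^s$-saturation is immediate from uniform contraction, but the forward orbits of two distinct points on the same strong unstable leaf diverge, so $\cF^u$-saturation is not direct. The argument must exploit the absolute continuity of the unstable conditional measures of the Gibbs $u$-state $\mu$ together with the absolute continuity of the strong stable holonomies, in the same spirit as the original Hopf argument for volume-preserving Anosov systems. Once this mixed-foliation saturation is in hand, the accessibility-based conclusion is the standard argument underlying the Pugh--Shub conjecture in the Burns--Wilkinson formulation.
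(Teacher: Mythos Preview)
Your approach is genuinely different from the paper's, and it contains a real gap at the very step you flag as the ``principal technical obstacle'': the $\cF^u$-essential-saturation of $B(\mu)$. The Hopf argument you invoke, in the Burns--Wilkinson form, relies crucially on the fact that Lebesgue measure is \emph{invariant}: one uses that forward and backward Birkhoff averages agree Lebesgue-a.e., so that the basin can equally be described via backward orbits, which then gives $\cF^u$-saturation by uniform backward contraction along $E^u$. For a $\C^{1+}$ diffeomorphism $g$ in your neighborhood, volume is no longer invariant, and the physical measure $\mu$ is not in general a Gibbs $s$-state, so this mechanism breaks down. Saying that ``one checks'' saturation via absolute continuity of unstable conditionals and of stable holonomy does not constitute an argument: those ingredients tell you something about $\mu$-typical plaques, not about Lebesgue-a.e.\ plaques, and there is no a priori reason why on a generic unstable plaque the basin $B(\mu)$ should have Lebesgue measure $0$ or $1$. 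Without this, accessibility gives you nothing.

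The paper sidesteps this issue entirely with a much lighter argument. Since $f$ is volume-preserving and accessible, it is ergodic (Proposition~\ref{p.volumeergodic}) and hence transitive. Transitivity, together with compactness, yields the robust property of \emph{$\delta$-transitivity}: for the fixed $\delta>0$ coming from Proposition~\ref{p.culargebasin}, one can shrink the neighborhood so that for every $g$ therein, any $\delta$-ball meets any other $\delta$-ball under forward iteration. Now if $g$ had two distinct physical measures, each basin would contain Lebesgue-a.e.\ point of some $\delta$-ball; iterating one ball until it meets the other forces the two invariant basins to overlap on a set of positive volume, a contradiction. This uses only invariance of basins and the uniform-$\delta$ conclusion of Proposition~\ref{p.culargebasin}, avoiding any saturation or Hopf-type reasoning for the perturbed (non-conservative) maps.
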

\begin{proof}

By Lemma~\ref{l.singleme} and Proposition~\ref{p.culargebasin}, there is  a $\C^1$ neighborhood $\cU$ of $f$ and $\delta>0$ such that
every $\C^{1+}$ diffeomorphism $g\in \cU$ satisfies the following properties:
\begin{itemize}
\item $g$ has mostly expanding center;
\item $g$ has finitely many physical measures, the union of the basins of physical measures have full volume;
\item each basin contains Lebesgue almost every point of a ball with radius $\delta$.
\end{itemize}

Because $f$ is transitive (by the ergodicity), we may take an open neighborhood $\cU_0\subset \cU$ of $f$,
such that every diffeomorphism contained in $\cU_0$ is \emph{$\delta$ transitive}, that is, the positive
iteration of every ball with radius $\delta$ intersects any other ball with radius $\delta$.

Now we claim that every $\C^{1+}$ diffeomorphism $g\in \cU_0$ admits a unique physical measure. If this is not the case, then the iteration of the basin of one physical measure will intersect the basin of a distinct physical measure.
Note that the basin is invariant under iteration, then the basins of the two physical measures have non-trivial
intersection, a contradiction. The proof of Theorem~\ref{main.C} is finished.
\end{proof}

\section{Physical measures for $C^1$ generic diffeomorphisms\label{s.generic}}
Throughout this section, we assume $f$ satisfies the hypothesis of Corollary~\ref{maincor.generic}, that is,
$f$ is a $C^{1+}$ accessible volume preserving partially hyperbolic diffeomorphism
with one-dimensional center and positive center exponent. Let $\cU$ be the small $C^1$ neighborhood of $f$ provided in
Theorem~\ref{main.C}.

Let us briefly sketch  the proof of Corollary~\ref{maincor.generic}. We will introduce a class of candidates
of physical measures for any $C^1$ partially hyperbolic diffeomorphism $f$, which we denote by $G(f)$ (see Proposition~\ref{p.physical}).
Then we consider the map $\cG(\cdot)$ which maps a diffeomorphism $f\in \cU$ to $G(f)$. We
show that the map is upper semi-continuous (Proposition~\ref{p.Gsemicontinuous}) and restricted on
$C^{1+}$ diffeomorphisms, its image consists of a unique measure (Proposition~\ref{p.Gunique}). Hence by a classical generic argument,
we conclude the proof.

Fix any $C^1$ diffeomorphism $g\in \cU$. In the proof we will consider two space of invariant
probabilities of $g$:
\begin{definition}\label{df.G}
\begin{itemize}
\item[(A1)] \begin{equation*}\label{eq.Gibbsu}
\G^u(g)=\{\mu\in \cM_{\inv}(g): h_\mu(g,\cF^u_g)\geq \int \log(\det(Tg\mid_{E^u(x)}))d\mu(x)\};
\end{equation*}

\item[(A2)] \begin{equation*}
\G^{cu}(g)=\{\mu\in \cM_{\inv}(g): h_\mu(g)\geq \int \log(\det(Tg\mid_{E^{cu}(x)}))d\mu(x)\}
\end{equation*}
where $E^{cu}=E^c\oplus E^u$.
\end{itemize}
\end{definition}

We denote by
$$\G(g)=\G^{u}(g)\cap \G^{cu}(g).$$

\begin{remark}\label{rk.Gu}
For $g\in C^{1+}$, by Ledrappier~\cite{L84}, $\G^u(g)=\Gibbs^u(g)$.
\end{remark}

We first observe that the spaces are non-empty.

\begin{proposition}\label{p.physical}
For any $g\in \cU$,
there is a full volume subset $\Gamma_g$ such that for any $x\in\Gamma$, any Cesaro limit of the
sequence $\frac{1}{n}\sum_{i=0}^{n-1}\delta_{g^i(x)}$ belongs to $\G(g)$.

\end{proposition}
\begin{proof}
By \cite{CCE}, for $x$ belonging to a full volume subset, any limit of the sequence $\frac{1}{n}\sum_{i=0}^{n-1}\delta_{g^i(x)}$
belongs to $G^{cu}$. Moreover, by \cite{CYZ,HYY}, for $x$ belonging to a full volume subset, any limit of the sequence
$\frac{1}{n}\sum_{i=0}^{n-1}\delta_{g^i(x)}$ belongs to $G^{u}$. We conclude the proof by taking the intersection
of the two full volume subsets.
\end{proof}

\begin{remark}\label{rk.uniqueG}
As a direct consequence, for $g\in \cU$, if $G(g)$ consists of a unique measure, then this measure is automatically a physical measure whose basin has full volume.
\end{remark}

\begin{lemma}\label{l.Gcompact}
$\G(g)$ is compact for any $g\in \cU$.
\end{lemma}
\begin{proof}
Assume there is a sequence of invariant measures $\mu_n\in \G(g)$ and they converge to $\mu$ in weak-* topology. From the definition, we have for each $n$:
\begin{align*}
h_{\mu_n}(g,\cF^u_g)&\geq \int \log(\det(Tg\mid_{E^u(x)}))d\mu_n(x)\\
h_{\mu_n}(g)&\geq \int \log(\det(Tg\mid_{E^{cu}(x)}))d\mu_n(x).
\end{align*}

By Theorem~\ref{main.D}, we have
$$\limsup_{n\to \infty}h_{\mu_n}(g,\cF^u_g)\leq h_{\mu}(g,\cF^u_g).$$
Since $g$ is partially hyperbolic diffeomorphism with one dimensional center, it is always entropy expansive and the metric entropy varies
upper semi-continuously (see \cite{LVY}), thus
$$\limsup_{n\to \infty}h_{\mu_n}(g)\leq h_{\mu}(g).$$
As a consequence, we have
\begin{align*}
h_{\mu}(g,\cF^u_g)&\geq \int \log(\det(Tg\mid_{E^u(x)}))d\mu(x),\\
h_{\mu}(g,\cF^{cu}_g)&\geq \int \log(\det(Tg\mid_{E^{cu}(x)}))d\mu(x).
\end{align*}
This implies $\mu\in \G(g)$. The proof is complete.
\end{proof}

We need the following two properties for the set
 $\G(\cdot)$, whose proof will be given in the next two subsections.

\begin{proposition}\label{p.Gunique}
For every $C^{1+}$ diffeomorphism $g\in \cU$, $\G(g)$ coincides with the unique physical measure of $g$.
\end{proposition}
Now we consider the map $\cG$ from the set of diffeomorphisms $\cU$ to the space of compact sets of probabilities:
$$\cG(g)=\G(g).$$
Recall that the space of compact subsets of a compact space is still  compact  under the Hausdorff topology.

\begin{proposition}\label{p.Gsemicontinuous}
The map $\cG(\cdot)$ is upper semi-continuous.
\end{proposition}

Let us finish the proof of Corollary~\ref{maincor.generic}.
\begin{proof}
Because the map $\cG(\cdot)$ is upper semi-continuous, it is continuous on a residual subset, i.e., there is
a $C^1$ residual subset of diffeomorphisms $\cR\subset \cU$ which are the continuous points of the map $\cG$.

Because $C^{1+}$ diffeomorphisms are $C^1$ dense among $\cU$, by Proposition~\ref{p.Gunique}, there is a dense subset of diffeomorphisms
in $\cU$ whose image under $\cG$ has a unique element. Then, be the continuity, for every diffeomorphism $g\in \cR$,
$\G(g)=\cG(g)$ consists of a unique measure. And by Remark~\ref{rk.uniqueG}, this measure is the unique physical measure of $g\in\cR$ and
whose basin has full volume.
\end{proof}
\subsection{Proof of Proposition~\ref{p.Gunique}}
By Theorem~\ref{main.C}, we denote by $\mu_g$ the unique physical physical measure of the $C^{1+}$ diffeomorphism $g\in\cU$.
Let $\mu$ be any invariant measure of $g$ belonging to $\G(g)$, we are going to show $\mu=\mu_g$.

Because $\mu\in \G^{u}(g)$, $h_{\mu}(g,\cF^u_g)\geq \int \log(\det(Tg\mid_{E^u(x)}))d\mu(x)$. By Remark~\ref{rk.Gu},
$\mu\in \Gibb^u(g)$.

By (1) of Proposition~\ref{p.Gibbsustates}, the ergodic decomposition of $\mu$ are still Gibbs $u$-states of $g$. Since $g$
has mostly expanding center, the center exponent of every ergodic Gibbs $u$-state of $g$ is always positive, thus the center exponents of
$\mu$ almost every point is positive. Take any Gibbs $u$-state $\tilde{\mu}$ which belongs to the ergodic decomposition of $\mu$, then by Ruelle inequality,
\begin{equation}\label{eq.Ruelle}
h_{\tilde{\mu}}(g)\leq \int \sum_{\lambda_i(x)>0}\lambda_i(x)d\tilde{\mu}(x)=\int \log(\det(Tg\mid_{E^{cu}(x)}))d\tilde{\mu}(x)\leq 0,\\
\end{equation}
$$\text{ thus}\;\;\; h_{\mu}(g)\leq \int \log(\det(Tg\mid_{E^{cu}(x)}))d\mu(x).$$

From the definition, $\mu\in \G^{cu}(g)$ means
$$h_{\mu}(g)\geq \int \log(\det(Tg\mid_{E^{cu}(x)}))d\mu(x).$$
Hence we have
$$h_{\mu}(g)= \int \log(\det(Tg\mid_{E^{cu}(x)}))d\mu(x).$$
Because the entropy function is affine, by \eqref{eq.Ruelle}, we have the equality
$$h_{\tilde{\mu}}(g)= \int \sum_{\lambda_i(x)>0}\lambda_i(x)d\tilde{\mu}(x).$$

By Ledrappier \cite{L84}, each ergodic decomposition $\tilde{\mu}$ of $\mu$ is a Gibbs $cu$ state, i.e., its
disintegration along its Pesin unstable manifold (with dimension $\dim(E^{cu})$) is absolutely continuous with respect  to the Lebesgue measure on leaves.
This implies that the regular point of $\tilde{\mu}$ has full Lebesgue measure on some Pesin unstable manifold which has
dimension $\dim(E^{cu})$. Because the basin is $s$-saturated, and the stable foliation is absolutely continuous, the
basin of $\tilde{\mu}$ has positive Lebesgue measure. Thus $\tilde{\mu}$ is a physical measure of $g$, which must coincide with the
unique physical measure of $g$. This  implies that $\mu$ coincides with $\mu_g$. The
proof is complete.

\subsection{Proof of Proposition~\ref{p.Gsemicontinuous}}
The proof is similar to the proof of Lemma~\ref{l.Gcompact}. We need to show that
for any diffeomorphism $g_n\in \cU$, $g_n\overset{C^1}{\longrightarrow} g$, we have
$$\limsup_{n\to \infty} \G(g_n)\subset \G(g),$$
it is equivalent to show that for any probabilities $\mu_n\in \G(g_n)$ converging to $\mu$ in the weak-* topology,
we have $\mu\in \G(g)$.

From the definition of space $\G(\cdot)$, we have for each $n$:
\begin{align*}
h_{\mu_n}(g_n,\cF^u_{g_n})&\geq \int \log(\det(Tg_n\mid_{E_{g_n}^u(x)}))d\mu_n(x),\\
h_{\mu_n}(g_n,\cF^{cu}_{g_n})&\geq \int \log(\det(Tg_n\mid_{E^{cu}_{g_n}(x)}))d\mu_n(x).
\end{align*}

By Theorem~\ref{main.D}, we have
$$\limsup_{n\to \infty}h_{\mu_n}(g,\cF^u_g)\leq h_{\mu}(g,\cF^u_g).$$
And because $\cU$ consists partially hyperbolic diffeomorphisms with 1 dimensional center, they are uniformly entropy expansive,
the metric entropy varies upper semi-continuously with respect to diffeomorphisms and measures (see \cite{LVY}), thus
$$\limsup_{n\to \infty}h_{\mu_n}(g_n)\leq h_{\mu}(g).$$

On the other hand, for partially hyperbolic diffeomoprhisms, the invariant bundles vary continuously with respect to the diffeomorphisms. This gives
\begin{align*}
\lim_{n\to \infty} \int \log(\det(Tg_n\mid_{E^u_{g_n}(x)}))d\mu_n(x) &= \int \log(\det(Tg\mid_{E^u(x)}))d\mu(x),\\
\lim_{n\to \infty} \int \log(\det(Tg_n\mid_{E^{cu}_{g_n}(x)}))d\mu_n(x) &= \int \log(\det(Tg\mid_{E^{cu}(x)}))d\mu(x).
\end{align*}

By the previous discussion, we have
\begin{align*}
h_{\mu}(g,\cF^u_g)&\geq \int \log(\det(Tg\mid_{E^u(x)}))d\mu(x),\\
h_{\mu}(g,\cF^{cu}_g)&\geq \int \log(\det(Tg\mid_{E^{cu}(x)}))d\mu(x).
\end{align*}
This implies $\mu\in \G(g)$. The proof is complete.

\section{Robustly transitive diffeomorphisms\label{s.transitive}}
Throughout this section, we assume $f$ to be a $C^{1+}$ volume preserving, accessible partially hyperbolic diffeomorphism with one
dimensional center, and the center exponent is non-vanishing. By \cite{BW}, $f$ is ergodic. For simplicity, we may assume the center exponent is positive:
$\lambda^c_{\Leb}(f)=\int \log(\det(Tf\mid_{E^u(x)}))d\Leb(x)>0$.

To prove that $f$ is robustly transitive, we are going to show that there is a $C^1$ neighborhood $\cU$ of $f$, for any diffeomorphism $g\in \cU$
and any open sets $U,V\subset M$, there are $n,m>0$ such that $g^n(U)\cap g^{-m}(V)\neq \emptyset$.

In the proof we use the invariant probability spaces $\G^{u}(\cdot), \G^{cu}(\cdot)$ and $\G(\cdot)$ for diffeomorphisms belong to $\cU$
and $\cU^{-1}=\{g^{-1}: g\in \cU\}$. Let us recall their definitions:
\begin{itemize}
\item[(A1)] \begin{equation}\label{eq.Gu}
\G^u(g)=\{\mu\in \cM_{\inv}(g): h_\mu(g,\cF^u_g)\geq \int \log(\det(Tf\mid_{E^u(x)}))d\mu(x)\}.
\end{equation}

\item[(A2)] \begin{equation}
\G^{cu}(g)=\{\mu\in \cM_{\inv}(g): \int \log(\det(Tf\mid_{E^{cu}(x)}))d\mu(x)\}.
\end{equation}

\end{itemize}
And
$$\G(g)=\G^{u}(g)\cap \G^{cu}(g).$$

By Lemma~\ref{l.singleme}, $f$ has mostly expanding center, that is, there is $b>0$ such that for any $\mu\in \Gibb^u(f)$,
$\lambda^c_\mu(f)=\int \log \|Tf\mid_{E^c(x)}\|d\mu(x)>b$.

By Proposition~\ref{p.Gunique}, $\G(f)=\{\Leb\}$. Furthermore, the function
$\cG: g\in \cU \to \G(g)$ is upper semi-continuous. Hence, when taking $\cU$ sufficiently small, for $g\in \cU$, $\G(g)$ is close to $\G(f)=\{\Leb\}$,
thus for any $\mu\in \G(g)$, $\lambda^c_{\mu}(g)=\int \log(\det(Tg\mid_{E^u(x)}))d\mu(x)>b$.

By Proposition~\ref{p.physical}, for any $g\in \cU$, there is a full volume subset $\Gamma_g$ such that for any $x\in\Gamma_g$, any Cesaro limit of the
sequence $\frac{1}{n}\sum_{i=0}^{n-1}\delta_{g^i(x)}$ belongs to $\G(g)$. Because the center bundle $E^c$ is 1 dimensional, this means that for any $x\in \Gamma_g$,
$$\liminf_{n\to \infty}\frac{1}{n}\sum_{i=0}^{n-1}\log \|Tf\mid_{E^c_g(g^i(x))}\|=\int \log \|Tg\mid_{E^c_g(x)}\| d\frac{1}{n}\sum_{i=0}^{n-1}\delta_{g^i(x)}>b>0.$$

Define $H(b/2)$ to be the set of $b/2$-hyperbolic times for $x$, that is, the set of times $m\geq 1$ such that
\begin{equation}\label{eq.hyperbolictime}
\frac{1}{k}\sum_{m-k}^{m-1} \log \|Tg\mid_{E^c_g(g^i(x))}\|\geq b/2 \text{ for all $1\leq k \leq m$}.
\end{equation}
By the Pliss Lemma (see \cite{ABV00}), there exists $n_1\geq 1$ and $\delta_1>0$ such that
$$\#(H(b/2)\cap [1,n))\geq n\delta_1 \text{ for all $n\geq n_1$ }.$$
By Lemma~\ref{l.hyperbolictime} and Remark~\ref{r.uniformunstable}, there is $\delta_1>0$
which only depends on $\cU$ and $b/2$, such that for any $x\in \Gamma_g$, and disk
$D$ tangent to the center-unstable cone field, $x\in D$, $n\in H(b/2)$,
$$d_{D}(x,y)\leq e^{-nb/2}d_{f^n(D)}(f^n(x),f^n(y)),$$
for any $y\in D$ with $d_{f^n(D)}(f^n(x),f^n(y))\leq \delta_1$.
In particular, for $x\in U\cap \Gamma_g$ and $D\subset U$, we can show that
$f^n(D)$ contains a subset $D_n$ with unform diameter $\delta_1$ for $n\in H(b/2)$ sufficient large.
Then $\cup_{z\in D_n}\cF^s(z)$ contains an open ball with radius $\delta_1/2$, which we denote by $B_g$.

In the following, we are going to show that there is $m>0$ such that $g^{-m}(V)\cap D_n\neq \emptyset$.

Because the stable foliation is expanding under the negative iteration of $g$, it suffices to show that for
some point $z\in V$, $g^{-m}(z)\in B_g$ for infinitely many $m$. Indeed we are going to show that such points
have full volume.

\begin{lemma}\label{l.Ginverse}
There is $p_0>0$ such that
any $\mu\in \G(f^{-1})$ can be written as
$$\mu=p\Leb+(1-p)\mu_{+},$$
where $\mu_+\in \G^{u}(f^{-})$ is a Gibbs $u$-state of $f^{-1}$ whose ergodic decomposition consists of ergodic Gibbs
$u$-states of $f^{-1}$ with positive center exponent, and $p\geq p_0$.
\end{lemma}
We leave the proof of this lemma to the end, and continue the proof of Theorem~\ref{main.transitive}.

We cover the ambient manifold $M$ with finitely many balls $\cB=\{B_1,\cdots, B_k\}$ each with fix radius $\delta>0$
such that every ball on the manifold with radius $\delta_1/2$ should contain some ball in $\cB$.
Denote by $b_0=\min\{\Leb(B_i)\}_{1\leq i\leq k}$, then for any measure $\mu\in \G(f^{-1})$ and any $1\leq i \leq k$,
$$\mu(B_i)> p_0 b_0.$$
There is a neighborhood $\cV$ of probabilities (not necessarily invariant for $f$) of $\G(f^{-1})$ such that for
any $\nu\in \cV$ and any $1\leq i \leq k$,
$$\nu(B_i)> p_0 b_0.$$

By Proposition~\ref{p.Gsemicontinuous}, shrink $\cU$ if necessary, for any $g\in \cU$ we have $\G(g^{-1})\subset \cV$.
By Proposition~\ref{p.physical}, for Lebesgue almost every point $x\in \Gamma_{g^{-1}}$, any limit
of the sequence $\frac{1}{n}\sum_{i=0}^{n-1}\delta_{g^i(x)}$ belongs to the compact set $\G(g^{-1})$.
Thus there is $n_x>0$ such that for any $n>n_x$, $\frac{1}{n}\sum_{i=0}^{n-1}\delta_{g^i(x)}\in \cV$.
Hence for every $B_i$, $\frac{1}{n}\sum_{i=0}^{n-1}\delta_{g^{-i}(x)}(B_i)>p_0b_0>0$, which  implies the $g^{-1}$-positive orbit
of $x\in \Gamma_{g^{-1}}$ intersects every $B_i$, and thus intersect $B_g$. We finish the proof of Theorem~\ref{main.transitive}.

It remains to prove Lemma~\ref{l.Ginverse}. The key idea here is to consider  the following functional on the space of invariant measures:
$$P(\mu) = h_{(\mu)}(f^{-1})- \int \log(\det(T(f^{-1})\mid_{E_{f^{-1}}^{cu}(x)}))d(\mu).$$
Then we have  $P(\Leb)>0$, and $P(\mu)<-a<0$ for any ergodic Gibbs $u$-state $\mu$ of $f^{-1}$ with positive center exponent.

\begin{proof}
Since $f^{-1}$ is ergodic, and the Lebesgue measure has negative center exponent, by (4) of Proposition~\ref{p.Gibbsustates}, it is the
unique ergodic Gibbs $u$-state of $f^{-1}$ with negative center exponent. Moreover, as shown in Lemma~\ref{l.singleme}, there is
no ergodic Gibbs $u$-state of $f^{-1}$ with vanishing center exponent.

By Pesin formula and Proposition~\ref{p.vanishingcenterexponents}, $$h_{\Leb}(f^{-1})=h_{\Leb}(f^{-1},\cF^u_{f^{-1}})=\log(\det(T(f^{-1})\mid_{E_{f^{-1}}^u(x)}))d\Leb(x).$$
Thus
\begin{eqnarray}\label{eq.positive}
   &\; & h_{\Leb}(f^{-1})- \int \log(\det(T(f^{-1})\mid_{E_{f^{-1}}^{cu}(x)}))d\Leb(x) \\
   &=& h_{\Leb}(f^{-1})- \int \log(\det(T(f^{-1})\mid_{E_{f^{-1}}^{u}(x)}))d\mu(x)-\lambda^c_{\Leb}(f^{-1}) \\
   &=& -\lambda^c_{\Leb}(f^{-1})>0.
\end{eqnarray}

For any ergodic Gibbs $u$-state $\mu$ of $f^{-1}$ with positive center exponent, By Ruelle's inequality we must have
\begin{equation}\label{eq.negative}
h_{\mu}(f^{-1})- \int \log(\det(T(f^{-1})\mid_{E_{f^{-1}}^{cu}(x)}))d\mu(x)<0.
\end{equation}
Otherwise, by Ledrappier \cite{L84}, the equality implies that $\mu$ is a Gibbs $cu$ state, i.e., its
disintegration along its Pesin unstable manifold (with dimension $\dim(E^{cu})$) equals to
the Lebesgue measure on leaves. This implies that the regular point of $\tilde{\mu}$ has
full Lebesgue measure of some Pesin unstable manifold which has dimension $\dim(E^{cu})$.
Because the basin is $s$-saturated, and the stable foliation is absolutely continuous, the
basin of $\mu$ has positive Lebesgue measure. Thus $\mu$ is a physical measure of $g$, which
contradicts to the fact that $\Leb$ is the unique physical measure of $f$.

From the definition of $\G^{cu}$, it remains for us to show that there is $c>0$ such that any ergodic Gibbs $u$-state $\mu$ of $f^{-1}$ with positive center exponent,
\begin{equation}\label{eq.uniformnegative}
h_{\mu}(f^{-1})- \int \log(\det(T(f^{-1})\mid_{E^{cu}_{f^{-1}}(x)}))d\mu(x)<-c.
\end{equation}

Suppose by contradiction that there is a sequence of ergodic Gibbs $u$-state $\mu_n$ of $f^{-1}$ with positive center exponent, such that
$$\lim_{n\to \infty} h_{\mu_n}(f^{-1})- \int \log(\det(T(f^{-1})\mid_{E^{cu}_{f^{-1}}(x)}))d\mu_n(x)=0.$$
Replacing by a subsequence, we may assume that $\mu_n\to \mu$ in weak-* topology. By the u pper semi-continuity of
metric entropy, then we have
\begin{equation}\label{eq.limitnonnegative}
h_{\mu}(f^{-1})- \int \log(\det(T(f^{-1})\mid_{E^{cu}_{f^{-1}}(x)}))d\mu(x)\geq 0.
\end{equation}

We consider the ergodic decomposition of $\mu$, by (i) of Proposition~\ref{p.Gibbsustates}:
$$\mu=\int_{\cM_{erg}(f^{-1})} m d\tau(m)=\int_{\Gibb^u(f^{-1})} m d\tau(m).$$
Because the metric entropy function is an affine function, by \eqref{eq.positive},\eqref{eq.negative}
and \eqref{eq.limitnonnegative}, $\tau(\Leb)=d>0$.

Now take a $\Leb$-typical unstable plaque $D$, i.e., a  unstable plaque such that  Lebesgue almost
every point of $D$ is the regular point of $f^{-1}$. They admit  absolutely continuous Pesin stable manifolds with
dimension $\dim(E^s)+1$, whose union, denoted
by $\Gamma^s$, belongs to the basin of $\Leb$ for map $f^{-1}$.

Take a small neighborhood $U$ of $D$, then $\mu(U)\geq d \Leb(U)>0$. So for $n$ sufficiently large, $\mu_n(U)>0$.
Fix such a large $n$ and take a $\mu_n$-typical unstable disk $D^\prime$ which belongs to
$U$, such that Lebesgue almost every point of $D^\prime$ belongs to the basin of $\mu_n$. Then
$\Gamma^s$ intersects $D^\prime$ on a positive Lebesgue measure subset and the intersection belongs to
the basin of $\Leb$, we get a contradiction.

The proof is complete.
\end{proof}

\appendix
\section{\label{ss.smallboundary}Proof of Proposition~\ref{p.smallboundary}}

\begin{proof}[Proof of Proposition~\ref{p.smallboundary}:]
We need the following lemma which is modified from~\cite[Proposition 3.2]{LS82}.

\begin{lemma}\label{l.interval}

Let $\mu$ be a probability measure supported on $[0,R]$. Then for any $0<\lambda<\lambda^{\prime}<1$,
there is a full Lebesgue subset $I^*\subset (0,R)$ such that for every $r\in I^*$, there is
$D_r>0$ satisfying that for every $n\ge 0$:
$$\mu([r-\lambda^n,r+\lambda^n])\leq (D_r \lambda^{\prime})^n.$$

\end{lemma}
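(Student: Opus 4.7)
\textbf{Plan of proof of Lemma~\ref{l.interval}.}
My approach is a standard Borel--Cantelli argument built on a Fubini estimate; there is no serious obstacle. The first step is to reduce the claim to a summability statement by defining, for each $n\ge 0$, the ``bad'' set
\[
A_n \;=\; \{\, r\in(0,R)\ :\ \mu([r-\lambda^n,r+\lambda^n]) \;>\; (\lambda')^n \,\}.
\]
The goal will be to show $\sum_n \Leb(A_n)<\infty$, and then to absorb the finitely many exceptional indices for a typical $r$ into a constant $D_r$.

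The key calculation is a Fubini swap. Writing $\mathbf{1}_{[r-\lambda^n,r+\lambda^n]}(x)=\mathbf{1}_{[x-\lambda^n,x+\lambda^n]}(r)$, I get
\[
\int_0^R \mu\bigl([r-\lambda^n,r+\lambda^n]\bigr)\,dr
\;=\; \int\! \Leb\bigl(\{r\in(0,R):|r-x|\le \lambda^n\}\bigr)\,d\mu(x)
\;\le\; 2\lambda^n,
\]
since $\mu$ is a probability measure. Combining this with the definition of $A_n$ gives
\[
(\lambda')^n \cdot \Leb(A_n) \;\le\; \int_{A_n} \mu\bigl([r-\lambda^n,r+\lambda^n]\bigr)\,dr \;\le\; 2\lambda^n,
\]
so $\Leb(A_n)\le 2(\lambda/\lambda')^n$. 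Because $\lambda<\lambda'$, this is summable.

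By Borel--Cantelli applied to Lebesgue measure on $(0,R)$, there is a full Lebesgue subset $I^*\subset(0,R)$ such that every $r\in I^*$ belongs to at most finitely many $A_n$; equivalently, there is an integer $N_r$ with $\mu([r-\lambda^n,r+\lambda^n])\le (\lambda')^n$ for all $n\ge N_r$. For the finite initial range $0\le n<N_r$, the trivial bound $\mu([r-\lambda^n,r+\lambda^n])\le 1$ gives $\mu([r-\lambda^n,r+\lambda^n])\le (\lambda')^{-N_r}(\lambda')^n$. Choosing $D_r$ to be any number at least $(\lambda')^{-N_r}$ (and at least $1$, so that the form $(D_r\lambda')^n$ also dominates it for every $n$), I obtain the asserted inequality for all $n\ge 0$, completing the proof.
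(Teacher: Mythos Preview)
Your proof is correct and follows the same overall architecture as the paper's: define the bad sets $A_n=\{r:\mu([r-\lambda^n,r+\lambda^n])>(\lambda')^n\}$, show $\sum_n \Leb(A_n)<\infty$, apply Borel--Cantelli, and absorb the finitely many bad indices into $D_r$. The one genuine difference is in how the bound $\Leb(A_n)\le C(\lambda/\lambda')^n$ is obtained. The paper uses a covering argument: it covers $A_n$ (their $J_n$) by intervals of length $2\lambda^n$ centered at points of $A_n$ with multiplicity at most two, and since each such interval has $\mu$-mass at least $(\lambda')^n$, there can be at most $2(\lambda')^{-n}$ of them, giving $\Leb(A_n)\le 4(\lambda/\lambda')^n$. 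You instead integrate $r\mapsto \mu([r-\lambda^n,r+\lambda^n])$ over $(0,R)$, swap the order of integration via Fubini to get the bound $2\lambda^n$, and then apply Markov's inequality. Your route is slightly slicker and avoids the combinatorial covering step; the paper's route is closer in spirit to a Vitali-type argument. Either way the constants are comparable and the conclusion is identical.
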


\begin{proof}[Proof of Lemma~\ref{l.interval}:]

For each $n$, denote by
$$J_n=\{r\in [0,R]; \mu([r-\lambda^n,r+\lambda^n])\geq (\lambda^\prime)^n\}.$$
Now consider a covering of $J_n$ by $\{(r_i-\lambda^n,r_i+\lambda^n); r_i\in J_n\}_{i=1,\dots,t}$ such
that every point of interval is covered by at most twice. By the definition of $J_n$, we have
$$2\geq  \sum_{i=1}^t \mu ((r_i-\lambda^n,r_i+\lambda^n))\geq t (\lambda^{\prime})^n.$$
Hence, $t\leq \frac{2}{(\lambda^{\prime})^n}$. Then
$$\Leb(J_n)\leq \sum_{i=1}^t \Leb((r_i-\lambda^n,r_i+\lambda^n)=t2\lambda^n \leq 4\frac{\lambda^n}{(\lambda^{\prime})^n},$$
which implies that $\sum \Leb(J_n)<\infty$.

Then for Lebesgue almost every $r\in (0,R)$, there is $n_r$ such that $r\notin J_n$
for any $n\geq n_r$. We can choose $D_r>1$ such that $\mu((r-\lambda^i,r+\lambda^i))<(D_r \lambda^{\prime})^i$
for $i=1,\dots, n_r$. The proof is complete.
\end{proof}

Let us continue the proof. Denote $\mu_0=\mu$ and $K=\sum \frac{1}{n^2}$, we consider a new probability measure $\nu=\frac{1}{K}(\sum \frac{1}{(n+1)^2}\nu_n)$ of $M$. For every $x\in M$, denote the Borel measure
$\mu_{x}$ on $[0,R]$ by
$$\mu_{x}((a,b))=\nu (\{y: a \leq d(x,y)\leq b\}) \text{ where $(a,b)\subset [0,R]$}.$$
If $\mu_{x}([0,R])=0$, we take $r_x=\frac{R}{2}$. Otherwise, applying Lemma~\ref{l.interval},
we may choose $\frac{R}{2}<r_{x}<R$ and $D_x$ such that
$$\mu_x(\{y: r_{x}-\lambda^n\leq d(x,y)<r_x+\lambda^n\})\leq D_x (\lambda^{\prime})^n,$$
which is equivalent to say that:
\begin{equation}
\nu(\{y: r_{x}-\lambda^n\leq d(x,y)<r_x+\lambda^n\})\leq D_x (\lambda^{\prime})^n.
\end{equation}
We can take finitely many points $\{x_1,\dots, x_t\}$ such that $B_{r_{x_i}}(x_i)$ covers $M$, and denote
by $D=\max_{\leq i\leq t}{D_{x_i}}$.
Then for the partition $\cA=\vee_{i=1}^t\{B_{r_{x_i}}(x_i), (B_{r_{x_i}}(x_i))^c\}$ and any $i\geq 0$, we have
$\diam(\cA)<R$ and
$$\nu(B_{\lambda^i}(\partial\cA))\leq \sum_{j=1}^t \nu(B_{\lambda^i}(\partial (B_{r_{x_j}}(x_j))))\leq tD(\lambda^\prime)^i.$$

Hence, for each $n\geq 0$, $\nu_n(B_{\lambda^i}(\partial\cA))\leq K(1+n)^2 \nu(B_{\lambda^i}(\partial\cA))
\leq K(1+n)^2tD(\lambda^{\prime})^i$. We conclude the proof by taking $C_n=K(1+n)^2tD$.
\end{proof}

\end{document}